\documentclass{amsart}

\usepackage{ifluatex}
\ifluatex
  \usepackage{fontspec}
  \usepackage{luatextra}
  \defaultfontfeatures{Numbers=OldStyle}
  \setromanfont{Minion Pro}
  \setsansfont[Scale=MatchLowercase]{Myriad Pro}
  \setmonofont[Scale=MatchLowercase]{DejaVu Sans Mono}
  \usepackage{unicode-math}
  \setmathfont{Asana-Math.otf}
  \unimathsetup{math-style=ISO}
\else
  \usepackage[utf8]{inputenc}
  \usepackage[T1]{fontenc}
  \usepackage{amssymb}
\fi
\usepackage{fullpage}
\usepackage[x11names,svgnames,dvipsnames]{xcolor}
\usepackage{hyperref,bookmark}
\makeatletter
\g@addto@macro{\UrlBreaks}{\UrlOrds}
\makeatother

\def\biblatexbackend{bibtex}
\ifluatex
\def\biblatexbackend{biber}
\fi
\usepackage[%
style=alphabetic,hyperref=true,backref=true,sorting=nyt,
block=space,firstinits=true,maxcitenames=3,mincitenames=3,maxalphanames=3,minalphanames=3,maxbibnames=99,
backend=\biblatexbackend
]{biblatex}
\bibliography{biblio}

\let\oldmathcal\mathcal
\usepackage{mathtools,amscd,amsthm}
\usepackage{mathrsfs,calrsfs,dsfont,mathbbol}
\usepackage[all]{xy}
\usepackage{lipsum}
\usepackage[framemethod=TikZ]{mdframed}
\usepackage[capitalise]{cleveref}
\usepackage{csquotes}
\usepackage{relsize}

\definecolor{webbrown}{rgb}{.6,0,0}

\providecommand\linkcolor{MidnightBlue}
\providecommand\urlcolor{webbrown}
\providecommand\citecolor{green}
\providecommand\anchorcolor{red}
\hypersetup{
  colorlinks,linkcolor=\linkcolor,anchorcolor=\anchorcolor,citecolor=\citecolor,
  urlcolor=\urlcolor,
  breaklinks,pdfborder={0 0 0},linktoc=page,
  pdfhighlight=/O, pdfpagemode=UseOutlines,
  unicode=true
}
\bookmarksetup{numbered=true,open=true,openlevel=1,view={FitH 0}}
\newtagform{color}{(\colorlet{default}{.}\color{\linkcolor}}{\color{default})}
\usetagform{color}

\newtheorem{theorem}{Theorem}[section]
\newtheorem{lemma}[theorem]{Lemma}

\newtheorem{corollary}[theorem]{Corollary}
\newtheorem{proposition}[theorem]{Proposition}

\theoremstyle{definition}
\newtheorem{definition}[theorem]{Definition}
\theoremstyle{remark}
\newtheorem{remark}[theorem]{Remark}
\newtheorem{example}[theorem]{Example}

\usepackage{algorithm,algpseudocode}

\algnewcommand{\IIf}[1]{\State\algorithmicif\ #1\ \algorithmicthen}
\algnewcommand{\EndIIf}{\unskip\ \algorithmicend\ \algorithmicif}

\usetikzlibrary{arrows,chains,matrix,positioning,scopes}%

\DeclareRobustCommand\myparagraph[1]{\paragraph{\textbf{#1}}}

\DeclareMathOperator{\Mat}{Mat}
\DeclareMathOperator{\Sp}{Sp}
\DeclareMathOperator{\Spec}{Spec}

\DeclareMathOperator{\Jac}{Jac}
\DeclareMathOperator{\lcm}{lcm}
\DeclareMathOperator{\GL}{GL}

\DeclareMathOperator{\Hom}{Hom}
\DeclareMathOperator{\Frac}{Frac}
\DeclareMathOperator{\diag}{diag}

\DeclareMathOperator{\Coker}{Coker}
\DeclareMathOperator{\Ab}{Ab}

\DeclareMathOperator{\Gram}{Gram}

\DeclareMathOperator{\SL}{SL}
\DeclareMathOperator{\car}{char}
\DeclareMathOperator{\Tr}{Tr}
\DeclareMathOperator{\Nr}{Nr}
\DeclareMathOperator{\Aut}{Aut}
\DeclareMathOperator{\gen}{gen}
\DeclareMathOperator{\cls}{cls}
\DeclareMathOperator{\GU}{U}
\DeclareMathOperator{\SU}{SU}
\DeclareMathOperator{\Mass}{Mass}

\newcommand{\defby}{\coloneqq}
\newcommand{\trsp}[1]{{}^{t}\!#1}
\newcommand{\OO}{\mathcal{O}}

\newcommand{\fp}{\mathfrak{p}}

\newcommand{\fa}{\mathfrak{a}}
\newcommand{\fb}{\mathfrak{b}}

\newcommand{\QQ}{\mathds{Q}}
\newcommand{\Gm}{\mathds{G}_m}
\newcommand{\FF}{\mathds{F}}
\newcommand{\Fq}{\FF_q}

\newcommand{\HHb}{\mathds{H}}
\newcommand{\ZZ}{\mathds{Z}}
\newcommand{\ZS}{\ZZ^{*2}}
\newcommand{\RR}{\mathds{R}}
\newcommand{\PP}{\mathds{P}}
\newcommand{\Primes}{\mathcal{P}}
\newcommand{\RLat}{R-\text{Mod}_{\text{f.p}}}
\newcommand{\NN}{\mathds{N}}
\newcommand{\CC}{\mathds{C}}

\newcommand{\iso}{\ensuremath{\simeq}}      %

\newcommand{\transp}[1]{\prescript{t}{}{#1}}

\newcommand{\F}{\mathcal{F}_E}

\newcommand{\LL}{\mathcal{L}}
\newcommand{\End}{\text{End}}
\newcommand{\Id}{\text{Id}}
\newcommand{\Ld}{L^{\#}}
\newcommand{\s}{^{*}}

\newcommand{\Lcal}{\mathcal{L}}
\newcommand{\pol}{\Lcal}

\let\Mpol=\Mcal
\newcommand{\bpol}{\mathscr{M}}
\newcommand{\carac}[2]{%
    \left[\begin{smallmatrix}%
    #1 \\
    #2
    \end{smallmatrix}\right]
}
\newcommand{\thetacarac}[2]{\vartheta\carac{#1}{#2}}
\newcommand{\thetacaraco}[2]{\theta\carac{#1}{#2}}
\let\thetacar\thetacarac

\newcommand{\Zstruct}[1]{\ensuremath{Z({#1})}}
\newcommand{\dZstruct}[1]{\hat{Z}(#1)}

\newcommand{\overln}{\ensuremath{\overline{\ell n}}}
\newcommand{\overn}{\ensuremath{\overline{n}}}
\newcommand{\overl}{\ensuremath{\overline{\ell}}}

\newcommand{\Zln}{\Zstruct{\overln}}
\newcommand{\Zn}{\Zstruct{\overn}}
\newcommand{\Zl}{\Zstruct{\overl}}

\newcommand{\dZn}{\dZstruct{\overn}}
\newcommand{\dZl}{\dZstruct{\overl}}

\newcommand{\overtwo}{\ensuremath{\overline{2}}}
\newcommand{\Ztwo}{\Zstruct{\overtwo}}
\newcommand{\dZtwo}{\dZstruct{\overtwo}}
\newcommand{\Zfour}{\Zstruct{\overline{4}}}
\newcommand{\Hstruct}[1]{\ensuremath{\oldmathcal{H}(#1)}}
\newcommand{\Hn}{\Hstruct{\overn}}
\newcommand{\Hln}{\Hstruct{\overln}}
\newcommand{\Agln}{\mathcal{A}_{g,\ell n}}
\newcommand{\Agn}{\mathcal{A}_{g,n}}
\newcommand{\Abargn}{\mathcal{\overline{A}}_{g,n}}
\newcommand{\Abargln}{\mathcal{\overline{A}}_{g,\ell n}}
\newcommand{\Ag}{\mathcal{A}_{g,1}}
\newcommand{\Agthree}{\mathcal{A}_{3,1}}
\newcommand{\Agfour}{\mathcal{A}_{4,1}}
\newcommand{\Xgn}{\mathcal{X}_{g,n}}
\newcommand{\Xbargn}{\mathcal{\overline{X}}_{g,n}}
\newcommand{\Xgln}{\mathcal{X}_{g,\ell n}}
\newcommand{\Xbargln}{\mathcal{\overline{X}}_{g,\ell n}}
\newcommand{\Agrln}{\mathcal{A}_{rg,\ell n}}
\newcommand{\Xgrln}{\mathcal{X}_{rg,\ell n}}
\newcommand{\Xgrn}{\mathcal{X}_{rg,n}}
\newcommand{\Agrn}{\mathcal{A}_{rg,n}}
\newcommand{\Hodge}{\oldmathcal{H}}

\newcommand{\kbar}{\overline{k}}
\newcommand{\pibar}{\overline{\pi}}
\newcommand{\Pibar}{\overline{\Pi}}
\newcommand{\Tbar}{\overline{T}}
\newcommand{\Tmod}{\mathcal{T}_{g,n,\ell}}
\newcommand{\ftilde}{\tilde{f}}

\newcommand{\Wedge}{\wedge}

\makeatletter
\newskip\@bigflushglue \@bigflushglue = -100pt plus 1fil

\def\bigcentering{\let\\\@centercr\rightskip\@bigflushglue%
\leftskip\@bigflushglue
\parindent\z@\parfillskip\z@skip}

\makeatother

\tikzset{
  joinedge/.code 2 args=\tikzset{after node path={%
  \ifx\tikzchainprevious\pgfutil@empty\else(\tikzchainprevious)%
  edge[#1]#2(\tikzchaincurrent)\fi}},
  joinnode/.code=\tikzset{after node path={%
  \ifx\tikzchainprevious\pgfutil@empty\else(\tikzchainprevious)%
  edge[every join]#1(\tikzchaincurrent)\fi}},
  joinlabel/.code=\tikzset{joinnode={node [above] {#1}}},
  joinbijlabel/.code=\tikzset{joinnode={node [isbij,above] {#1}}},
  joinbij/.code=\tikzset{joinnode={node [bij] {$\sim$}}},
  isbij/.style={after node path={node [bij] {$\sim$}}},
  chainjoin/.style={every on chain/.append style={join}},
  nodeonchain/.style={every node/.append style=on chain},
  mysmallchain/.style={start chain,node distance=2em, every join/.style={myarrow}},
  mychain/.style={mysmallchain, node distance=3em },
  equal/.style={-,double distance=2pt},
  myarrow/.style={->,shorten >=1pt,shorten <=1pt, >=stealth'},
  bij/.style={anchor=base,sloped,inner sep=0.2pt},
  description/.style={fill=white,inner sep=2pt},
  mydiag0/.style={myarrow, text height=1.5ex, text depth=0.25ex},
  mydiag/.style={auto,mydiag0},
  mymatrix0/.style={matrix of nodes, nodes in empty cells},
  mymatrix/.style={row sep=3em, column sep=3em, mymatrix0},
  mycompressedmatrix/.style={row sep=3em, column sep=2em, mymatrix0},
  mysmallmatrix/.style={row sep=1em, column sep=1em, mymatrix0},
  mathmatrix/.style={matrix of math nodes, nodes in empty cells},
  beamermatrix0/.style={mymatrix0,ampersand replacement=\&},
  beamermatrix/.style={beamermatrix0,row sep=1cm, column sep=1.5cm},
  beamersmallmatrix/.style={beamermatrix0, row sep=1em, column sep=1em},
  beamermathmatrix/.style={nodes in empty cells, matrix of math nodes, ampersand replacement=\&,row sep=1.0cm,column sep=1.5cm},
  information text/.style={rounded corners,fill=blue!10,inner sep=2ex},
}

\RequirePackage{twoopt}
\newcommandtwoopt{\exact}[5][][]{%
\begin{tikzpicture}[mydiag]
    {[mychain]
    \node[on chain,join] {$0$} ;
    \node[on chain,join] {$#3$} ;
    \node[on chain,joinlabel={$#1$}] {$#4$};
    \node[on chain,joinlabel={$#2$}] {$#5$};
    \node[on chain,join] {$0$};
    }
\end{tikzpicture}}%
\newcommandtwoopt{\exactfull}[7][][]{%
\begin{tikzpicture}[mydiag]
    {[mychain]
    \node[on chain,join] {$#3$} ;
    \node[on chain,join] {$#4$} ;
    \node[on chain,joinlabel={$#1$}] {$#5$};
    \node[on chain,joinlabel={$#2$}] {$#6$};
    \node[on chain,join] {$#7$};
    }
\end{tikzpicture}}%

\makeatletter
\def\fonction{\@ifnextchar[{\@fonctions}{\@fonction}}

\def\@fonction#1{\@ifnextchar[{\@fonctionb{#1}}{\@fonctionh{#1}}}
\def\@fonctionh#1#2#3{\@ifnextchar[{\@fonctionhb{#1}{#2}{#3}}{\@fonctionhh{#1}{#2}{#3}}}
\def\sidispl@y#1#2{\mathchoice{#1}{#2}{#2}{#2}}
\def\@fonctionhb#1#2#3[#4][#5]{\sidispl@y{\begin{array}[t]{r@{\ }rcl}
        \displaystyle #1\colon  & \displaystyle #2 & \longrightarrow &
\displaystyle #3 \\
                & \displaystyle #4 & \longmapsto & \displaystyle #5
\end{array}}{#1:\ #2\to #3,\ #4\mapsto #5}}
\def\@fonctionhh#1#2#3{#1\colon#2\to #3}
\def\@fonctionb#1[#2][#3]{#1\colon#2\mapsto #3}
\def\@fonctions[#1]{\@ifnextchar[{\@fonctionsb}{\@fonctionsh}}
\def\@fonctionsh#1#2{\@ifnextchar[{\@fonctionshb{#1}{#2}}{\@fonctionshh{#1}{#2}}
}
\def\@fonctionshb#1#2[#3][#4]{\sidispl@y{\begin{array}[t]{rcl}
        \displaystyle #1 & \longrightarrow & \displaystyle #2 \\
        \displaystyle #3 & \longmapsto & \displaystyle #4
\end{array}}{#1\to #2,\ #3\mapsto #4}}
\def\@fonctionshh#1#2{#1\to #2}
\def\@fonctionsb[#1][#2]{#1\mapsto #2}
\makeatother

\begin{document}%

\title{Spanning the isogeny class of a power of an elliptic curve}

\author[Kirschmer]{Markus Kirschmer}
\address{Universit\"at Paderborn, Fakult\"at EIM, Institut f\"ur Mathematik, Warburger Str. 100, 33098 Paderborn, Germany}
\email{markus.kirschmer@math.upb.de}

\author[Narbonne]{Fabien Narbonne}
\address{Univ Rennes, CNRS, IRMAR - UMR 6625, F-35000
 Rennes, %
  France. %
}
\email{fabien.narbonne@univ-rennes1.fr}

\author[Ritzenthaler]{Christophe Ritzenthaler}
\address{Univ Rennes, CNRS, IRMAR - UMR 6625, F-35000
 Rennes, %
  France. }
\email{christophe.ritzenthaler@univ-rennes1.fr}

\author[Robert]{Damien Robert}
\address{
INRIA Bordeaux–Sud-Ouest,
200 avenue de la Vieille Tour,
33405 Talence Cedex, France and
Institut de Math\'ematiques de Bordeaux,
351 cours de la liberation,
33405 Talence cedex, France}
\email{damien.robert@inria.fr}

\subjclass[2010]{14H42,14G15, 14H45, 16H20}

\keywords{hermitian lattice, order in quadratic field, isogeny class, polarization, curves with many points over finite fields, Siegel modular form, theta constant, theta null point, algorithm, Igusa modular form, Serre's obstruction, Schottky locus}

\date{April 2020}

\begin{abstract}
\noindent
 Let $E$ be an ordinary elliptic curve over a finite field and $g$ be a positive integer. Under some technical assumptions, we give an algorithm to span the  isomorphism classes of principally polarized abelian varieties in the isogeny class of $E^g$. The varieties are first described as hermitian lattices over (not necessarily maximal) quadratic orders and then geometrically in terms of their algebraic theta null point. We also show how to  algebraically compute Siegel modular forms of even weight given as polynomials in the theta constants by a careful choice of an affine lift of the theta null point. We then use these results to give an algebraic computation of Serre's obstruction for principally polarized abelian threefolds isogenous to $E^3$ and of the Igusa modular form in dimension $4$. We illustrate our algorithms with examples of curves with many rational points over finite fields.

\end{abstract}

\maketitle

\section{Introduction}
Let $g,m \geq 1$ be  integers, $p$ be a prime, $q=p^m$ and $\mathcal{W}$ be the isogeny class of  a given dimension-$g$ abelian variety $A$ over $\FF_q$. The elements of $\mathcal{W}$ will be the $\FF_q$-isomorphism classes of abelian varieties over $\FF_q$ which are $\FF_q$-isogenous to $A$. Thanks to the work of Tate \cite{tate} and Honda \cite{honda}, one knows that  
the Weil polynomial $W$ is an invariant on $\mathcal{W}$. One can also characterize the finite list $S(q,g)$ of possible Weil polynomials for given $q$ and $g$. These finite lists have  been made explicit up to genus $5$ \cite{haloui1,haloui2,hayashida}.  Representing now an isogeny class $\mathcal{W}$ by a polynomial $W \in S(q,g)$, a harder task is to describe the finite set of elements (i.e. $\FF_q$-isomorphism classes of abelian varieties)  inside $\mathcal{W}$. Currently, there is no unified nor complete way to achieve this task. To our best knowledge, one can get a full abstract description
\begin{enumerate}
\item  for $g=1$ \cite{Wat};
\item for ordinary abelian varieties \cite{deligne, Ser, howe95, Mar, JKP+18};
\item for abelian varieties $A \sim E^g$ where $E$ is a supersingular elliptic curve either over $\FF_p$ or over $\FF_{p^2}$ with trace $\pm 2 p$; \cite{JKP+18};
\item when $q=p$ and $W$ has no real root \cite{centeleghe};
\item for $p$-rank $g-1$ simple abelian varieties over fields of odd characteristics \cite{oswal}.
\end{enumerate}
Roughly speaking, the above descriptions functorially  relate $\FF_q$-isomorphism classes of (non-polarized) abelian varieties in $\mathcal{W}$ and certain finitely generated modules over orders in products of number fields or quaternion algebras.  
Notice that even for $g=2$, the situation is still incomplete as far as we know: there are only partial results for supersingular and superspecial abelian surfaces \cite{ibukiyama,chiafu, nart} and $p$-rank 1 split isogeny classes seem untouched.

The situation is even more critical if one is interested in $\FF_q$-isomorphism classes of \emph{polarized} abelian varieties in $\mathcal{W}$. Since the distinction is
important for one of our goal (identifying Jacobians in the isogeny class),
we denote the $\FF_q$-isomorphism classes of principally polarized abelian varieties isogenous to  $A$
by $\mathcal{W}_1$. Notice that there is  no inclusion between the elements of $\mathcal{W}$ and $\mathcal{W}_1$ since the notions of isomorphism classes are distinct.
When  the abelian varieties in $\mathcal{W}$ are isogenous to products of non-isogenous ordinary simple abelian varieties, there are algorithms  to enumerate the elements of $\mathcal{W}$ or  $\mathcal{W}_1$  (see \cite{Mar}). The LMFDB database is currently keeping  track of the cardinality of these sets for small values of $g$ and $q$ \cite{LMFDB}.\\

In the present article, we consider a different case from \cite{Mar}, namely $\mathcal{W}$ is the isogeny class of the $g$-th power of an ordinary elliptic curve $E/\FF_q$. Let $\pi$ be the Frobenius endomorphism of $E$ and $R=\ZZ[\pi,q/\pi]=\ZZ[\pi]$.
The set $S_E$ of $\FF_q$-isomorphism classes of elliptic curves $\{E_1,\ldots,E_r\}$ isogenous to $E$ is in bijection with the ideal class monoid $\textrm{ICM}(R)$ of $R$.  Moreover, it is always possible to identify in this set  or directly construct one elliptic curve isogenous to $E$ with minimal endomorphism ring, i.e. equal to $R$ (see the discussion at the beginning of Section~\ref{sec:effectivekernel}).
We will assume from now on that this is our curve $E$. The functor given in \cite{JKP+18} which associates to any $A \in \mathcal{W}$ the finitely generated torsion-free $R$-module (or in short $R$-lattice) $\Hom(A,E)$ of rank $g$ is an equivalence of categories and provides an inverse denoted $\F$. Note that this functor is distinct from the one used for instance in \cite{Mar} (it is contravariant and exact) and there is no easy way to compare them away from projective $R$-modules. But both functors lead to the conclusion that the elements in $\mathcal{W}$ are represented by products of elliptic curves $E_1,\ldots,E_g$ in $S_E$ corresponding to a sequence of orders $R \subset \End(E_1) \subset \ldots \subset \End(E_g)$ and  invertible $\End(E_i)$-ideal classes $I_i$ with a given fixed product $I_1 \cdots I_g$ in $\textrm{ICM}(R)$  (see \cite[Th.1]{kani}, \cite{Mar}, \cite[Th.3.2]{JKP+18}).
 
Since we are interested in $\FF_q$-isomorphisms classes of polarized abelian varieties, we need to translate the notion of polarization in the category of $R$-lattices through the functor $\Hom(A,E)$. We show in Theorem~\ref{th:equivalencepol} and Corollary~\ref{cor:equivalencepol} that this can indeed be done: the elements in $\mathcal{W}_1$ are in correspondence with the unimodular positive definite hermitian $R$-lattice $(L,h)$ of rank $g$ (see Section~\ref{subsection1} for a review on these notions for lattices). This result is no surprise to the specialists as it generalizes  a similar result of Serre \cite[Appendix]{Lau} when $R$ is the maximal order in $\QQ(\pi)$ and is analogue of the result of \cite{howe95,Mar} using a different functor. \\

How to enumerate the lattices $(L,h)$? This is part of a broader and beautiful theory which has been developed for general orders in number fields or quaternion algebras. However, even in the case of imaginary quadratic orders, the algorithms have been mainly implemented in the case where $R$ is a maximal order, cf. \cite{Sch,Det}. In Section~\ref{sec:class}, we recall some elements of this theory restricted to imaginary quadratic orders and show how to adapt these algorithms when $R$ is not maximal. This generalization comes at the price of much slower algorithms which can be sped up if one restricts to lattices which are projective $R$-modules (or equivalently to abelian varieties which are products of elliptic curves with endomorphism rings isomorphic to $R$). While our method for enumerating projective $R$-modules is quite efficient,  we believe that there is still lot of room for improvements in the general case.\\

Such descriptions, though powerful, do not allow to get a real grasp on a given polarized variety $(A,\LL)$. In particular, given an abstract description of an element in $\mathcal{W}_1$, one would like for instance to see if it is the Jacobian of a curve and if so, to give an equation  of the curve. For this, we have to jump back to the algebraic geometry side and associate to the abstract description some data describing the embedding $\phi_{\LL^i}$, $i \geq 3$, of $A$ into a projective space $\PP^N$. When $p \ne 2$, Mumford showed how to extend the classical theory over $\CC$ by  using an algebraic version of the theta constants, called a \emph{theta null point}. These constants are projectively the image by $\phi_{\LL^i}$ of $0 \in A$ for a careful choice of basis of $\PP^N$. However, if this data is not available before hand for at least one principally polarized abelian variety in $\mathcal{W}_1$, the only known method to compute it is to work with a lift of $A$ and its polarization to $\CC$, perform analytic computations with enough precision, hopefully recognize algebraic numbers and eventually reduce the result over the finite field.
When $A$ is simple, this is the classical setting of the Complex Multiplication methods (see for instance \cite[Chap.18]{handbook}) but the output is heuristic when $g > 2$ \cite{sutherlandCM,streng}.

In our case, we will  take advantage that it is easy to compute the theta null point on $A_0=E^g \in \mathcal{W}_1$  with the product polarization $\LL_0$. It boils down to computing the (projective) thetanull point on $E$. The formula for their fourth power is a particular case of Thomae's formula. We will give an elementary proof of this result and show that one can take arbitrary fourth roots (see Lemma~\ref{lem:fourth} and Corollary~\ref{cor:algebraicthomae}). Doing so, we will also prepare for a `modular version' of the thetanull point that we will need later and take great care of the constant involved.

 We also show how to deduce from the lattice description $(L,h)$ of $(A,\LL) \in \mathcal{W}_1$ an isogeny $f :  A_0 \to A$ such that $f^* \LL = \LL_0^{\ell}$ for a certain $\ell \geq 1$. This is achieved by looking for $g$ orthogonal vectors of norm $\ell$ in $\Ld$ (a certain dual of $L$ for $h$), see Section~\ref{sec:ortho}.  We can then give $f$ through an explicit maximal isotropic kernel $K$ in $A_0[\ell]$, see Section~\ref{sec:effectivekernel}. The explicit \emph{isogeny formula} developed in \cite{DRniveau} allows then to transport the thetanull point on $(A_0,\LL_0)$ to the one on $(A,\LL)$.  This leads to the following overview of our algorithm.

\begin{algorithm}[H]
\begin{algorithmic}[1]
  \Require{An integer $g >1$ and the Weil polynomial $W$ of an ordinary elliptic curve over $\FF_q$ (with some technical restrictions, see the discussion below).}
 \Ensure{The theta null points of all indecomposable  principally polarized abelian varieties with Weil polynomial $W^g$.}
\State{Let $R=\ZZ[x]/(W)$ and compute an elliptic curve $E/\FF_q$ such that $\End(E)= \ZZ[\pi] \simeq R $ (see Section~\ref{sec:effectivekernel}).}
\State{Use  Algorithm~\ref{UnimodR} (resp. \ref{algo:proj}) to get a list of all (resp. all projective) indecomposable unimodular positive definite hermitian $R$-lattices $(L,h)$ up to isometry.}
\State{Apply Algorithm~\ref{algo:kerneliso} to compute a maximal isotropic kernel $K$ of an isogeny $f : E^g \to \F(L)$ for each $(L,h)$.}
\State{{\bf return} the output of Algorithm~\ref{algo:thetaquotient} on each $((E)_{i=1,\ldots,g},K)$.}
\end{algorithmic}
 \caption{Overview of the full algorithm} \label{algo:full}
\end{algorithm}

In practice, there are restrictions on the $W$ for which this algorithm is going to work. Indeed, the current implementation of the isogeny formula imposes several constraints on the kernel $K$ of $f$. We list them below, starting from what would require the most work if one intends to remove it. This should be taken with a grain of salt as it is of course impossible to predict possible obstacles without an actual study. 
\begin{enumerate}
\item The algorithm imposes $p$ to be odd since it uses theta structures of even level;
\item  The algorithm imposes to look for  $f$ such that $f^* \LL=\LL_0^{\ell}$ for an integer $\ell>0$, whereas the strategy would work with $f^* \LL$ any completely decomposable polarization. Because of this, $f$ does not always exist (see Example~\ref{ex:noell}). We give necessary and sufficient conditions for its existence in Theorem~\ref{ellid} (for instance, it does always exist is $g$ is odd);
\item  The algorithm imposes $\ell$ to be  coprime to $2p$, see Remark~\ref{rem:details}. We work out in Section~\ref{sec:ortho} a thorough local analysis of the lattices which gives a refinement of Theorem~\ref{ellid}. For instance, when $g$ is odd it is sufficient that the conductor of $R$ is odd to find such an $\ell$;
\item Even when $\ell$ is coprime to $2p$, we have to discard cases when the kernel $K$ from Algorithm~\ref{algo:full} is not isomorphic as an abstract group to $(\ZZ/\ell\ZZ)^g$. Notice that when $\ell$ is square free, $K$ is necessarily isomorphic to  $(\ZZ/\ell\ZZ)^g$ so the algorithm always works. We did not try to get a proof of the existence of such a good $\ell$ and we pragmatically chose to test the group structure of a given kernel $K$ until we get the required abstract group isomorphism.
\end{enumerate}
The full cost of the algorithm is hard to estimate: it  heavily depends on the smallest good $\ell$ one can find (when it exists) and it is an open question to find an upper bound  in terms of $R$ and $g$ for the maximum of the minimal $\ell$ for a given $\mathcal{W}_1$.  Once $\ell$ is given, a lower bound for the complexity is given by the one of Algorithm~\ref{algo:thetaquotient} which is $O(\ell^g)$. Be aware that this hides a large constant, since the computations have to be performed on the extension of $\FF_q$ where all $\ell$-torsion points of $E$ are defined. Typically, the algorithm works for a given element of $\mathcal{W}_1$ in reasonable time when $\ell$ is smaller than $41$ (resp. $19$, resp. $7$) for $g=2$ (resp. $3$, resp. $4$). Then the full cost depends also on the cardinality of $\mathcal{W}_1$ which can be computed by  \cite{HaKo2} for $g=2$ and $3$. When $R=\End(E)$ is maximal, a lower bound for this cardinality grows linearly in $(\textrm{disc}(R))^{g^2/4}$ for fixed $g$.

 The restrictions above artificially increase the smallest $\ell$ we would like to consider. We therefore urge the reader to consider Algorithm~\ref{algo:full} as a proof of concept, allowing computations which were completely out of reach before for 
various classes $\mathcal{W}_1$  in dimension $2,3$ and $4$ with $R$ maximal or not (see Section~\ref{sec:ex}). \\

We finally move to one last new algorithmic result. In Section~\ref{sec:modular}, we show how to evaluate a Siegel modular form $\chi$ of level $\Sp_{2g}(\ZZ)$ and even weight\footnote{when $g$ is odd, all of them have even weight.} at a principally polarized abelian variety $(A,\pol)/\FF_q$ when $\chi$ is defined as a homogeneous polynomial $P$ in the theta constants with coefficients in $\FF_q$. A Siegel modular form is a section of a power of the Hodge bundle on the universal abelian variety, so to give it a value only makes sense once a $\FF_q$-rational basis of regular differentials on $A$ is fixed. We show that choosing such a basis yields a particular affine lift of the theta null point on $(A,\pol)$ which we call a \emph{modular lift} (see Definition~\ref{def:modularlift}). The coordinates of a modular lift are characterized, up to a common sign,  by considering all products of two theta coordinates as Siegel modular forms of weight $1$.
Evaluating $\chi$ is then computing the value of $P$ in the coordinates of the modular lift.  We show that a certain affine version of the isogeny formula preserves the modular lift property (see Theorem~\ref{th:modular}). Since in our Thomae's formula for elliptic curves we took care of having such a modular lift, we can therefore carry it to $(A,\pol)$ through the isogeny (see Algorithm~\ref{algo:modular}) and perform the computation of the modular form on ($A,\pol)$.\\
 
As an application and in order to illustrate our algorithms, we consider curves over $\FF_q$ with many points. A curve $C$ of genus $g \geq 1$ over $\FF_q$ has at most $1+q+g \lfloor 2 \sqrt{q} \rfloor$ and when this bound is reached, we say that $C$ is a \emph{defect-$0$ curve}. The best upper bounds are known only for $g \leq 2$ and sparse families of $g,q$. If $C$ is a defect-$0$ curve, then its Jacobian $\Jac C$ is isogenous to $E^g$ where $E$ has trace $-\lfloor 2 \sqrt{q} \rfloor$. If $E$ is ordinary (which is always the case for instance when $q=p^m$ with $m=1$ or $3$ and $q \ne 2,3$ \cite[II.6.4]{Ser}), we can try to find $\Jac C$ among the indecomposable principally polarized abelian varieties $(A,\pol)$ in the isogeny class of $E^g$.  

When $g=2$, each such $(A,\pol)$ is automatically the Jacobian of a defect-$0$ curve. It is therefore enough to know that an indecomposable principally polarized abelian surface isogenous to $E^2$ exists which can already be obtained on the lattice side of the picture using \cite{Hof} and \cite[Th.3.9.1]{Ser}. Now, if one wants an equation of the curve, it can be provided using Algorithm~\ref{algo:full}.

When $g=3$, although each $(A,\pol)$ is geometrically the Jacobian of a unique curve $C/\FF_q$, there may be an obstruction, called \emph{Serre's obstruction}, for $C$ to have defect-$0$.
Fortunately, the modular form $\chi_{18}$ which is a Siegel modular form of weight $18$ defined as the product of the $36$ even theta constants determines this obstruction as we shall recall in Section~\ref{sec:ex}. Since we can compute algebraically the values of $\chi_{18}$ at all $(A,\pol)$ in the isogeny class of $E^3$, we can compute the obstruction for each of them and check if a defect-$0$ genus-3 curve exists over $\FF_q$. This gives the first \emph{provable} computation of this obstruction as, so far, one had only a heuristic method using lifting and approximations over $\CC$ \cite{Rit}.

 We conclude with an example in genus $4$. We first show that Igusa modular form cuts the locus of  Jacobians and decomposable principally polarized abelian varieties over any algebraically closed field of characteristic different $2$ (see Theorem~\ref{th:igusa}). We then use this to show that a certain class of isogeny does not contain Jacobians (see Example~\ref{ex:g4}).\\

The code and examples of our algorithms are available at
\cite{AlgebraicObstructionCode}.
In the future, we hope to improve the overall speed of the algorithm (for instance  by working with $A_0$ products of distinct elliptic curves $E_i$ instead of $E^g$) and waive the technical limitations above. 
Notice that the method presented here may be adapted to other cases: one could replace $E$ ordinary with $E$ supersingular over $\FF_p$ or over $\FF_{p^2}$ with trace $\pm 2p$; one could also replace $E$ by a principally polarized abelian variety $B$  for which a thetanull point is known (with some restrictions, see \cite{amir} and \cite[Sec.8]{JKP+18}).  

\proof[Acknowledgements]
We would like to thank Andrew Sutherland who kindly provided us a fast \texttt{Magma} code to check when an ordinary elliptic curve has minimal endomorphism ring and Jeroen Sijsling for helping us using his \texttt{Magma} packages. We also thank Valentijn Karemaker and Stefano Marseglia for discussions about the references in the introduction.

\section{Hermitian lattices}
\label{sec:hermitianlattices}

\subsection{Basic definitions and notations}\label{subsection1}
Let $F=\QQ(\sqrt{d})$, where $d<0$ is a squarefree negative integer.
The discriminant $d_F$ of $F$ equals $d$ if $d \equiv 1 \pmod{4}$ and $4d$ otherwise.
The non-trivial Galois involution of $F/\QQ$ will be denoted by $\bar{\cdot}$. 
Further, let 
\[ \Nr \colon F \to \QQ,\; x \mapsto x \overline{x} \quad \mbox{and} \quad \Tr \colon F \to \QQ,\; x \mapsto x + \overline{x} \]
be the usual norm and trace of $F/\QQ$.

\begin{definition}
A \emph{hermitian space} $(V, h)$ over $F$ is a finite dimensional vector space $V$ over $F$ equipped with a sesqui-linear map $h \colon V \times V \to F$ such that
\begin{enumerate}
\item $h(\alpha v + \beta v',w) = \alpha h(v,w) + \beta h(v', w) $ for all $\alpha, \beta \in F$ and all $v,v', w \in V$. 
\item $h(v,w) = \overline{h(w,v)}$ for all $v,w\in V$.
\end{enumerate}
The \emph{rank} of a hermitian space $(V,h)$ is the dimension of $V$ over $F$.
For a tuple $b = (b_1,\dots, b_r) \in V^r$ we define its \emph{Gram matrix} by
\[ \Gram(b) =(h(b_i,b_j)) \in F^{r \times r} \:. \]
\end{definition}

Every hermitian space $(V, h)$ in this paper is assumed to be \emph{non-degenerate}, i.e. if $v \in V$ with $h(v,w) = 0$ for all $w \in V$ then $v = 0$.
This is equivalent to say that the \emph{Gram matrix} of any basis $b$ of $V$ is invertible.

\begin{definition}
Let $b$ be a basis of a hermitian space $(V, h)$. Then
\[ \det(V, h):= \det(\Gram(b))\]
is called the \emph{determinant} of $(V,h)$. It is well defined when viewed as an element of $\QQ^*/\Nr(F^*)$.
\end{definition}

\begin{definition}
Two hermitian spaces $(V,h)$ and $(V', h')$ over $F$ are called \emph{isometric} if there is an isomorphism $\varphi \colon V \to V'$ such that $h'(\varphi(v), \varphi(w)) = h(v,w)$ for all $v,w \in V$.
The map $\varphi$ is then called an \emph{isometry} between $(V,h)$ and $(V', h')$.
Moreover, 
\begin{align*}
 \GU(V,h) &= \{ \varphi \colon V \to V \mid \varphi \mbox{ is an isometry} \} \quad \mbox{and}  \quad
 \SU(V,h) = \{ \varphi \in \GU(V, h) \mid \det(\varphi) = 1 \} \:.
\end{align*}
are the \emph{unitary} and \emph{special unitary groups} of $(V, h)$ respectively.
\end{definition}

Let $\Primes$ denote the set of prime numbers.
For $p \in \Primes \cup \{\infty\}$ let $F_p := \QQ_p \otimes_\QQ F$ be the completion of $F$ at $p$.
Let $(V,h)$ be a hermitian space over $F$.
The map $h$ extends to $V_p:= F_p \otimes_F V$ by linearity. This yields a hermitian space $(V_p, h)$ over $F_p$.
If $p = \infty$, then $\QQ_\infty = \RR$ and $(V_\infty, h)$ is a hermitian space over $F_\infty = \CC$.
The signature of this complex hermitian space is called the signature of $(V, h)$.

The following local-global principle is well known.

\begin{theorem}[Landherr]
Two hermitian spaces over $F$ are isometric if and only if they are isometric over every place of $\QQ$.
\end{theorem}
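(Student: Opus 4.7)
The forward direction is immediate: any global isometry induces an isometry after tensoring with $\QQ_p$. For the converse, my plan is to show that every hermitian space $(V,h)$ over $F$ is determined up to isometry by three invariants, namely its dimension, its signature at infinity, and its determinant $\det(V,h) \in \QQ^*/\Nr(F^*)$, and that each of these invariants is detected by the local data at the places of $\QQ$. Once this is set up, Landherr's theorem follows by matching invariants.

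The first step is to recover the global invariants from the local ones. Dimension and signature are obviously preserved under local isometry. The delicate point is the determinant. Recall that the determinant of a hermitian space over a local field $F_p$ is a complete invariant together with the dimension (when $F_p$ is a field) or is trivial (when $F_p \iso \QQ_p \times \QQ_p$ splits, in which case every class represents a norm). Thus, if $(V_p,h) \iso (V'_p,h')$ for every place $p$, the ratio $\det(V,h)/\det(V',h') \in \QQ^*$ lies in $\Nr(F_p^*)$ at every finite $p$, and lies in $\Nr(\CC^*)=\RR_{>0}$ at infinity since signatures match and $F$ is imaginary quadratic. Applying the Hasse Norm Theorem to the cyclic extension $F/\QQ$ then gives that this ratio lies in $\Nr(F^*)$, so the global determinants agree in $\QQ^*/\Nr(F^*)$.

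The second step is the global classification itself, proved by induction on $n = \dim_F V$. The base case $n=1$ is direct: for $V = F v$ the value $h(v,v) \in \QQ^*$ is constrained only by its class modulo $\Nr(F^*)$ and its sign (which coincide since $F$ is imaginary quadratic and $\Nr(F^*)\subset \QQ_{>0}$), and a scaling $v \mapsto c v$ with $c \in F^*$ multiplies this value by $\Nr(c)$. For the inductive step, given $(V,h)$ and $(V',h')$ with identical invariants, I would produce a common value $c \in \QQ^*$ represented by both, split off $V = \langle c\rangle \perp V_0$ and $V' = \langle c\rangle \perp V'_0$, and invoke Witt cancellation: the orthogonal complements have the same dimension, the same signature (subtracting the contribution of $\langle c\rangle$), and the same determinant (divided by $c$), so they are isometric by induction.

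The main obstacle will be producing the common value $c$ in the inductive step. Locally this is elementary, so one chooses $c \in \QQ^*$ which is represented by $V_p \iso V'_p$ at every place and then appeals to a Hasse-type principle for representation by hermitian forms to lift $c$ to a value globally represented by both $V$ and $V'$; here again the structure of $F/\QQ$ as a cyclic extension and the Hasse Norm Theorem provide the needed input. With that in hand, the classification reduces the two hermitian spaces of the hypothesis to a common isometry class.
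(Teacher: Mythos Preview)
The paper does not prove Landherr's theorem; it is stated as well known and used as a black box. So there is no paper proof to compare against, and your attempt must stand on its own.

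Your overall strategy---recover the global invariants from the local ones via the Hasse Norm Theorem, then show by induction that dimension, signature and determinant classify hermitian spaces over $F$---is a standard route, and Step~1 is correct. The gap is in the inductive step of Step~2.

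There you need a rational $c$ represented by both $(V,h)$ and $(V',h')$. You propose to pick $c$ represented locally everywhere and then invoke ``a Hasse-type principle for representation by hermitian forms,'' saying that ``the Hasse Norm Theorem provides the needed input.'' This is where the argument becomes circular. The statement that $h$ represents $c$ globally as soon as it does so at every place is, via $h \perp \langle -c \rangle$, exactly the local--global principle for \emph{isotropy} of hermitian forms, and that is the substantive content of Landherr's theorem. The Hasse Norm Theorem handles only the rank-one case (since $\langle a \rangle$ represents $c$ iff $c/a \in \Nr(F^*)$); it does not by itself yield the representation principle in higher rank.

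The fix is to supply an independent proof of the isotropy Hasse principle before running your induction: pass from $(V,h)$ to the rational quadratic form $q(x)=h(x,x)$ on the $\QQ$-space underlying $V$, check that $h$ is isotropic over $F$ (resp.\ $F_p$) iff $q$ is isotropic over $\QQ$ (resp.\ $\QQ_p$), and then invoke Hasse--Minkowski for $q$. With that ingredient established, your Witt-cancellation induction goes through and the argument is complete.
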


Hermitian spaces over $\CC$ are parameterized by their signatures while hermitian spaces over $\QQ_p$ are parameterized by their ranks and determinants (viewed as elements of $\QQ_p^*/\Nr(F_p^*)$).
We will only deal with positive definite spaces, i.e. spaces with $h(v,v) > 0$ for all non-zero $v \in V$.
For these spaces, we can make Landherr's theorem more explicit.

\begin{remark}\label{FindSpace}
Let $g$ be a positive integer and let $\Primes_{\textnormal{ns}}$ be the set of primes which do not split in $F$.
\begin{enumerate}
\item
Let $(V,h)$ be a positive definite hermitian space of rank $g$.
Since $\QQ_p^*/\Nr(F_p^*)$ has at most two elements, the isometry type of $(V,h)$ is uniquely determined by 
\[ I:= \{ p \in \Primes \mid \det(V,h) \notin \Nr(F_p^*) \} \subseteq \Primes_{\textnormal{ns}} \:. \]
The product formula for Hasse's norm residue symbols shows that $I$ is a finite set of even cardinality.
\item Let $I \subseteq \Primes_{\textnormal{ns}}$ be a finite subset of even cardinality.
There exists a positive definite hermitian space $(V, h)$ of rank $g$ such that
\[ I = \{ p \in \Primes \mid \det(V,h) \notin \Nr(F_p^*) \} \:. \]
Moreover, this space admits the Gram matrix
\[ \diag(1,\dots,1,a) \]
with some positive integer $a$ whose prime divisor are in $I \cup\{q\}$ for some prime $q$. 
This gives a method to construct a positive definite hermitian space of rank $g$ with given determinant, see \cite[Section 3.4]{Kir} for details.
\end{enumerate}
\end{remark}

For the remainder of this section, let $(V, h)$ be a hermitian space over $F$ of rank $g$.
Further let $R$ be an \emph{order} in $F$, that is a subring of $F$ which is a free $\ZZ$-module of rank $2$.
The ring of integers $\OO$ of $F$ is an order and it contains every other order $R$ of $F$.
Thus the index $f:= [\OO : R]$ is finite and it is called the \emph{conductor} of $R$ in $F$.
Note that $R$ is the unique quadratic order of discriminant $f^2 d_F$.
Moreover,
\[ \OO=\ZZ[\omega] \quad\mbox{and} \quad R=\ZZ[f\omega] \quad \mbox{where }  \omega = \frac{d_F + \sqrt{d_F}}{2}\:.\]
A \emph{fractional $R$-ideal} $\fa$ is an $R$-submodule of $F$ which has rank $2$ over $\ZZ$.
It is said to be an \emph{invertible} $R$-ideal if there exists a fractional $R$-ideal $\fb$ such that $\fa\fb=R$.
Given two fractional $R$-ideals $\fa,\fb$ we can define the fractional $R$-ideal $(\fa : \fb)=\{x\in F, x\fb\subseteq \fa\}$
called the \emph{colon-quotient} of $\fa$ and $\fb$. The particular case $(\fa : \fa)$ is called the \emph{multiplicator ring} of $\fa$. It is the unique order in $F$ for which $\fa$ is invertible.

\begin{definition}
An \emph{$R$-lattice} of rank $r$ is a finitely generated $R$-submodule of $V$ such that $FL := L\otimes_R F$ has dimension $r$.
If $r=g$ we call $L$ a \emph{full} $R$-lattice in $V$.
\end{definition}

The following result is due to Borevich and Faddeev \cite{BoFa}.

\begin{proposition}
Let $L$ be a full $R$-lattice in $V$.
Then there exist a basis $(x_1,\dots,x_g)$ of $V$, some fractional ideals $\fa_1,\dots,\fa_g$ of $R$ and a chain of orders $R \subseteq R_1\subseteq\dots\subseteq R_g$ such that $\fa_i$ is an invertible $R_i$-ideal and
\[ L=\fa_1 x_1 \oplus \dots \oplus \fa_g x_g \:. \]
The list of pairs $(\fa_i,x_i)_{i=1,\dots,g}$ is called a \emph{pseudo-basis} of $L$.
\end{proposition}

In the implementation of our algorithms we represent an $R$-lattice either via a pseudo basis or a $\ZZ$-basis
and we use the results of \cite{BoFa} to switch between these two types of representations.

\begin{definition} \label{def:decomp}
Let $L$ be an $R$-lattice in $V$.
\begin{enumerate}
\item
The \emph{dual lattice} of $L$ is
\[ L^\#=\{x \in V \mid h(x,L)\subseteq R\} \:. \]
\item The lattice $L$ is called \emph{integral} if $L\subseteq L^\#$ and \emph{unimodular} if $L=L^\#$.
\item An integral $R$-lattice $L$ is called even, if $h(x,x) \in 2\ZZ$ for all $x \in L$; otherwise it is called odd.
\item
The lattice $L$ is called \emph{decomposable} if there exists two non-trivial $R$-submodules $L_1,L_2$ of $L$ such that $L=L_1 \oplus L_2$ and $h(x_1,x_2) = 0$ for all $x_i \in L_i$.
If this is the case, we write $L = L_1 \perp L_2$.
\item If $L$ is a free $R$-lattice with basis $b$, then $\det(L):= \det(\Gram(b))$ is the \emph{determinant} of $L$.
It is a well defined element in $\QQ^* / \Nr(R^*)$. 
\item Given $a_1,\dots,a_g \in \QQ^*$, we denote by
\[ \langle a_1,\dots,a_g \rangle \]
the free hermitian $R$-lattice $(L', h')$ of rank $g$ having an orthogonal basis $(b_1,\dots,b_g)$ such that $h'(b_i,b_i) = a_i$ for all $1 \le i \le g$.
\end{enumerate}
\end{definition}

Let $L$ be an $R$-lattice with pseudo-basis $(\fa_i,x_i)$. Denote by $(x_i^\#)$ the dual basis $(x_i)$, i.e. the basis of $V$ such that $h(x_i,x_i^\#)=\delta_{i,j} $ for all $1 \le i,j \le g$.
Then 
\[ L^\#=\bigoplus_{i=1}^g \overline{(R : \fa_i)} x_i^\#\:. \]
From this fact and the relation $(R : (R :\fa))=\fa$ it is easy to see that $(L^\#)^\#=L$. 

\begin{lemma}\label{LatIndec}
Let $L$ be an $R$-lattice in $(V,h)$ and let $L_1,\dots,L_n$ be $\ZZ$-submodules of $L$.
For $a \in F$ let
\[ f_a \colon V \times V \to \QQ,\; (x,y) \mapsto \Tr(a h(x,y) ) \:. \]
The following are equivalent:
\begin{enumerate}
\item $L = L_1 \perp \ldots \perp L_n$ is an orthogonal decomposition into $R$-lattices.
\item $L = \bigoplus_i L_i$ and $f_1(L_i, L_j) = f_{\sqrt{d}}(L_i, L_j) = \{0\}$ for all $i \ne j$.
\end{enumerate}
\end{lemma}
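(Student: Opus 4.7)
The plan is straightforward and turns on the fact that the two $\QQ$-bilinear forms $f_1$ and $f_{\sqrt d}$ together detect $h$.

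The direction $(1) \Rightarrow (2)$ is immediate: if $h(L_i, L_j) = 0$, then clearly $\Tr(a h(x,y)) = 0$ for any $a \in F$, in particular for $a = 1$ and $a = \sqrt d$. I would dispose of this in one line.

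For $(2) \Rightarrow (1)$ the main point is to promote the vanishing of the two traces into the vanishing of $h$ itself. I would write $h(x,y) = \alpha + \beta \sqrt d$ with $\alpha, \beta \in \QQ$ for arbitrary $x \in L_i$, $y \in L_j$ with $i \ne j$. Then $f_1(x,y) = 2\alpha$ and $f_{\sqrt d}(x,y) = 2\beta d$, so the two hypotheses force $\alpha = \beta = 0$ (using $d \ne 0$) and hence $h(L_i, L_j) = \{0\}$.

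The subtlety that remains is that the statement only assumes the $L_i$ are $\ZZ$-submodules, whereas an orthogonal decomposition requires them to be $R$-submodules. I would handle this by passing to the $F$-spans $V_i := F \cdot L_i$. Since $h$ is $F$-sesquilinear, the orthogonality extends: $h(V_i, V_j) = 0$ for $i \ne j$. From $L = \sum_i L_i$ one gets $V = \sum_i V_i$, and any $v \in V_i \cap \sum_{j \ne i} V_j$ satisfies $h(v, V_k) = 0$ for every $k$ (orthogonality on one side, membership in $V_i$ on the other), so $v$ lies in the radical of $V$, which is zero by the blanket non-degeneracy hypothesis. Thus $V = \bigoplus_i V_i$ as $F$-vector spaces. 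I expect this non-degeneracy argument to be the main obstacle to bookkeeping correctly, but it is a standard move.

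Finally, letting $\pi_i \colon V \to V_i$ be the $F$-linear projection, for $r \in R$ and $x \in L_i$, the element $rx \in L$ decomposes as $rx = \sum_j y_j$ with $y_j \in L_j$; applying $\pi_i$ and using $rx \in V_i$ yields $rx = y_i \in L_i$, so each $L_i$ is $R$-stable. Combined with the vanishing $h(L_i, L_j) = 0$ already established, this gives the orthogonal decomposition $L = L_1 \perp \dots \perp L_n$ as $R$-lattices.
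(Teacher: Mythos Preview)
Your proof is correct and follows essentially the same approach as the paper: recover $h(L_i,L_j)=0$ from the two trace conditions, then show each $L_i$ is $R$-stable by writing $rx\in L=\bigoplus_j L_j$ and projecting. The only cosmetic difference is that the paper works with the $\QQ$-spans $\QQ L_i$ (for which the direct-sum decomposition of $V$ is automatic from $L=\bigoplus L_i$, and one then uses the induced non-degeneracy of $f_1$), whereas you pass to the $F$-spans $FL_i$ and invoke non-degeneracy of $h$ to obtain directness; both routes lead to the same projection argument.
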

\begin{proof}
We only need to prove that (2) implies (1).
Let $x \in L_i$ and $y \in \bigoplus_{j \ne i} L_j$.
Then $f_1(x,y) = f_{\sqrt{d}}(x,y) = 0$ and thus $\Tr(a h(x,y)) = 0$ for all $a \in F$.
Since $F/\QQ$ is separable, it follows that $h(x,y) = 0$.
Let $r \in R$. Then $h(rx, y) = 0$ and thus $f_a(rx, y) = 0 $ for all $a \in F$.
Hence $rx \in \QQ L_i \cap L = L_i$. So $L_i$ is indeed an $R$-module.
\end{proof}

If $(V, h)$ is positive definite, then so is the rational bilinear map $f_1$ from above.
In this case, a well known result of Kneser shows that there exists a unique decomposition of $L$ as in Lemma~\ref{LatIndec} (2) into minimal $\ZZ$-submodules.
It can be computed as in \cite[Algorithm 4.5]{HeVa}.
Hence the previous lemma shows that any positive definite hermitian $R$-lattice $L$ has a unique decomposition into indecomposable sublattices and it yields a method to compute these sublattices.

For a prime $p \in \Primes$ let $R_p:= \ZZ_p \otimes_\ZZ R$ and $L_p:= R_p \otimes_R L$ be the completions of $R$ and $L$ at $p$.
Then $L_p$ is an $R_p$-lattice in $(V_p, h)$.
The introduced notion for $R$-lattices carries over to $R_p$-lattices.
For example we call an $R_2$-lattice $L$ even, if $h(x,x) \in 2 \ZZ_2$ for all $x \in L$.

\subsection{Enumeration of positive definite unimodular hermitian lattices}\label{sec:class}

Let $R = \ZZ[\omega f]$ be the order of conductor $f$ in $F$.
In this section, we present an algorithm to enumerate all positive definite unimodular $R$-lattices of a given rank.

\begin{definition}
Let $L$ and $L'$ be full $R$-lattices in the hermitian spaces $(V,h)$ and $(V',h')$.
The lattices $L$ and $L'$ are said to be isometric, if there exists an isometry $\varphi$ from $(V,h)$ to $(V',  h')$ such that $\varphi(L) = L'$.
In this case, we write $L \cong L'$.
Further let
\[
 \cls(L) = \{ \varphi(L) \mid \varphi \in \GU(V,h) \} \quad \mbox{and} \quad
 \Aut(L) = \{ \varphi \in \GU(V,h) \mid \varphi(L) = L \}
\]
be the \emph{isometry class} and the \emph{automorphism group} of $L$.
Similarly one defines isometries between the completions $L_p$ and $L'_p$ at a prime $p$.
The \emph{genus} of $L$ is
\[ \gen(L):= \{L' \subset V \mid L' \mbox{ is an $R$-lattice such that } L_p \cong L'_p \mbox{ for all } p \in \Primes \} \:. \]
\end{definition}

When $R$ is the maximal order, the following remark shows how to find a lattice in a given genus.

\begin{remark}\label{FindGenus}
Every $\OO_p$-lattice $L_p$ admits an orthogonal decomposition 
\[ L_p = L_1 \perp \ldots \perp L_r \quad \mbox{such that } p^{s_i} L_i^{\#} = L_i \]
for some integers $s_1 < \ldots < s_r$.
Jacobowitz \cite{Jac} shows that $(s_1,\dots,s_r)$ together with the ranks and determinants of the lattices $L_i$ uniquely describe the isometry class of $L_p$ unless $p=2$ and $F_2/\QQ_2$ is ramified. 
For the remaining case, he shows that some additional invariants are needed.\\
Let $G$ be a genus of hermitian $\OO$-lattices. Suppose that for each prime $p$, we are given the $p$-adic local invariants of the lattices in $G$.
Then we can construct an $\OO$-lattice in $G$ as follows.
\begin{enumerate}
\item Since the local invariants yield the determinant of $L_p$, we can construct a hermitian space $(V,h)$ over $F$ that contains this genus using Remark~\ref{FindSpace}.
\item Fix any $\OO$-lattice $L$ in $V$.
Then the set of all primes $p$ where $L_p$ has the wrong invariants is finite.
\item If $L_p$ has the wrong invariants, let $X$ be any $\OO$-lattice in some hermitian space $(V', h')$ over $F$ such that $X_p$ has the correct invariants.
Approximate an isometry between $(V'_p, h')$ and $(V_p, h)$ by some $F$-linear map $\varphi \colon V' \to V$.
If the approximation is good enough, then $\varphi(X)_p$ has the same invariants as $X_p$.
Then there exists $a,b \in \ZZ$ such that
\[ p^a L_p \subseteq \varphi(X)_p \subseteq p^b L_p \:. \]
Now the lattice $(\varphi(X) + p^a L) \cap p^b L$ coincides with $L$ at all places different from $p$ and it has the correct invariants at $p$.
So if we iterate this step, we end up with an $\OO$-lattice in $G$.
\end{enumerate}
A different approach is suggested in \cite[Section 3.5]{Kir}.
\end{remark}

Let $L$ be an $R$-lattice in a positive definite hermitian space over $F$.
The analogue of Landherr's theorem does not hold for hermitian $R$-lattices, i.e. the genus of $L$ does not necessarily consist of a single isometry class.
However, the genus of $L$ is a disjoint union of finitely many isometry classes
\begin{equation}\label{eq:decomp}
\gen(L) = \biguplus_{i=1}^{h(L)} \cls(L_i) \:. 
\end{equation}
The number of classes $h(L)$ is called the \emph{class number} of (the genus of) $L$.
There are only very few partial results like \cite{HaKo,HaKo2} on how to deduce the class number from local invariants and these only deal with $\OO$-lattices.

Thus an important problem is to work out the class number $h(L)$ or more generally to make the decomposition in Equation \eqref{eq:decomp} explicit.
This can be done by Kneser's neighbour method. It is explained in great detail in \cite{Sch} for $\OO$-lattices.
Note that this is all we need, since we will reduce the case that $R$ is non-maximal to this special case in Algorithm~\ref{UnimodR}.

The basic idea of Kneser's method the following:
Let $\fp$ be a prime ideal of $\OO$ over $p > 2$ such that $L_p$ is unimodular.
An $\OO$-lattice $L'$ in $V$ is called a $\fp$-neighbour of $L$ if $L/(L \cap L') \cong \OO/\fp$ and $L'/(L \cap L') \cong \OO/\overline{\fp}$.
Any $\fp$-neighbour of $L$ lies in $\gen(L)$ and the $\fp$-neighbours of $L$ can be enumerated quickly.
Strong approximation yields a finite set $S$ of unramified prime ideals of $\OO$ such that given $L' \in \gen(L)$, there exists a sequence of $\OO$-lattices $L = L_0,L_1,\dots,L_r \cong L'$ such that $L_i$ is a $\fp_i$-neighbour of $L_{i-1}$ for some $\fp_i \in S$.
In fact, Shimura \cite[Theorem 5.24 and its proof 5.28]{Shi} shows how to choose such a set $S$.
Note that if $g$ is even, his description makes use of the groups $\{\det(g) \mid g \in \Aut(L_p)\}$ at primes $p$ that ramify in $F$.
These groups have recently been worked out in \cite{Det}.
So the isometry classes in $\gen(L)$ are found by repeatedly computing $\fp$-neighbours for some $\fp \in S$.

Note that this procedure can be sped up considerably by using Siegel's mass formula as a stopping condition:
Since isometric lattices have isomorphic automorphism groups, the \emph{mass} of $L$
\[ \Mass(L):= \Mass(\gen(L)) = \sum_{i=1}^{h(L)} \frac{1}{\# \Aut(L_i)} \]
is a well-defined positive rational number, which only depends on the genus of $L$.
It can be computed a priori using Siegel's mass formula, which expresses $\Mass(L)$ in terms of special values of $L$-series and local factors that depend on the genus of $L$.
The local factors have been worked out by Gan and Yu \cite{GY} for all primes $p$, except if $p=2$ ramifies in $F$.
In this exceptional case the local factors can be worked out as explained in \cite[Sections 4.3 and 4.5]{Kir}.

So if $R=\OO$ is maximal, we can construct lattices in a given genus and enumerate the isometry classes in this genus.
We will now extend these methods to enumerate the isometry classes of positive definite unimodular $R$-lattices.
Note that these lattices might lie in non-isometric hermitian spaces.

\begin{lemma}\label{LtoM}
Let $L$ be a unimodular hermitian $R$-lattice. Then $M:= \OO L$ is an integral $\OO$-lattice and
\[ fM^{\#, \OO} \subseteq L \subseteq M \:. \]
\end{lemma}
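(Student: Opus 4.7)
The plan is to handle the three assertions (integrality of $M$, the inclusion $L \subseteq M$, and the bound $f M^{\#,\OO} \subseteq L$) separately, with the main ingredient being the classical fact that the conductor $f$ of $R$ in $\OO$ satisfies $f\OO \subseteq R$, i.e.\ $f\OO$ is precisely the conductor ideal of $R$ viewed as an ideal of $\OO$.

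First, the inclusion $L \subseteq M = \OO L$ is immediate since $1 \in \OO$. For integrality of $M$, I would take arbitrary $x,y \in M$, write them as finite sums $x = \sum_i \alpha_i \ell_i$ and $y = \sum_j \beta_j m_j$ with $\alpha_i, \beta_j \in \OO$ and $\ell_i, m_j \in L$, and expand
\[ h(x,y) = \sum_{i,j} \alpha_i \overline{\beta_j}\, h(\ell_i, m_j). \]
Since $L$ is unimodular it is in particular integral, so $h(\ell_i, m_j) \in R \subseteq \OO$, and $\OO$ is stable under multiplication and conjugation, hence $h(x,y) \in \OO$. This shows $M \subseteq M^{\#,\OO}$.

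For the bottom inclusion, let $x \in M^{\#,\OO}$, so by definition $h(x, M) \subseteq \OO$. Using $R$-sesquilinearity and $L \subseteq M$,
\[ h(fx, L) = f\, h(x,L) \subseteq f\, h(x, M) \subseteq f\OO. \]
The key step is now to observe $f\OO \subseteq R$: indeed $R = \ZZ[f\omega] = \ZZ + \ZZ (f\omega)$ and $f\OO = \ZZ f + \ZZ (f\omega)$, so both generators of $f\OO$ lie in $R$. Therefore $h(fx, L) \subseteq R$, which means $fx \in L^{\#}$. Since $L$ is unimodular, $L^{\#} = L$, so $fx \in L$, giving $fM^{\#,\OO} \subseteq L$ as required.

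The argument is essentially bookkeeping; the only conceptual point that could trip one up is the identification of $f\OO$ with the conductor ideal, and keeping straight the distinction between $L^{\#}$ (dual with respect to $R$) and $M^{\#,\OO}$ (dual with respect to $\OO$) so that the correct target ring appears on each side of the computation.
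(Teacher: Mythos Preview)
Your proof is correct and follows essentially the same approach as the paper's: both arguments reduce the main inclusion to the observation $f\OO \subseteq R$ and the unimodularity $L^{\#}=L$. The paper's version is terser (it declares the integrality of $M$ and $L \subseteq M$ ``clear'' and phrases the last step as starting from $z \in fM^{\#,\OO}$ rather than $x \in M^{\#,\OO}$), but the content is the same.
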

\begin{proof}
The fact that $M$ is integral and the inclusion $L \subseteq M$ are clear. 
Suppose now $z \in fM^{\#, \OO}$. Hence $h(z/f, M) \subseteq \OO$. This implies $h(z, L) \subseteq f\OO \subseteq R$.
So $z \in L^{\#, R} = L$.
\end{proof}

\begin{algorithm}[H]
\caption{Enumeration of unimodular positive definite hermitian $R$-lattices of rank $g$.}
\label{UnimodR}
\begin{algorithmic}[1]
    \Require An order $R$ of conductor $f$ in an imaginary quadratic number field $F$ and an integer $g \ge 1$.
    \Ensure A set $\mathcal{L}$ of $R$-lattices representing the isometry classes of positive definite, unimodular hermitian $R$-lattices of rank $g$.
    \State $\mathcal{L} \gets \emptyset$.
    \State Let $p_1,\dots,p_s$ be the prime divisors of $fd_F$ that do not split in $F$.
    \ForAll{subsets $I \subseteq \{p_1,\dots,p_s\}$ of even cardinality}
      \State Using Remark~\ref{FindSpace} construct some positive definite hermitian form $h \colon F^g \times F^g \to F$ such that \[ \{ p \in \Primes \mid \det(F^g, h) \notin \Nr(F_p^*) \} = I \:. \]
      \State Using Remark~\ref{FindGenus} find $\OO$-lattices $G_1, \dots, G_r$ representing the genera of all integral $\OO$-lattices $M$ in $(F^g, h)$ such that $f M^{\#, \OO} \subseteq M$.
      \For{$1 \le i \le r$}
         \State Let $M_1,\dots,M_s$ represent the isometry classes of $\OO$-lattices in $\gen(G_i)$ using Kneser's method.
	 \If{$R = \OO$}
            \State $\mathcal{L} \gets \mathcal{L} \cup \{M_1,\dots,M_s\}$.
	 \Else
         \For{$1 \le j \le s$}
            \State Let $L_1,\dots, L_t$ be orbit representatives of the action of $\Aut(M_j)$ on 
              \[ \{ L \subseteq M_j \mid L \mbox{ a unimodular $R$-lattice containing $fM_j^{\#, \OO}$ with $\OO L = M_j$}\} \:. \]
            \State $\mathcal{L} \gets \mathcal{L} \cup \{L_1,\dots, L_t\}$.
         \EndFor
	 \EndIf
      \EndFor
    \EndFor
    \State \Return $\mathcal{L}$.
\end{algorithmic}
\end{algorithm}

\begin{proposition}
Algorithm~\ref{UnimodR} which takes as input an order $R$ of conductor $f$ in an imaginary quadratic field and an integer $g \ge 1$ outputs the list of $R$-lattices representing the isometry classes of positive definite, unimodular hermitian $R$-lattices of rank $g$.
\end{proposition}
\begin{proof}
Let $L$ be a unimodular, full $R$-lattice in a positive definite hermitian space $(V, h')$ of rank $g$.
We first show that the set $\mathcal{L}$ returned by the algorithm contains a lattice isometric to $L$.
Let $p$ be a prime not dividing $f d_F$. 
Then $L_p$ is a unimodular $\OO_p$-lattice.
If $p$ splits in $F$, then $\det(V_p, h') \in \QQ_p^* = \Nr(F^*_p) $.
Suppose now $p$ is non-split.
By \cite[Proposition 4.4]{Jac} $L_p$ admits an orthogonal basis. Hence $\det(V_p, h')$ has a representative in $\ZZ_p^* \subseteq \Nr(F^*_p)$.
So Landherr's theorem implies that $(V, h')$ is isometric to one of the spaces $(F^g, h)$ the algorithm considers.
After replacing $L$ by an isometric copy, we may therefore assume that $M:= \OO L$ is one of the lattices $M_j$ in line 7.
Proposition \ref{LtoM} shows $fM_j^{\#, \OO} \subseteq L \subseteq M_j$.
Thus $\mathcal{L}$ contains an $R$-lattice isometric to $L$.\\
Next we show that $\mathcal{L}$ does not represent any isometry class twice.
Suppose $L_1, L_2 \in \mathcal{L}$ are isometric.
This isometry extends to an isometry between $\OO L_1$ and $\OO L_2$.
By construction, this implies $\OO L_1 = \OO L_2$.
Hence $L_1$ and $L_2$ are in the same orbit under $\Aut(\OO L_1)$.
This shows $L_1 = L_2$.
\end{proof}

If we restrict ourselfs to projective unimodular $R$-lattices, we can speed up Algorithm~\ref{UnimodR} considerably.
To this end, let $L$ be a full, projective $R$-lattice in a positive definite hermitian space $(V, h)$ over $F$ and set $M = \OO L$.
The $R$-lattice $L$ has a pseudo-basis  
\[ L = \bigoplus_{i=1}^g \fa_i x_i \]
with invertible fractional ideals $\fa_1,\dots,\fa_g$ of $R$ since $L$ is a projective $R$-module. Let $(x_1^\#,\dots,x_g^\#)$ denote the dual basis of $(x_1,\dots,x_g)$. Then
\[ M = \bigoplus_{i=1}^g \OO \fa_i x_i, \quad L^{\#, R} = \bigoplus_{i=1}^g \overline{(R: \fa_i)} x_i^\#  \quad \mbox{and} \quad M^{\#, \OO} =   \bigoplus_{i=1}^g \overline{(\OO: \OO \fa_i)} x_i^\# = \OO L^{\#, R}\:. \]
Since $(R:\fa_i)$ is an invertible $R$-ideal, we see that $ L^{\#, R}$ is projective as well.

\begin{proposition}\label{prop:descent}
Let $L$ be a full, projective $R$-lattice in a hermitian space $(V, h)$ and let $M = \OO L$.
Let $\Phi$ be the bilinear map defined by
\begin{equation}\label{eq:Phi}
 \Phi \colon M/fM \times M/fM \to \OO/R \cong \ZZ/f\ZZ, \; (x,y) \mapsto h(x,y) + R \:.
\end{equation}
Then the following hold.
\begin{enumerate}
\item
If $L$ is a unimodular $R$-lattice, then $M$ is a unimodular $\OO$-lattice.
\item If $M$ is a unimodular $\OO$-lattice, then the following are equivalent:
\begin{enumerate}
\item
$L$ is a unimodular $R$-lattice.
\item
$L$ is an integral $R$-lattice.
\item
$L/fM$ is an isotropic subspace of $(M/fM, \Phi)$, i.e. $\Phi(x,y) = 0$ for all $x,y \in L/fM$.
\end{enumerate}
\end{enumerate}
\end{proposition}
\begin{proof} (1) The discussion before the proposition shows that $L = L^{\#, R}$ implies $M = M^{\#, \OO}$.
(2b) $\implies$ (2a):
We have $L \subseteq L^{\#,R}$ by assumption. Equality follows from the fact that the projective $R$-modules $L$ and $L^{\#, R}$ both have index $f^g$ in $M = M^{\#, \OO} = \OO L^{\#, R}$.
The implications (2a) $\implies$ (2b) $\iff$ (2c) are clear.
\end{proof}

\begin{algorithm}[H]
\begin{algorithmic}[1]
\caption{Enumeration of projective unimodular $R$-lattices of rank $g$.} \label{algo:proj}
\Require{An integer $g \ge 2$ and an order $R$ in $F$.}
\Ensure{A set of representatives of the isometry classes of projective, positive definite, unimodular hermitian $R$-lattices of rank $g$.}
\State Fix a chain of minimal overorders $R = \OO^{(0)} \subsetneq \OO^{(1)} \subsetneq \ldots \subsetneq \OO^{(r)} = \OO$.
\State Using Algorithm~\ref{UnimodR} compute a set $\mathcal{S}$ of representatives of isometry classes of unimodular hermitian $\OO$-lattices of rank $g$.
\For{$i=r, \dots, 1$}
\State Let $p$ be the index of $\OO^{(i-1)}$ in $\OO^{(i)}$.
\State $\mathcal{T} \gets \emptyset$.
\For{$M \in \mathcal{S}$}
\State Let $\mathcal{V}$ represent the orbits of all $g$-dimensional isotropic subspaces of $(M/pM, \Phi)$ under the action of $\Aut(M)$ where $\Phi$ is chosen as in Equation \eqref{eq:Phi}.
\For{$V \in \mathcal{V}$}
\State Let $L$ be the full preimage of $V$ under the canonical epimorphism $M \to M/pM$.
\State If $L$ is an integral $\OO^{(i-1)}$-lattice with $\OO^{(i)}L = M$ then include $L$ to the set $\mathcal{T}$.
\EndFor
\EndFor
\State $\mathcal{S} \gets \mathcal{T}$.
\EndFor
\State \Return $\mathcal{S}$.
\end{algorithmic}
\end{algorithm}
\begin{proposition}
Algorithm~\ref{algo:proj} which takes as input an order $R$ in an imaginary quadratic field and an integer $g \ge 2$ outputs the list of $R$-lattices representing the isometry classes of positive definite, unimodular, projective hermitian $R$-lattices of rank $g$.
\end{proposition}
\begin{proof}
After line 2, $\mathcal{S}$ is a set of representatives of the isometry classes of projective, unimodular hermitian $\OO^{(r)}$-lattices.
Let $L$ be a projective unimodular hermitian $\OO^{(r-1)}$-lattice.
Then $M:= \OO^{(r)} L$ is a projective unimodular hermitian $\OO^{(r)}$-lattice. So without loss of generality $M \in \mathcal{S}$.
Thus Proposition \ref{prop:descent} shows that the set $\mathcal{T}$ in line 13 contains an $\OO^{(r-1)}$-lattice isometric to $L$.
Suppose it contains two such lattices $L_1$ and $L_2$. Then there is an isometry $\sigma \colon L_1 \to L_2$ which induces an isometry $\OO^{(r)}L_1 \to \OO^{(r)}L_2$.
But then $\OO^{(r)}L_1 = M = \OO^{(r)}L_2$ and $\sigma \in \Aut(M)$. Hence $L_1$ and $L_2$ are in the same $\Aut(M)$-orbit.
This shows that $L_1 = L_2$.
Hence after line 13, $\mathcal{S}$ is a set of representatives of the isometry classes of projective, unimodular hermitian $\OO^{(r-1)}$-lattices.
By induction it follows that after $r$ iterations, $\mathcal{S}$ represents the isometry classes of projective, unimodular hermitian $R$-lattices.
\end{proof}

Note that Algorithm \ref{algo:proj} calls Algorithm~\ref{UnimodR}.
But if $R = \OO$ is maximal, the expensive steps 11--14 of Algorithm~\ref{UnimodR} are skipped.
They are replaced by a much more refined descent in lines 3--13 of Algorithm~\Ref{algo:proj}, which is based on Proposition~\ref{prop:descent}.

Also note that in Algorithm \ref{algo:proj} it would be possible to go from $\OO$-lattices to $R$-lattices directly.
But then it would be much more difficult to find the desired (projective) $R$-lattices between $fM$ and $M$.

\subsection{Orthogonal families inside a lattice} \label{sec:ortho}

Let $(V,h)$ be a positive definite hermitian space over $F$ of rank $g$.
Let $R$ be the order in $F$ of conductor $f$.

In this section, we give necessary and sufficient conditions for a unimodular hermitian $R$-lattice to contain a free $R$-sublattice isometric to $\langle \ell, \dots, \ell \rangle$ for some $\ell \in \NN$, which we may require to be odd.
Such a free sublattice will be needed in Section~\ref{sec:effectivekernel}, where we provide an algorithm for finding a good isogeny from our target principally polarized abelian variety to a totally decomposable one.
But we also think that this problem arises naturally and should deserve more investigations around the smallest values of $\ell$ that can be obtained.

We will prove the following result.

\begin{theorem}\label{ellid}
Let $L$ be a full $R$-lattice in $(V, h)$.
Then the following hold:
\begin{enumerate}
\item There exists an orthogonal basis $(b_1,\dots,b_g) \in L^g$ of $V$.
\item \label{ellid:2}
There exists an integer $\ell$ and a free $R$-sublattice $L'$ of $L$ such that $L' \cong \langle \ell,\dots,\ell \rangle$ if and only if $g$ is odd or $\det(V,h) \in \Nr(F^*)$.
\item \label{item3}
Let $L$ be unimodular and let $a \in \ZZ \setminus \{0\}$.
Suppose $g$ is odd or $\det(V,h) \in \Nr(F^*)$.
There exists some positive integer $\ell$ coprime to $a$ and a free $R$-sublattice $L'$ of $L$ such that $L' \cong \langle \ell,\dots,\ell \rangle$ if and only if the following conditions hold.
\begin{enumerate}
\item For all primes $p \mid a$ the module $L_p$ is free over $R_p$.
\item If $a$ is even then there exists some $\ell_2 \in \ZZ_2^*$ such that $L_2 \cong \langle \ell_2,\dots,\ell_2\rangle$.
\item If $g$ is even, then $\det(L_p, h) \in \Nr(R_p^*)$ for all odd primes $p$ such that $p \mid \gcd(a, f)$.
\end{enumerate}
\end{enumerate}
\end{theorem}

Remark~\ref{ellidcheck} below shows how to check the conditions (a)--(c) in part 3 of Theorem~\ref{ellid}.
Let $L$ be an \mbox{$R$-lattice} $L$ in $V$.
Then we can find an orthogonal basis of $V$ in $L$ as follows.
For any positive rational number $\ell$ the map
\[ q_\ell \colon V \to \QQ,\; v \mapsto \Tr(h(v,v)/\ell) \]
is a positive definite quadratic form on the $\QQ$-space $V$ and
\[ \{ v \in L \mid h(v,v) = \ell \} \subseteq \{ v \in L \mid q_\ell(v) = 2 \} \:. \]
Note that the right hand side is finite and it can be enumerated using the Fincke-Pohst algorithm~\cite{FP}.
This allows us to compute the set of vectors in $(L, h)$ of norm $\ell$.

It is now clear how to find an orthogonal basis as in Theorem~\ref{ellid}.
For part (1), we use the usual Gram-Schmidt process.
For parts (2) and (3), we apply Algorithm~\ref{algo:ortho2} to $\ell=1,2,3,\dots $ until we find a suitable basis.
As all our algorithms, its complexity is at least exponential in the rank $g$.
We could not find in the literature any result about a possible upper bound on $\ell$ when it exists.

\begin{algorithm}[H]
\begin{algorithmic}[1]
\Require A full $R$-lattice $L$ in $V$ and a rational number $\ell > 0$.
\Ensure An orthogonal basis of $V$ consisting of vectors in $L$ of norm $\ell$ if possible; otherwise $\emptyset$.
\Function{BackTrack}{$F$, $S$}
\IIf{$\#F = g$}{~\Return $F$} \EndIIf
\IIf{$\#F + \dim \langle S\rangle < g$}{~\Return $\emptyset$} \EndIIf
\State Pick some $v \in S$.
\If{$h(v, f) = 0$ for all $f \in F$}
\State $T \gets $ \Call{BackTrack}{$F \cup \{v\}$,  $\{w \in S \mid h(v,w) = 0 \})$}.
\IIf{$T \ne  \emptyset$}{~\Return $T$} \EndIIf
\EndIf
\State \Return \Call{BackTrack}{$F$, $S \setminus \{v\}$}.
\EndFunction
\IIf{$\ell^g \cdot \det(V, h) \notin \Nr(F^*)$}{~\Return $\emptyset$}\EndIIf
\State $S \gets \{ v \in L \mid h(v,v) = \ell \}$.
\State \Return \Call{BackTrack}{$\emptyset$, $S$}.
\end{algorithmic}
\caption{Computation of an orthogonal family of $g$ vectors of norm $\ell$} \label{algo:ortho2}
\end{algorithm}

The remainder of this section gives a proof of Theorem~\ref{ellid}.
We start by giving a classification of all free unimodular hermitian $R_p$-lattices which admit an orthogonal basis.
If $R_p$ is maximal, this follows from Jacobowitz classification of local hermitian lattices \cite{Jac}.

\begin{proposition}\label{LatOGBasis}
Let $L$ be a free, unimodular hermitian $R_p$-lattice of rank $g$. Then
\[ L = L_1 \perp \ldots \perp L_r \]
for some free unimodular hermitian $R_p$-sublattices $L_i$ of rank at most $2$.
If one of $p, g$ or $L$ is odd, then all $L_i$ can be chosen to have rank $1$.
\end{proposition}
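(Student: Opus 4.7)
The plan is to proceed by induction on the rank $g$, using the fundamental fact that a unimodular sublattice $M$ of a unimodular hermitian $R_p$-lattice $L$ is an orthogonal direct summand, with $M^{\perp}$ again unimodular. This reduces each inductive step to producing a unimodular sublattice of rank $1$ or $2$.

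For the first assertion, since $L$ is unimodular one can pick $v, w \in L$ with $h(v, w) \in R_p^{\times}$. If either $h(v, v)$ or $h(w, w)$ lies in $\ZZ_p^{\times}$, then the corresponding cyclic submodule $R_p v$ or $R_p w$ is already a unary unimodular sublattice. Otherwise both self-norms lie in $p\ZZ_p$, and then the $2 \times 2$ Gram matrix of $R_p v + R_p w$ has determinant
\[
h(v,v)\, h(w,w) - \Nr_{F/\QQ}(h(v,w)) \in \ZZ_p^{\times},
\]
because the subtracted term, being the norm of a unit, is a unit of $\ZZ_p$. Thus $R_p v + R_p w$ is a rank-$2$ unimodular sublattice, completing the first assertion by induction.

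For the second assertion one must produce $v \in L$ with $h(v, v) \in \ZZ_p^{\times}$ under any of the three hypotheses. When $L$ is odd this is the very definition (observe that the oddness condition is vacuous unless $p = 2$, since $2 \in \ZZ_p^{\times}$ forces $2\ZZ_p = \ZZ_p$ otherwise). When $p$ is odd, starting from $v, w$ with $h(v, w) \in R_p^{\times}$, the identity
\[
h(v + \alpha w,\, v + \alpha w) = h(v, v) + \Nr(\alpha)\, h(w, w) + \Tr\bigl(\alpha\, h(w, v)\bigr)
\]
together with $2 \in \ZZ_p^{\times}$ allows one to choose $\alpha \in R_p$ so that the trace term is a unit modulo $p\ZZ_p$, giving the desired vector. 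When $g$ is odd the only remaining case is $p = 2$, and I would argue that an even unimodular $R_p$-lattice of odd rank cannot exist, reducing to the previous case. For the maximal-order case $R_p = \OO_p$ (that is, $p \nmid f$) this is immediate from Proposition~\ref{LocalUnimod}, which shows that at $p = 2$ every even unimodular $\OO_p$-lattice has even rank. For the non-maximal case $p \mid f$, one would use Lemma~\ref{LtoM} together with a chain of minimal overorders $R_p \subsetneq \cdots \subsetneq \OO_p$, transferring the parity condition stage by stage up to the maximal order and then invoking \ref{LocalUnimod}.

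The main obstacle lies in this last sub-case, the non-maximal order at $p = 2$. Parity of self-norms does not transfer straightforwardly under the extension of scalars from $L$ to $M := \OO_p L$, so one has to track the $\ZZ_2$-quadratic form $Q(v) = h(v, v)$ carefully through the chain $fM^{\#, \OO_p} \subseteq L \subseteq M$ of Lemma~\ref{LtoM}, using that at each step the index between consecutive overorders is prime and analysing the induced form on the reduction modulo the conductor.
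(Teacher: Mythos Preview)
Your argument for the first assertion, and for the case $p$ odd, is essentially the paper's. The trouble is the induction for the second assertion when $p=2$.

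You phrase the second assertion as: ``produce $v \in L$ with $h(v,v) \in \ZZ_2^{\times}$.'' But finding one such $v$ does not close the induction. After splitting off $R_2 v$, the orthogonal complement $L'$ is unimodular, yet it need not be odd (even if $L$ was), and its rank $g-1$ is even (if $g$ was odd). So none of your three hypotheses survives the inductive step, and you cannot simply recurse. This gap is present already in your treatment of the ``$L$ odd'' case, not only in the ``$g$ odd'' case.

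The paper sidesteps this by first running the rank-$\le 2$ decomposition $L = L_1 \perp \cdots \perp L_r$ from the first assertion, and then observing: if $g$ is odd, parity forces some $L_i$ to be unary; if $L$ is odd, one can choose the starting basis so that $b_1$ already has unit norm, hence the first split in the decomposition is unary. Either way there is a unary summand $L_i = R_2 x_1$ with $h(x_1,x_1) \in \ZZ_2^{\times}$. Now for any binary summand $L_j = R_2 x_2 \oplus R_2 x_3$ with both diagonal norms even, set $x_2' = x_2 + x_1$. Then $h(x_2',x_2') = h(x_2,x_2) + h(x_1,x_1)$ is a unit while $h(x_2',x_3) = h(x_2,x_3)$ is unchanged, so $R_2 x_2' \oplus R_2 x_3$ is unimodular of rank~$2$ containing a vector of unit norm; it therefore splits into two unary pieces, and its complement in $L_i \oplus L_j$ is again unary. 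Iterating this over the remaining binary summands produces an orthogonal basis.

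This trick renders your proposed reduction via Lemma~\ref{LtoM} and a chain of overorders unnecessary: one never needs to prove directly that even unimodular $R_2$-lattices of odd rank do not exist. In fact that statement is exactly the Corollary the paper \emph{deduces} from this Proposition, rather than an input to its proof.
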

\begin{proof}
Let $(b_1,\dots,b_g)$ be a basis of $L$.
Suppose first that $h(b_i, b_i) \in \ZZ_p^*$ for some $i$.
Then $L = R_p b_i \perp \sum_{j\ne i} R_p (b_j - \frac{h(b_j, b_i)}{h(b_i, b_i)} b_i)$.
Suppose now that such an index $i$ does not exist.
Since $L$ is free and unimodular, there exist $1 \le i < j \le g$ such that $h(b_i, b_j) \in R_p^*$.
If $p \ne 2$, we can replace $b_i$ with $b_i' := b_i + 1/(2h(b_j,b_i)) b_j$. Then $h(b_i', b_i') \in \ZZ_p^*$ and we obtain a splitting $L = R b_i' \perp L'$ as before.
If $p = 2$, we may assume that $h(b_i, b_j)=1$. Then $L = (R_p b_i \oplus R_p b_j) \perp L' $ where
\[ L' = \bigoplus_{k \ne i,j} R_p (b_k - \frac{h(b_j, b_j)h(b_k, b_i) - h(b_k, b_j)}{h(b_i,b_i)h(b_j,b_j)-1} b_i -  \frac{h(b_i, b_i)h(b_k, b_j) - h(b_k, b_i)}{h(b_i,b_i)h(b_j,b_j)-1} b_j) \:. \]
So in any case, we obtain a decomposition $L = L_1 \perp L'$ with free, unimodular lattices $L_1$ and $L'$ such that the rank of $L_1$ is at most $2$.
The first assertion now follows by induction on the rank $g$ and we have also seen that we can choose all $L_i$ of rank $1$ when $p$ is odd.\\
Suppose now $p = 2$ and also suppose that $g$ or $L$ is odd.
If $L$ is odd, we can choose the vector $b_1$ in our original basis such that $h(b_1,b_1) \in \ZZ_2^*$.
If $g$ is odd, then one of the $L_i$ must have rank $1$.
So in both cases, there exists a summand $ L_i = R_2 x_1$ of rank $1$.
Suppose $L_j = R_2 x_2 \oplus R_2 x_3$ is binary. If $h(x_2,x_2) \in \ZZ_2^*$ or $h(x_3,x_3) \in \ZZ_2^*$, we can split $L_j$ just as before.
So suppose $h(x_2,x_2), h(x_3,x_3) \in 2\ZZ_2$. Let $x_2':= x_2 + x_1$.
Then as before $L_i \oplus L_j = (R_2 x_2' \oplus R_2 x_3) \perp R_2 x_1'$ for some $x'_1 \in L_i \oplus L_j$.
But now  $h(x_2', x_2') \in \ZZ_2^*$ and thus $L_i \oplus L_j$ has an orthogonal basis.
Iterating this argument shows that $L$ has an orthogonal basis.
\end{proof}

\begin{remark} Let $L$ be a free unimodular hermitian $R_p$-lattice.
\begin{enumerate}
\item If $p=2$ and the rank of $L$ is odd, then $L$ is odd.
\item $L$ has an orthogonal basis if and only if $p>2$ or $L$ is odd.
\end{enumerate}
\end{remark}

The classification of all free unimodular hermitian $R_p$-lattices which have an orthogonal basis more or less boils down to a description of the norm group $\Nr(R_p^*)$.
To this end, let
\[ \ZS_p = \{ u^2 \mid u \in \ZZ_p^*\} = \{\Nr(u) \mid u \in \ZZ_p^* \} \]
be group of squares in $\ZZ_p^*$.

\begin{lemma}\label{NormGroup}
If $p$ is odd, then
\begin{align*}
\Nr(R_p^*) &=
\begin{cases}
\ZZ_p^* & \text{if } p \nmid f d_F, \\
\ZS_p & \text{if } p \mid f d_F
\end{cases}
\intertext{and}
\Nr(R_2^*) &=
\begin{cases}
\ZZ_2^* & \text{if $2 \nmid d_F$ and $4 \nmid f$},\\
\ZS_2 \uplus (1-\tfrac{d_F}{4}) \ZS_2 & \text{if $8 \mid d_F$ and $2 \nmid f$},\\
\ZS_2 & \text{if } 2^5 \mid f^2 d_F,\\
\ZS_2 \uplus 5 \ZS_2 & \text{otherwise}.
\end{cases}
\end{align*}
\end{lemma}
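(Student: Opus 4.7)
\emph{Proof plan.}
The approach is to pick the $\ZZ_p$-basis $(1, f\omega)$ of $R_p$, write a generic element as $u = a + bf\omega$ with $a,b \in \ZZ_p$, and expand
\[
\Nr(u) \;=\; a^2 + a b f d_F + b^2 f^2 \Nr(\omega), \qquad \Nr(\omega) = \frac{d_F(d_F-1)}{4}.
\]
The condition $u \in R_p^*$ is equivalent to $a \in \ZZ_p^*$ (reduce modulo $p$), and the inclusions $\ZS_p \subseteq \Nr(R_p^*) \subseteq \ZZ_p^*$ come for free (the first by taking $b=0$). Since $\Nr(R_p^*)$ is a subgroup of $\ZZ_p^*$, the proof reduces to computing $\Nr(u)$ modulo a small power of $p$ and identifying which cosets of $\ZS_p$ are attained.

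For $p$ odd the discussion is short. If $p \nmid fd_F$, then $R_p = \OO_p$ and $F_p/\QQ_p$ is unramified (either split or inert), and the classical equality $\Nr(\OO_p^*) = \ZZ_p^*$ gives the result. If $p \mid fd_F$, then both correction terms $abfd_F$ and $b^2 f^2 \Nr(\omega)$ vanish modulo $p$, so $\Nr(u) \equiv a^2 \pmod p$; Hensel's lemma then lifts $a^2 \bmod p$ to an element of $\ZS_p$, proving $\Nr(R_p^*) \subseteq \ZS_p$ and hence equality.

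For $p = 2$ the argument becomes a mod-$8$ bookkeeping, using $\ZZ_2^*/\ZS_2 \cong (\ZZ/2\ZZ)^2$, the valuation formulas $v_2(\Nr(\omega)) = v_2(d_F-1) - 2$ when $d_F$ is odd and $v_2(\Nr(\omega)) = v_2(d_F) - 2$ when $d_F$ is even, and the fact that $v_2(d_F) \in \{0,2,3\}$. Case (c) is uniform: the hypothesis $2^5 \mid f^2 d_F$ forces $v_2(fd_F) \ge 3$ and $2 v_2(f) + v_2(\Nr(\omega)) \ge 3$, so both correction terms vanish modulo $8$ and $\Nr(u) \equiv 1 \pmod 8$. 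Case (b) ($v_2(d_F) = 3$, $f$ odd) reduces to $\OO_2 = \ZZ_2[\sqrt{d_F/4}]$, and $\Nr(a + b\sqrt{d_F/4}) = a^2 - b^2 (d_F/4)$ is $1$ or $1 - d_F/4$ modulo $8$ according to the parity of $b$, both classes being attained. Case (a) splits into $R_2 = \OO_2$ unramified (again classical) and $R_2 = \ZZ_2[2\omega]$ with $d_F$ odd, where a few small choices of $(a,b)$ in the expansion exhibit each of the four cosets of $\ZS_2$. Finally case (d) absorbs the three residual patterns $(v_2(f), v_2(d_F)) \in \{(0,2), (1,2), (2,0)\}$; in each of them direct substitution modulo $8$ shows that $\Nr(u)$ takes only the values $1$ and $5$ and covers both, whence $\Nr(R_2^*) = \ZS_2 \uplus 5\ZS_2$. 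The main obstacle is this last case: it is hidden behind the ``otherwise'' clause but actually decomposes into three genuinely distinct sub-patterns, each demanding its own mod-$8$ expansion and a separate verification that the unit condition on $u$ yields precisely the two expected classes.
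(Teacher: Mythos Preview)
Your approach is essentially identical to the paper's: expand $\Nr(a+bf\omega)=a^2+abfd_F+b^2f^2\Nr(\omega)$, use the sandwich $\ZS_p\subseteq\Nr(R_p^*)\subseteq\ZZ_p^*$, and run a case analysis on the $p$-adic valuations of $f$ and $d_F$; the paper's proof is in fact just this one formula followed by the sentence ``case by case discussion'', so your write-up is considerably more detailed than the original.

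One small inaccuracy to fix: the blanket claim ``$u\in R_p^*\iff a\in\ZZ_p^*$'' is not true in every sub-case. It holds whenever both correction terms $abfd_F$ and $b^2f^2\Nr(\omega)$ vanish modulo~$p$, which covers all your cases \emph{except} the pattern $(v_2(f),v_2(d_F))=(0,2)$ inside case~(d). There $R_2=\OO_2$, $\Nr(\omega)$ is odd, and $u=a+b\omega$ with $a$ even, $b$ odd is a perfectly good unit (its norm is $\equiv\Nr(\omega)\equiv1\pmod 4$). Fortunately those extra units still have norm $\equiv 1$ or $5\pmod 8$, so your conclusion survives; and your closing remark about a ``separate verification that the unit condition on $u$ yields precisely the two expected classes'' suggests you would catch this when writing out the details. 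Just drop the general equivalence and state the unit criterion case by case (it is simply $\Nr(u)\in\ZZ_p^*$).
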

\begin{proof}
We have $\ZS_p \subseteq \Nr(R_p^*) \subseteq \ZZ_p^*$ and the structure of $ \ZZ_p^* / \ZS_p$ is well known.
In particular, the square classes can be distinguished modulo $4p$.
Any unit $u \in R_p = \ZZ_p[f \omega]$ is of the form $u = x + yf\omega$ with $x,y \in \ZZ_p$ and
\[ \Nr(u) = (x+yf\omega) \overline{(x+yf\omega)} = x^2 + xyfd_F + y^2f^2 \frac{d_F^2 - d_F}{4} \in \ZZ_p^* \:. \]
The result now follows by a case by case discussion of the possible $p$-adic valuations of $f$ and $d_F$.
\end{proof}

\begin{corollary}\label{FreeRpLatticesOdd}
Let $L$ be a free unimodular hermitian $R_p$-lattice of rank $g$. Let $u \in \ZZ_p^*$ be a representative of $\det(L) \in \ZZ_p^* / \Nr(R_p^*)$.
If $p > 2$, then $L \cong \langle 1,\dots, 1, u \rangle$.
\end{corollary}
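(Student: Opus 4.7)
The approach is to reduce the claim to a rank-$2$ situation and iterate. Since $p>2$, Proposition~\ref{LatOGBasis} provides an orthogonal basis of $L$, so $L \cong \langle a_1,\dots,a_g\rangle$ for some $a_i\in\ZZ_p^*$ (unimodularity of each rank-$1$ summand forces $h(b_i,b_i)\in R_p^*\cap\QQ_p=\ZZ_p^*$). The corollary will then follow from the rank-$2$ assertion
\[
\langle a,b\rangle \;\cong\; \langle 1,ab\rangle \qquad \text{for all } a,b\in\ZZ_p^*. \tag{$\ast$}
\]
Indeed, applying $(\ast)$ successively to the first two entries contracts $\langle a_1,\dots,a_g\rangle$ to $\langle 1,\dots,1,\prod a_i\rangle$; since $\prod a_i \equiv u \pmod{\Nr(R_p^*)}$, rescaling the last basis vector by a suitable $n \in R_p^*$ replaces $\prod a_i$ by $u$ and finishes the proof.

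To establish $(\ast)$ I would seek a primitive vector $v \in \langle a,b\rangle$ with $h(v,v)=1$. Given such a $v$, the standard Gram--Schmidt procedure orthogonalizes the remainder of a basis (the correction coefficients lie in $R_p$ because $h(L,v)\subseteq R_p$ and $h(v,v)=1\in R_p^*$), yielding a splitting $\langle a,b\rangle = R_p v \perp \langle v\rangle^{\perp}$. The orthogonal complement is a rank-$1$ unimodular lattice $\langle c\rangle$ with $c \equiv ab \pmod{\Nr(R_p^*)}$, and is thus isometric to $\langle ab\rangle$ by rescaling. If either $a$ or $b$ lies in $\Nr(R_p^*)$, one may simply rescale the corresponding basis vector to obtain such a $v$. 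Otherwise Lemma~\ref{NormGroup} forces $\Nr(R_p^*)=\ZS_p$, so both $a$ and $b$ are non-squares; since $\ZZ_p^*/\ZS_p$ has order $2$, $a/b$ is a square and we may write $a=bc^2$ with $c\in\ZZ_p^*$.

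In the remaining case I look for $v=xb_1+yb_2$ with $x,y\in\ZZ_p\subseteq R_p$, so that the equation $h(v,v)=ax^2+by^2=1$ becomes $(cx)^2+y^2 = 1/b$. The key input is that for any odd prime $p$ every unit of $\ZZ_p$ is a sum of two squares: the sets $\{r^2 : r \in \FF_p\}$ and $\{1/b-s^2 : s \in \FF_p\}$ each have cardinality $(p+1)/2$ and must intersect, after which Hensel's lemma (applicable because $p\ne 2$) lifts the solution to $\ZZ_p$. Writing $1/b=r^2+s^2$ and setting $x=r/c$, $y=s$, at least one of $x,y$ must be a unit of $\ZZ_p$ (otherwise $1/b \in p^2\ZZ_p$, contradicting $1/b\in\ZZ_p^*$); since $R_p$ is local in this case, this means $v$ is primitive in $\langle a,b\rangle$. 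The heart of the argument, and the only point where the hypothesis $p>2$ is truly essential, is this sum-of-two-squares step.
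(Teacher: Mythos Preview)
Your proof is correct and follows essentially the same route as the paper's: both use Proposition~\ref{LatOGBasis} to get an orthogonal basis and then reduce to a rank-$2$ isometry over $\ZZ_p$. The paper normalizes each diagonal entry to lie in $\{1,\varepsilon\}$ (a fixed non-square) and then invokes the ``well-known'' isometry $\langle 1,1\rangle \cong \langle \varepsilon,\varepsilon\rangle$ of $\ZZ_p$-lattices to pair off the $\varepsilon$'s; you instead prove the equivalent statement $\langle a,b\rangle \cong \langle 1,ab\rangle$ directly, supplying the sum-of-two-squares argument that underlies the paper's ``well-known'' fact. So your version is a more self-contained unpacking of the same idea, with the minor organizational difference that you accumulate the product $\prod a_i$ step by step rather than first normalizing and then cancelling in pairs.
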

\begin{proof}
Let $\varepsilon \in \ZZ_p^*$ be a non-square.
Proposition \ref{LatOGBasis} shows that $L \cong \langle u_1,\dots,u_g \rangle$ with $u_i \in \{1,\varepsilon\}$.
It is well known that there exists some $U \in \GL_2(\ZZ_p)$ such that $\trsp{U} \diag(1,1) U = \diag(\varepsilon, \varepsilon)$.
Hence $\langle 1, 1 \rangle \cong \langle \varepsilon , \varepsilon \rangle$ and thus we can assume that $u_1= \ldots = u_{g-1} = 1$.
\end{proof}

\begin{proposition}\label{FreeRpLattices}
Let $L$ be a free, odd, unimodular hermitian $R_2$-lattice of rank $g\ge 2$.
Let $u \in \ZZ_2^*$ be a representative of $\det(L) \in \ZZ_2^* / \Nr(R_2^*)$.
\begin{enumerate}
\item If $R_2$ is maximal or $3 \in \Nr(R_2^*)$ or $7 \in \Nr(R_2^*)$, then $L \cong \langle 1,\dots, 1, u \rangle$.
\item If $g>2$ and the conditions in (1) are not satisfied then either
\[ L \cong \langle 1,\dots,1, u\rangle \quad  \mbox{or} \quad L \cong \langle 1,\dots,1,3,3,u \rangle \]
but not both.
\item If $g=2$ and the conditions in (1) are not satisfied then either $L \cong \langle 1, u \rangle$ or $u\equiv1,5 \pmod{\Nr(R_2^*)}$ and $L \cong \langle 3,3u \rangle$.
\end{enumerate}
\end{proposition}
\begin{proof}
If $R_2$ is maximal, the result follows from \cite[Theorem 7.1 and Proposition 10.4]{Jac}.
Suppose now $R_2$ is not maximal.
Proposition \ref{LatOGBasis} shows that $L \cong \langle u_1,\dots,u_g \rangle$ with $u_i \in \ZZ_2^*$.
If $3 \in \Nr(R_2^*)$ or $7 \in \Nr(R_2^*)$ we may assume that $u_i \in \{1,5\}$ for all $i$.
As in the proof of Corollary \ref{FreeRpLatticesOdd} we conclude that $u_1 = \dots = u_{g-1} = 1$. The first assertion follows.\\
Suppose now $3,7 \notin \Nr(R_2^*)$ and $g \ge 3$.
\cite[Theorem 93:16]{OMe} yields some $T \in \GL_g(\ZZ_2)$ and $e \in \{1,3\}$ such that
\[ \trsp{T} \diag(u_1,\dots,u_g) T =  \diag(1,\dots, 1, e, e, \prod_i {u_i}) \:. \]
Hence $L \cong \langle 1,\dots,1,e,e,u \rangle$.
It remains to show that $M:= \langle 1,\dots,1,1,1,u \rangle$ and $N:= \langle 1,\dots,1,3,3,u \rangle$ are not isometric.
Let $V$ be the ambient hermitian space of $M$ and $N$.
Let $X$ and $Y$ be the $\ZZ_2$-lattices $M$ and $N$ equipped with the bilinear form $V \times V \to \QQ_2, (x,y) \mapsto \Tr(h(x,y)/2)$.
Lemma \ref{NormGroup} shows that $R_2 = \ZZ_2 \oplus \alpha \ZZ_2$ for some $\alpha \in R_2$ with $\Tr(\alpha) = 0$  and $n:= \Nr(\alpha) \in 4 \ZZ_2$.
Hence $X = X_0 \perp X_1$ where $X_0$ and $X_1$ are free with Gram matrices $\diag(1,\dots,1, u)$ and $\diag(n,\dots,n,un)$.
Similarly $Y = Y_0 \perp Y_1$ where $Y_0$ and $Y_1$ are free with Gram matrices $\diag(1,\dots,1, 3,3,u)$ and $\diag(n,\dots,n,3n,3n,un)$.
Suppose $M$ and $N$ are isometric hermitian $R_2$-lattices. Then $X$ and $Y$ are isometric bilinear $\ZZ_2$-lattices.
By \cite[Theorem 93:29 (ii)]{OMe}, this implies that $X_0$ is isometric to $Y_0$, which is impossible since the two ambient quadratic spaces have different Hasse-Witt invariants.
The case $g=2$ follows along the same lines.
\end{proof}

The above proof shows that the possible cases in part (2) and (3) of Proposition~\ref{FreeRpLattices} can be distinguished as follows.

\begin{remark}\label{FreeRpLattices2}
Let $L \cong \langle u_1,\dots,u_g \rangle$ where $u_i \in \ZZ_2^*$ and $g \ge 2$. Write $u = \prod_i u_i$.
Suppose that $R_2$ is not maximal and that $3,7 \notin \Nr(R_2^*)$.
Then $L \cong \langle 1,\dots,1, u \rangle $ if and only if $\prod_{i<j} (u_i, u_j)_2 = 1$ where $(\_,\_)_2$ denotes the Hilbert-Symbol of $\QQ_2$.
\end{remark}

We are now ready to prove the main result of this section.

\begin{proof}[Proof of Theorem~\ref{ellid}.]
The first assertion is the Gram-Schmidt process. 
For the remainder let $\mu \in \NN$ be a representative of $\det(V,h) \in \QQ^*/\Nr(F^*)$.
Let $(V', h')$ be a hermitian space over $F$ with Gram matrix $\mu \cdot I_g$.
If $g$ is odd or $\det(V,h) \in \Nr(F^*)$, then $(V,h)$ and $(V', h')$ have the same rank, the same determinant and the same signature.
Hence they are isometric by Landherr's Theorem. 
Thus $(V,h)$ contains a free $R$-lattice $M \cong \langle \mu,\dots,\mu \rangle$.
Let $m \in \NN$ such that $L' := m M \subseteq L$. Then $L' \cong \langle \ell,\dots, \ell \rangle$ where $ \ell =m^2 \mu$.
Conversely, if such a lattice $L'$ exists and $g$ is even, then $\det(V,h) = \ell^g \in \Nr(F^*)$.
This proves the second assertion.\\
Suppose now $L$ has a sublattice $L'$ as in (3).
For any prime divisor $p$ of $a$, we have
\[ L'_p \subseteq L_p \subseteq L_p^\# \subseteq (L'_p)^\# = L'_p \:. \]
Hence $L_p = L'_p \cong \langle \ell,\dots,\ell \rangle$ and if $g$ is even, then $\det(L_p, h) = \ell^g \in \Nr(R_p^*)$.
Finally suppose that the three conditions of part $(3)$ hold.
If $g$ is even and $a$ is odd, set $r = 1$.
If $g$ and $a$ are both even let $r \in \NN$ such that $r/\ell_2 \in \Nr(R_2^*)$.
If $g$ is odd, we also choose some integer $r$, but much more carefully.
For all $p \mid a$ the assumption that $L_p$ is free and unimodular implies $\det(L_p, h) \in \ZZ_p^*$. 
Hence we may assume that the representative $\mu \in \NN$ of $\det(V,h)$ from above is coprime to $a$.
Dirichlet's theorem on primes in arithmetic progressions yields some prime $r$ such that
\begin{align*}
r &\equiv \ell_2 \pmod{\Nr(R_2^*)} \text{ if } 2 \mid a, \\
r &\equiv \mu \pmod{\Nr(R_p^*)} \text{ for all } 2 \ne p \mid a, \\
r &\equiv \mu \pmod{\Nr(F_p^*)} \text{ for all } p \mid \mu d_F \text{ and } p \nmid a.
\end{align*}
Notice that if $2 \mid a$, then $\ell_2 \equiv \ell_2^g \equiv \mu \pmod{\Nr(F_2^*)}$ and for $p \nmid r a\mu d_F$ we have $r/\mu \in \ZZ_p^* \subseteq \Nr(F_p^*)$.
Hence $r/\mu \in \Nr(F_p^*)$ for all primes $p \ne r$.
The product formula for norm symbols and Hasse's norm theorem imply that $r/\mu \in \Nr(F^*)$.\\
So whether $g$ is even or odd, we have $r^g / \mu \in \Nr(F^*)$.
As in part (2) it follows that $(V,h)$ has a Gram matrix $r \cdot I_g$.
Thus $(V,h)$ contains a full $R$-lattice $M \cong \langle r,\dots, r \rangle$.
Corollary \ref{FreeRpLatticesOdd}, condition $(3c)$ and the choice of $r$ show that for $p \mid a$ there exists some local isometry $\sigma_p \colon M_p \to L_p$.
Since $M$ has an orthogonal basis, we may assume that $\det(\sigma_p) = 1$.
Strong approximation yields some  $\sigma \in \SU(V,h)$ such that $\sigma(M)_p = L_p$ for all $p \mid a$, cf. \cite{Approx}.
Hence there exists an integer $b$ coprime to $a$ such that $b\sigma(M) \subseteq L$.
Then $L':= b \sigma(M) \cong \langle \ell,\dots,\ell \rangle$ with $\ell = b^2r$.
This proves the third assertion.
\end{proof}

\begin{remark}\label{ellidcheck}
Let $L$ be a unimodular $R$-lattice in $(V, h)$ given by a pseudo basis $L = \bigoplus_{i=1}^g \fa_i x_i$.
Then the conditions in part (3) of Theorem~\ref{ellid} can be checked as follows.
\begin{enumerate}
\item
The $R_p$-module $L_p$ is free if and only $\fa_i R_p$ is principal for all $i$.
Since $R$ is Gorenstein, the latter condition holds if and only if the conductor of $R$ and the conductors of the multiplicator rings of all $\fa_i$ have the same $p$-adic valuation.
In particular, this holds if $R_p$ is maximal.
\item Let $p>2$ be a prime such that $p \mid \gcd(a,f)$ and suppose $L_p$ is free. For $1 \le i \le g $ pick some $a_i \in \fa_i$ such that $a_i R_p = \fa_i R_p$.
Then $L_p = \bigoplus_i R_p b_i$ with $b_i = a_i x_i$ and thus $\det(L_p, h) = \det(\Gram(b))$.
This can be used to check the condition (3c) as the norm group $\Nr(R_p^*)$ has been worked out in Lemma~\ref{NormGroup}.
\item Suppose $2 \mid a$, $L_2$ is free and $g$ is odd.
The existence of $\ell_2$ is guaranteed whenever $R_2$ is maximal or $3 \in \Nr(R_2^*)$ or $7 \in \Nr(R_2^*)$ since in these cases all free unimodular $R_2$-lattices in $(V_p,h)$ of determinant $\det(L_p, h)$ are isometric, cf. Proposition~\ref{FreeRpLattices}.
So suppose we are not in this case. 
Since the square classes of $\ZZ_2^*$ are represented by $\{1,3,5,7\}$, there are at most 4 possibilities for $\ell_2$.
As before we obtain an $R_2$-basis of $L_2$.
The proof of Proposition~\ref{LatOGBasis} yields an orthogonal basis of $L_2$ and thus $u_1,\dots,u_g \in \{1,3,5,7\}$ such that $L_2 \cong \langle u_1, \dots, u_g\rangle$.
By Remark~\ref{FreeRpLattices2} we have $L_2 \cong \langle \ell_2,\dots,\ell_2 \rangle$ if and only if $\ell_2 \equiv \prod_i u_i \pmod{\Nr(R_2^*)}$ and $\prod_{i<j} (u_i, u_j)_2 = (\ell_2, \ell_2)_2^{(g-1)/2}$.
This gives an effective method to find the element $\ell_2$ or to show that it does not exist.
\item Suppose $2 \mid a$, $L_2$ is free and $g$ is even.
If $2 \nmid f d_F$ then $L_2 \cong \langle 1,\dots, 1 \rangle$ by \cite[Proposition~10.4]{Jac}.
So we may assume that $2 \mid f d_F$ and we compute a Gram matrix $G$ of $L_2$.
The existence of $\ell_2$ implies that $\det(G) \in \Nr(R_2^*)$ and $L_2$ is odd.
The first condition is readily checked and the second holds if and only if some diagonal entry of $G$ lies in $\ZZ_2^*$.
Suppose these conditions both hold.
As in in the case of odd ranks, the existence of $\ell_2$ is now guaranteed whenever $R_2$ is maximal or $3 \in \Nr(R_2^*)$ or $7 \in \Nr(R_2^*)$.
In the other cases, the proof of Proposition~\ref{LatOGBasis} shows how to compute $u_1,\dots,u_g \in \{1,3,5,7\}$ such that $L_2 \cong \langle u_1, \dots, u_g\rangle$.
Then $L_2 \cong \langle \ell_2, \dots, \ell_2 \rangle$ if and only if $\prod_{i<j} (u_i, u_j)_2 = (\ell_2, \ell_2)_2^{g/2}$. This again yields an effective method to decide if $\ell_2 \in \{1,3,5,7\}$ exists.
\end{enumerate}
\end{remark}

\begin{example} \label{ex:noell}
Let $F = \QQ(\sqrt{-10})$ and let $\fp$ be the (non-principal) prime ideal of $\OO$ over $2$.
Equip $F^2$ with the hermitian form $h$ induced by $\diag(1,2)$.
Then 
\[ L:= \fp \cdot (2,0) \oplus \frac{1}{4} \OO \cdot (\sqrt{-10}+2, 1) \]
is a unimodular (and projective) $\OO$-lattice in $(F^2, h)$ but $\det(F^2, h) = 2$ is not a norm in $F$.
\end{example}

\begin{example}
Let $R = \ZZ[2i]$ be the order of conductor $2$ in $\QQ(i)$. Let $L$ be the free hermitian $R$-lattice with Gram matrix
\[G =
\begin{pmatrix}
      3  &    2i   & 2i - 1 \\
    -2i  &       3 &   2i + 1 \\
 -2i - 1 & -2i + 1 &        3
\end{pmatrix} \in R^{3 \times 3} \:.
\]
The determinant of $G$ is $1$, so $L$ is unimodular.
We find that $L_2 \cong \langle 1,3,3 \rangle$ and $\Nr(R^*) = \ZS_2 \uplus 5 \ZS_2$.
Now $(1,3)_2^2 \cdot (3,3)_2 = -1$ but $(1, 1)_2^{3} = (5,5)_2^3 = +1$.
Hence $L_2 \not\cong \langle \ell_2,\ell_2,\ell_2 \rangle$ for any $\ell_2 \in \ZZ_2^*$.
In particular, $L$ does not contain a free $R$-sublattice $L' \cong \langle \ell, \ell, \ell \rangle$ for any odd integer $\ell$.
\end{example}

\section{The description of polarized abelian varieties in terms of lattices}
We set up the essential tools to introduce the equivalence of categories which allows us to interpret certain polarized abelian varieties as hermitian lattices.
\subsection{The equivalence of categories}  Let $\mathcal{C}$ be an abelian category, let $E$ be an object of $\mathcal{C}$ and let $R$ be a ring. Fix a morphism $\rho\colon R\rightarrow \End(E)$. Let $L$ be a finitely presented left $R$-module and let
$$R^m\xrightarrow{\varphi} R^n \rightarrow L\rightarrow 0$$ be a finite presentation. We identity  the map $\varphi\in M_{n,m}(R)$ with its image in $M_{n,m}(\End(E))$ by the map induced by $\rho$, where $M_{n,m}(R)$ denotes the ring of matrices with $n$ rows and $m$ columns with coefficients in $R$. It defines a morphism $$E^n\xrightarrow{\trsp \varphi}E^m.$$
The object $\ker(\trsp \varphi)$ does not depend on the presentation of $L$ and \cite[III.Sec.8.1]{Ser} uses this to define the functor $\F$ as $\F(L)=\ker(\trsp \varphi)$ on objects.  Let us look now on what $\F$ does on arrows. Let $f:L_1\rightarrow L_2$ be a morphism of $R$-modules. Given finite presentations $R^{m_i}\xrightarrow{\varphi_i} R^{n_i} \rightarrow L_i\rightarrow 0$ of $L_i$ we can lift $f$ to a commutative diagram of $R$-modules as follows.
\[
\xymatrix{R^{m_1} \ar[r]^{\varphi_1} \ar[d]^{G} & R^{n_1} \ar[d]^{F}\ar[r] & L_1 \ar[d]^f \ar[r] & 0  \\
R^{m_2} \ar[r]^{\varphi_2} & R^{n_2} \ar[r] & L_2 \ar[r] & 0 .}
\]
We can define $\F(f)$ as the map induced by $\trsp F$ by restriction to $\ker(\trsp \varphi_2)\rightarrow\ker(\trsp\varphi_1)$ 
\[
\xymatrix{E^{m_2}  \ar[d]^{\trsp G} & E^{n_2} \ar[l]^{\trsp \varphi_2}\ar[d]^{\trsp F} & \ker(\trsp \varphi_2) \ar[d]^{\F(f)} \ar[l]  & 0 \ar[l] \\
E^{m_1}  & E^{n_1}\ar[l]^{\trsp \varphi_1}& \ker(\trsp \varphi_1) \ar[l] & 0. \ar[l]}
\]
We now focus on the case where $\mathcal{C}$ is the category of group schemes over $\Fq$ (with $\FF_q$-morphisms), $E/\FF_q$ is an ordinary elliptic curve and $R=\End(E)$. The ring $R$ is an order in an imaginary quadratic field $F=\Frac R$. Denote by $\pi\in R$ the Frobenius endomorphism of $E.$
Let $\RLat$ be the category of finitely presented torsion-free left $R$-modules (this is the category of $R$-lattices from Section~\ref{sec:hermitianlattices}) and $\Ab_E$ be the sub-category of $\mathcal{C}$ of abelian varieties $\FF_q$-isogenous to a power of $E.$
\begin{theorem}
Let $E$ be an ordinary elliptic curve over $\Fq$. Then $\F$ defines an equivalence of categories between $(\RLat)^{\text{opp}}$, the opposite category of $\RLat$, and $\Ab_E$ if, and only if, $R=\ZZ[\pi]$. Moreover the functor $\F$ is exact.
\end{theorem}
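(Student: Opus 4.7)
The plan is to build a candidate quasi-inverse $T\colon \Ab_E \to (\RLat)^{\mathrm{opp}}$ defined on objects by $T(A)=\Hom(A,E)$, where the left $R$-module structure is induced by post-composition with $R=\End(E)$. The theorem then splits into three pieces: (a) checking $\F$ and $T$ are mutually quasi-inverse when $R=\ZZ[\pi]$, (b) checking exactness of $\F$, and (c) showing the necessity of the condition $R=\ZZ[\pi]$.

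For part (a), I would start with full faithfulness. Given a finite presentation $R^m\xrightarrow{\varphi} R^n\to L\to 0$, applying $\Hom_{\Ab_E}(-,E)$ to the defining exact sequence $0\to\F(L)\to E^n\xrightarrow{\trsp\varphi} E^m$ produces, after identifying $\Hom(E^k,E)=R^k$ (this uses $R=\End(E)$ precisely), the sequence $R^m\xrightarrow{\varphi} R^n\to T(\F(L))\to 0$, hence $T(\F(L))\cong L$ naturally. The identification $\Hom(E^k,E)=R^k$ is the key point and it only holds with the correct $\End(E)$. Full faithfulness on morphisms then follows from the presentation diagrams by functoriality.

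For essential surjectivity, given $A\in\Ab_E$, the key claim is that the $R$-module $L:=\Hom(A,E)$ is finitely generated and torsion-free of rank $g$ (where $A$ is isogenous to $E^g$), and that the evaluation morphism $\mathrm{ev}\colon A\to \Hom_R(L,E)=\F(L)$ is an isomorphism. Finite generation and rank follow from the structure of isogenies between $A$ and $E^g$; injectivity of $\mathrm{ev}$ follows because $L$ separates points (there is always an isogeny $A\to E^g$); and surjectivity is checked on Tate modules at primes $\ell\ne p$ and on the Dieudonné module at $p$, using that $E$ ordinary and $R=\ZZ[\pi,q/\pi]$ ensure the $R$-action is rich enough to realize the full target. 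This is the main obstacle: one has to verify that the natural evaluation is an isomorphism by a local-at-$\ell$ analysis, and the hypothesis $R=\ZZ[\pi]$ is exactly what makes the $p$-local verification work, since the Frobenius action must generate the $R$-action on the Dieudonné module.

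For part (b), exactness: given $0\to L_1\to L_2\to L_3\to 0$ in $\RLat$, one constructs compatible finite free presentations $R^{m_i}\to R^{n_i}\to L_i\to 0$ (using that all $L_i$ are torsion-free and finitely presented) assembled into a $3\times 3$ diagram. Applying the construction produces a $3\times 3$ diagram of powers of $E$ and morphisms $\trsp\varphi_i$; exactness of $\F$ then reduces to a snake/kernel computation, verifiable after passing to Tate modules (which are faithful functors on ordinary abelian varieties). Left exactness of $\F$ is automatic from the kernel definition; right exactness uses the quasi-inverse $T$ established in (a).

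For part (c), if $R\supsetneq\ZZ[\pi]$, then by the general theory of ordinary isogeny classes there exists an elliptic curve $E'\in\Ab_E$ with $\End(E')=\ZZ[\pi]$, strictly smaller than $R$. If $E'\cong\F(L)$ for some $L\in\RLat$, then the chain of natural ring maps gives $\End(E')\supseteq \End_R(L)^{\mathrm{opp}}\supseteq R$ (the latter because $R$ acts on $L$), contradicting $\End(E')=\ZZ[\pi]\subsetneq R$. Hence $\F$ is not essentially surjective, proving necessity.
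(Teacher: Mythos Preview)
The paper does not give its own proof of this theorem; it simply refers the reader to \cite{JKP+18}. Your choice of quasi-inverse $T(A)=\Hom(A,E)$ is exactly the functor the paper attributes to \cite{JKP+18} in the introduction, so your overall strategy matches the intended one, and your sketch is broadly correct.

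A few comparisons with what the paper does record. For part~(c), the paragraph immediately following the theorem essentially contains your necessity argument: when $R \supsetneq \ZZ[\pi]$, the image of $\F$ consists only of products of elliptic curves $E_i$ whose endomorphism ring contains $R$ (the paper notes in the subsequent remark that for a rank-one module $I$ one has $\End(\F(I)) \cong (I:I) \supseteq R$), so the elliptic curve $E'$ with $\End(E')=\ZZ[\pi]$, which always exists in the isogeny class by Waterhouse, is missed. One small caution on part~(a): your phrasing that ``$R=\ZZ[\pi]$ is exactly what makes the $p$-local verification work'' slightly mislocates the role of the hypothesis. The minimality of $R$ is what guarantees that \emph{every} abelian variety in the isogeny class is reached, and this is a statement about the whole isogeny class rather than something special to the $p$-divisible group; ordinarity is what makes the Dieudonn\'e-module side tractable, while $R=\ZZ[\pi]$ is what forces essential surjectivity globally.
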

The reader can refer to \cite[Theorem 7.6]{JKP+18} and \cite[Theorem 4.4]{JKP+18} for proofs.
\begin{remark}
Serre also introduces another functor $M\mapsto M\otimes E\colon = \Coker \varphi$ which is further studied in \cite[Appendice]{Lau}, \cite[section 8]{JKP+18} or \cite{amir}. This functor is covariant but not exact. We also prefer to use $\F$ since the theory is settled for an arbitrary order $R$ whereas Serre only develops it for the maximal order. In general there is no easy way to compare the two functors if the $R$-module is not projective. Notice that the image of a projective $R$-module $L \in (\RLat)^{\text{opp}}$ by $\F$ is an abelian variety $A$ isomorphic to a product of elliptic curves $E_i \sim E$ such that $\End (E_i)=R$ for all $1\leq i\leq g$. Indeed, for an $R$-ideal $I_i$ such that $E_i =\F(I_i)$, $\End(E_i)\simeq (I_i\colon I_i)$ and since $R$ is Gorenstein, $I_i$  is invertible if and only if $(I_i\colon I_i)=R$  \cite[Prop.2.1]{Mar}. 
\end{remark}

Notice that if $E$ is such that $R=\End(E) \supset \ZZ[\pi]$ then the image of $\F$ consists of the abelian varieties isomorphic to  products of elliptic curves $E_i$ such that the conductor of $\End(E_i)$ divides the conductor of $\End(E)$, as subrings of the maximal order of $F=\Frac(R)$ (see \cite[Theorem 7.5]{JKP+18}). However, if $R\neq \End(E)$, it may occur that $\F(L)$ is not even an abelian variety (see \cite[Remark 4.6]{JKP+18}). 

Notice that, given an ordinary elliptic curve $E/\Fq$ with Frobenius endomorphism $\pi$, for each order $R$ containing $\ZZ[\pi]$, there exists an elliptic curve over $\FF_q$, isogenous to $E$, with endomorphism ring isomorphic to $R$ (see \cite[Theorem 4.2]{Wat}). Hence, in what follows, we will always assume that the assumption $R=\ZZ[\pi]=\End(E)$ is satisfied.
Also notice that the main result of \cite{JKP+18} is more general and can also deal with certain supersingular elliptic curves.\\

\subsection{Polarizations}

Let $A$ be an abelian variety over $\Fq$ isogenous to a power of an elliptic curve $E$ such that $R=\End(E)=\ZZ[\pi]$. Let us recall that a polarization is an isogeny $\phi_\mathcal{L}\colon A\rightarrow \widehat{A}$ with $\mathcal{L}$ an ample line bundle. Let $L$ be a $R$-lattice. As in  \cite[Sec.4.3]{JKP+18},  we denote $L^*$ the $R$-lattice $\Hom_R(L,R)$ with the action of $r\in R$ on $\alpha \in L^*$ given by $r . \alpha(x) =\alpha(\bar{r} x)$. We want to translate  polarizations in the category of $R$-lattices.

\begin{theorem} \label{th:equivalencepol}
Let $E/\FF_q$ be an ordinary elliptic curve with $R=\End(E) = \ZZ[\pi]$ where $\pi$ is the Frobenius endomorphism of $E$. Let $F=\Frac(R)$. The functor $\F$ defines an equivalence of categories between polarized abelian varieties $A $ which are isogenous to $E^g$ and positive definite hermitian $R$-lattices $(L,h)$ of rank $g$ where $h(x,y)=\Lambda(x)(y)$ with $\Lambda : L \otimes F=V \to V^*$ a linear map such that $\Lambda^{-1}(L^*) \subset L$.  Moreover the degree of the polarization is equal to $[L:\Lambda^{-1}(L^*)]$. 
\end{theorem}

Notice also that, since $\F$ is exact, a hermitian lattice $(L,h)$ is indecomposable (see Definition~\ref{def:decomp}) if and only if the corresponding polarized abelian variety $(A,a)$ is indecomposable (i.e. $(A,a)$ is not the product of two non-trivial polarized abelian sub-varieties).

\begin{remark}
In \cite[Chap.III.Sec.8]{Ser}, Serre uses the  functor $M \to M \otimes E$ to get Theorem~\ref{th:equivalencepol} under the hypothesis that $R$ is the maximal order. In another direction, \cite[Th.A]{amir} gets a similar result for arbitrary $R$ (not necessarily quadratic) but only for projective modules. 
\end{remark}
Before giving various lemmas which will culminate in the proof of Theorem~\ref{th:equivalencepol}, in order to stick with the terminology of Section~\ref{sec:hermitianlattices} and to lead to an algorithmic version of the theorem, we now give its translation in terms of the dual lattice $L^{\#}=\{x\in V, h(x,L)\subseteq R\}.$
\begin{lemma}\label{lem:iso}
With the notation above, $\lambda:= \Lambda^{-1}$  is an isomorphism between the $R$-modules $\Ld$ and $L\s$.
\end{lemma}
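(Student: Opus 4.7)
My plan is to check the two inclusions $\lambda(L^*) \subseteq L^{\#}$ and $\Lambda(L^{\#}) \subseteq L^*$ directly from the defining relation $\Lambda(x)(y) = h(x,y)$, then invoke the fact that $\Lambda$ and $\lambda$ are already known to be inverse $F$-linear isomorphisms between $V$ and $V^*$ to conclude that $\lambda$ is a bijection between $L^*$ and $L^{\#}$.

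For the first inclusion, I would take an arbitrary $\alpha \in L^*$ and set $x := \lambda(\alpha) \in V$. By definition of $\lambda$, we have $h(x, y) = \Lambda(x)(y) = \alpha(y)$ for every $y \in V$. Restricting to $y \in L$ this becomes $h(x, y) = \alpha(y) \in R$, since $\alpha \colon L \to R$ is an $R$-module homomorphism. This is exactly the condition $x \in L^{\#}$, so $\lambda(L^*) \subseteq L^{\#}$.

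For the reverse inclusion, I would take $x \in L^{\#}$ and consider $\alpha := \Lambda(x) \in V^*$. By the definition of the dual lattice, $\alpha(y) = h(x,y) \in R$ for every $y \in L$, so $\alpha|_L$ maps $L$ into $R$. The twisted $R$-action $r \cdot \alpha(y) = \alpha(\bar r y)$ that was introduced earlier on $L^* = \Hom_R(L,R)$ is tailored precisely to absorb the sesquilinearity of $h$ in its second slot, and with this twist $\alpha|_L$ sits in $L^*$. Combining the two inclusions and using that $\lambda \circ \Lambda = \mathrm{id}_V$ and $\Lambda \circ \lambda = \mathrm{id}_{V^*}$ then yields the bijection $\lambda \colon L^* \xrightarrow{\sim} L^{\#}$.

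Finally, the $R$-linearity of this restriction is inherited from the $R$-linearity of $\Lambda$ on the ambient spaces (using the standard action on $V$ and the twisted action on $V^*$), which is itself a formal consequence of the sesquilinearity of $h$. The only genuine subtlety — and the closest thing to an obstacle — is the bookkeeping that verifies how the conjugation in the twisted $R$-action on $L^*$ exactly cancels the conjugation appearing in the second slot of $h$, so that $\lambda$ is honestly an $R$-module morphism rather than merely an additive bijection.
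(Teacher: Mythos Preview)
Your proof is correct and follows essentially the same approach as the paper: both arguments rest on the identity $h(x,y)=\Lambda(x)(y)$ and use it to identify $\lambda(L^*)$ with $L^{\#}$. The paper compresses your two inclusions into a single if-and-only-if chain (``$x\in\lambda(L^*)\iff\Lambda(x)\in L^*\iff h(x,L)\subseteq R\iff x\in L^{\#}$''), and omits the $R$-linearity check you include, but the substance is the same.
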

\begin{proof}
Notice that $x \in V$ belongs to $\textrm{Im} \lambda$ if and only if $\Lambda(x)\in L^*$ which is the case if and only if $\forall y\in L, h(x,y)=\Lambda(x)(y)\in R$. This means, by definition, $x\in L^\#.$ Hence, $L^\#=\textrm{Im} \lambda$. 
\end{proof}
Under this isomorphism, one obtains a more natural functor as follows.
\begin{corollary}
 \label{cor:equivalencepol}
Let $E/\FF_q$ be an ordinary elliptic curve with $R=\End(E) = \ZZ[\pi]$ where $\pi$ is the Frobenius endomorphism of $E$. There is an equivalence of categories between polarized abelian varieties $A $ which are isogenous to $E^g$ and positive definite hermitian $R$-lattices $(L,h)$ of rank $g$ such that $L^\#$ is integral. Moreover, the degree of the polarization is equal to $[L:\Ld]$.
\end{corollary}
Hence, the isomorphism  classes of principally polarized abelian varieties in the isogeny class $E^g$ correspond to the isometry classes of unimodular positive definite hermitian $R$-lattices. \\

The rest of the section is devoted to the proof of Theorem~\ref{th:equivalencepol}, which will use several lemmas.\\

 In \cite[Th.4.7]{JKP+18}, it is shown that the dual of $A=\F(L)$ is functorially isomorphic to $\F(L^*)$. Hence we can  relate polarizations and injective morphisms from $L^* \to L$. Now,
 a  morphism $\lambda \colon L^*\rightarrow L$ also induces a sesquilinear form $$H_{\lambda} \colon L^*\times L^* \rightarrow R, (\alpha,\beta)\mapsto \alpha \lambda \beta.$$
We first prove the following lemma.
\begin{lemma} \label{lem:sym}
The form $H_{\lambda}$ is hermitian if and only if there exists a line bundle $\LL$ on $A=\F(L)$ such that $\F(\lambda)= \phi_{\LL}$.
\end{lemma}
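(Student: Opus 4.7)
The plan is to reduce the statement to the classical fact that an isogeny $\mu\colon A\to\widehat A$ comes from a line bundle (i.e. equals $\phi_{\LL}$ for some $\LL$) if and only if $\mu$ is \emph{symmetric}, meaning $\widehat{\mu}\circ\kappa_A=\mu$ where $\kappa_A\colon A\xrightarrow{\sim}\widehat{\widehat A}$ is the canonical biduality isomorphism. Granting this, everything boils down to showing that the contravariant equivalence $\F$ translates symmetry of $\F(\lambda)$ into the hermitian property of $H_{\lambda}$.

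First, I would record the compatibilities of $\F$ with duality that are already in the paper/\cite{JKP+18}: namely that $\F(L^*)$ is canonically identified with $\widehat{\F(L)}$, and that the biduality $L\xrightarrow{\sim}L^{**}$ given by $\iota_L(x)(\alpha)=\overline{\alpha(x)}$ is taken by $\F$ to the canonical isomorphism $\kappa_A\colon A\xrightarrow{\sim}\widehat{\widehat A}$. These are the statements that make the transpose of an arrow correspond, through $\F$, to the dual isogeny. Given $\lambda\colon L^*\to L$, let $\lambda^{t}\colon L^*\to L^{**}$ denote its $R$-linear transpose $\lambda^{t}(\alpha)=\alpha\circ\lambda$, and let $\lambda^{\vee}\colon L^*\to L$ be the unique morphism such that $\iota_{L}\circ\lambda^{\vee}=\lambda^{t}$. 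Under the identifications above, $\F(\lambda^{\vee})$ is precisely $\widehat{\F(\lambda)}\circ\kappa_A$. Consequently $\F(\lambda)$ is symmetric if and only if $\lambda=\lambda^{\vee}$.

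Finally I would unfold the identity $\lambda=\lambda^{\vee}$. By definition of $\lambda^{\vee}$, for all $\alpha,\beta\in L^*$ one has
\[
\beta\bigl(\lambda^{\vee}(\alpha)\bigr)\;=\;\iota_{L}\bigl(\lambda^{\vee}(\alpha)\bigr)(\beta)^{-}\;\text{(via $\overline{\cdot}$)}\;=\;\overline{\lambda^{t}(\alpha)(\beta)}\;=\;\overline{\alpha(\lambda\beta)},
\]
so $\lambda=\lambda^{\vee}$ is equivalent to $\beta(\lambda\alpha)=\overline{\alpha(\lambda\beta)}$ for all $\alpha,\beta\in L^*$, which is exactly $H_{\lambda}(\beta,\alpha)=\overline{H_{\lambda}(\alpha,\beta)}$, i.e. the hermitian property. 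Combining with the classical criterion $\F(\lambda)=\phi_{\LL}\iff\F(\lambda)$ symmetric, this proves both directions.

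The main obstacle is not the algebra—which is a routine unravelling of definitions—but making the compatibility of $\F$ with the dual abelian variety and with biduality completely precise, including the sign/conjugation conventions in the $R$-action on $L^*=\Hom_R(L,R)$ (where $(r\cdot\alpha)(x)=\alpha(\bar r x)$). One must check that, with the chosen biduality $\iota_L(x)(\alpha)=\overline{\alpha(x)}$, the functoriality of $\F$ indeed sends the transpose of a morphism to the dual isogeny composed with $\kappa$, and not to its negative or to its conjugate. This is a bookkeeping step that should be quoted carefully from \cite[\S4]{JKP+18}.
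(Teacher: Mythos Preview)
Your approach is natural and is exactly the route the authors say they would have liked to take---but they explicitly flag the step you are relying on as unproven. In Remark~\ref{rem:bidual} (immediately after the lemma) the paper notes that the argument via biduality would go through provided one knows that $\F$ sends the evaluation map $ev\colon L\to L^{**}$ to the canonical isomorphism $\kappa_A\colon A\to\widehat{\widehat A}$, and then states: ``However this equality is not clear to us.'' Your proposal leans precisely on this compatibility (together with the companion statement that $\F$ sends the transpose of a morphism to the dual isogeny composed with $\kappa_A$), and you say it ``should be quoted carefully from \cite[\S4]{JKP+18}.'' As far as the authors are concerned, it cannot: \cite{JKP+18} gives the functorial identification $\F(L^*)\cong\widehat{\F(L)}$, but the identification of $\F(ev)$ with the geometric biduality $\kappa_A$ is a separate and more delicate compatibility, and there is no reference that settles it. So as written your argument has a genuine gap at the place you yourself identify as the main obstacle.

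The paper's proof sidesteps this issue entirely. Instead of invoking biduality, it pulls the problem back along an isogeny $f\colon E^g\to A$ coming from an inclusion $L\hookrightarrow R^g$, and observes that both sides of the equivalence are preserved under this pullback (for the line-bundle side this uses a Weil-pairing argument from Mumford; for the hermitian side it is a change of basis over $F$). On $E^g$ one has the concrete product polarization $a_0$, the Rosati involution on $\End(E^g)=M_g(R)$ is $M\mapsto {}^t\bar M$, and the criterion for $a=\phi_{\LL}$ becomes the matrix identity ${}^t\bar M=M$ via \cite[Prop.~17.2]{milne}. The same matrix identity is visibly equivalent to $H_\lambda$ being hermitian. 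Thus the paper trades the abstract biduality compatibility for an explicit computation in the split case $R^g$, which is why its argument goes through without the missing ingredient.
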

\begin{proof}
Let $f : E^g \to A$ be an isogeny induced by an inclusion $\iota : L \to N \simeq R^g$. Observe that the isogeny $a=\F(\lambda)$  is of the form $\phi_{\LL}$ if and only if $a'=\hat{f} a f$ is of the form $\phi_{\LL'}$ for a line bundle 
$\LL'$ on $E^g$. The direct implication is obvious since $\hat{f} a f = \phi_{f^* \LL}$. As for the other direction, let $\ell$ be any prime distinct from the characteristic of $\FF_q$. By \cite[Th.2, p.188]{mumford-book}, the form $e_{\ell}(x,a' y)$ is  skew-symmetric and therefore the form $e_{\ell}(x,a y)$ is as well. Still using  \cite[Th.2]{mumford-book}, we then have that there exists a line bundle $\mathcal{M}$ such that $2 a = \phi_{\mathcal{M}}$ and \cite[Th.3,p.231]{mumford-book} shows that there exists $\LL$ such that $\mathcal{M} \simeq \LL^2$ hence $a=\phi_{\LL}$.

 Now, denote $\lambda'=\iota \lambda \iota^*$ so that $\F(\lambda')=a'$. Similarly, the form $H_{\lambda}$ is hermitian if and only if the form $H_{\lambda'}$ is. This equivalence can be checked on the $F$-vector spaces $FL$ and $F N$ where $\iota$ is an isomorphism. There we have that $H_{\lambda'}(\alpha',\beta')= \alpha' (\iota {\lambda} \iota^*) \beta' = 
H_{\lambda}(\alpha' \iota,\beta' \iota)$, so it is only a change of basis and the equivalence is clear.

We can therefore assume that $A=\F(R^g)=E^g$.  Let $\lambda_0 : R^* \to R$ be the isomorphism defined by $\alpha \mapsto \alpha(1)$. Since the dual of $E$ is only defined up to isomorphisms, we can assume by composing with an isomorphism that $\F(\lambda_0) : E \to \hat{E}$ is the unique principal polarization $P\mapsto \mathcal{O}([O]-[P])$ on $E$. Then the product polarization $a_0=\F(\Lambda_0)$ where $\Lambda_0 : (R^g)^* \to R^g$ is defined by $(\alpha_1,\ldots,\alpha_g) \mapsto (\alpha_1(1),\ldots,\alpha_g(1))$.  Now let $M =  \lambda \Lambda_0^{-1} \in \End(R^g) = M_g(R)$. Since $\Lambda_0 M^* \Lambda_0^{-1} = {^t \bar{M}}$, the Rosati involution $\dag$ induced by $a_0$ on $\End(E^g)=M_g(R)$ is $M \mapsto {^t \bar{M}}$. Hence, ${^t \bar{M}}=M$ if and only if $(a_0^{-1} a)^{\dag} = (a_0^{-1} a)$ i.e. $a = \phi_{\LL}$ by \cite[Prop.17.2]{milne}.
 On the other hand, the form $H_{\lambda}$ is hermitian if and only if  ${^t \bar{M}}=M$. 
\end{proof}

\begin{lemma}\label{lemdiago}
Let $L$ be a $R$-lattice of rank $g$ and $\lambda\colon L^*\rightarrow L$ injective such that $H_{\lambda}$ is hermitian. Then there exists a free over-lattice $L\xhookrightarrow{\iota} N=\bigoplus_{i=1}^g R e_i $ 
and integers $(\ell_i)_{1\leq i\leq g}$ such that if $\lambda'=\iota \lambda \iota^*$, then $H_{\lambda'} : N^* \times N^* \to R$ satisfies $H_{\lambda'}(e_i^*,e_j^*) = \ell_i \delta_{ij}$.
\end{lemma}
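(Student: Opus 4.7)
The strategy is to produce the required free over-lattice $N$ as the $R$-dual of a free orthogonal sublattice of $L^*$, and to read off the diagonal entries directly. The key tool is a Gram--Schmidt orthogonalisation over $F$ applied to $(V^*, H_\lambda)$, followed by an integer-scaling step to land back inside $L^*$.

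\medskip

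Since $\lambda$ is injective between lattices of the same $R$-rank $g$, it extends to an $F$-linear isomorphism $V^* \defby FL^* \to V \defby FL$; in particular $H_\lambda$ is a non-degenerate hermitian form on $V^*$. Apply Gram--Schmidt over $F$ to obtain an $F$-orthogonal basis $(u_1,\dots,u_g)$ of $V^*$. Because $H_\lambda$ is hermitian, the diagonal values $c_i \defby H_\lambda(u_i,u_i)$ are fixed under the Galois involution of $F/\QQ$, so they lie in $\QQ$. Since $L^*$ is a full $\ZZ$-lattice of rank $2g$ in $V^*$, we may choose positive integers $m_i$ large enough that $f_i \defby m_i u_i$ lies in $L^*$ and $\ell_i \defby m_i^2 c_i$ is an integer. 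The vectors $f_1,\dots,f_g$ are then $F$-orthogonal with $H_\lambda(f_i,f_j) = \ell_i\,\delta_{ij}$ and all lie in $L^*$.

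\medskip

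Now let $(e_1,\dots,e_g)$ be the $F$-basis of $V$ dual to $(f_1,\dots,f_g)$ under the evaluation pairing $V \times V^* \to F$, i.e. $f_j(e_i) = \delta_{ij}$, and set $N \defby \bigoplus_{i=1}^g R e_i$, which is free by construction. Any $x \in L$ satisfies $f_i(x) \in R$ for every $i$ because $f_i \in L^* = \Hom_R(L,R)$; expanding $x$ in the $F$-basis $(e_j)$ of $V$ shows that the coefficients lie in $R$, so $x \in N$. This yields the inclusion $\iota\colon L \hookrightarrow N$. A short computation identifies the pulled-back dual basis: for $x \in L$ one has $\iota^*(e_i^*)(x) = e_i^*(x) = f_i(x)$, so $\iota^*(e_i^*) = f_i$ in $L^*$. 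Substituting into $\lambda' = \iota \lambda \iota^*$ gives
\[ H_{\lambda'}(e_i^*, e_j^*) \;=\; e_i^*\bigl(\iota\lambda\iota^*(e_j^*)\bigr) \;=\; \iota^*(e_i^*)\bigl(\lambda(\iota^*(e_j^*))\bigr) \;=\; f_i(\lambda(f_j)) \;=\; H_\lambda(f_i,f_j) \;=\; \ell_i\,\delta_{ij}, \]
as required.

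\medskip

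The main point to handle carefully is the bookkeeping of dualities, in particular the conjugate $R$-action on $L^*$ and $N^*$, and the fact that the identification $\iota^*(e_i^*) = f_i$ relies crucially on the $f_i$ being chosen inside $L^*$ and not merely in $V^*$. Once those identifications are in place, the argument reduces to Gram--Schmidt over $F$ together with denominator-clearing in a $\ZZ$-lattice, both of which are elementary.
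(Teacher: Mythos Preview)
Your proof is correct and follows essentially the same approach as the paper: find an $H_\lambda$-orthogonal family in $L^*$, take $N$ to be the free lattice on the dual basis in $V$, and verify the diagonal form of $H_{\lambda'}$ by unwinding $\iota^*$. The only cosmetic differences are that the paper cites its earlier Gram--Schmidt result (Proposition~\ref{ellid}(1)) to obtain the orthogonal family directly inside $L^*$ with integral norms, and phrases the inclusion $L\subset N$ via reflexivity $L\simeq L^{**}$ rather than your direct coefficient check.
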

\begin{proof}
Since $\lambda$ is injective, the hermitian form $H_{\lambda}$ is non-degenerate. As in Theorem~\ref{ellid}(1), we can find a basis $(\alpha_i)$ of $V^*=FL^*$ of vectors of $L^*$ which is orthogonal for $H_{\lambda}$, i.e., $\alpha_i \lambda \alpha_j=\ell_{i}\delta_{ij}$ with $\ell_i\in\ZZ$. Consider $N'=\bigoplus_{i=1}^g R\alpha_i\subseteq L^*$ and then $N=N^{'*}\supseteq L^{**} \simeq L$, the last isomorphism being the evaluation map $ev: L \to L^{**}$. Denote by $\iota \colon L\rightarrow N$ the injection and $(e_i)$ the dual basis of $(\alpha_i)$. Noticing that $\alpha_i^{**} = \alpha_i \circ ev^{-1}$, we get that 
$$H_{\lambda'}(e_i^*,e_j^*) = \alpha_i^{**} (\iota \lambda \iota^*) \alpha_j^{**} = \alpha_i \lambda \alpha_j = \ell_i \delta_{ij}.$$

\end{proof}

\begin{lemma}\label{lempol}
Let $f\colon A\rightarrow B$ be an isogeny and $\mathcal{L}$ be an invertible line bundle on $B.$ Then $\mathcal{L}$ is ample if and only if $f^*\mathcal{L}$ is ample.
\end{lemma}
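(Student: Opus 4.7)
The plan is to exploit the special structure of abelian varieties rather than invoking a general result about finite morphisms; specifically, I would use Mumford's criterion that a line bundle $\mathcal{M}$ on an abelian variety $X$ is ample if and only if the polarization morphism $\phi_{\mathcal{M}} \colon X \to \widehat{X}$ is an isogeny and the Euler characteristic $\chi(X, \mathcal{M})$ is strictly positive (cf.\ \cite[III.16]{mumford-book}). The strategy is then to show that each of these two conditions is preserved (in both directions) by pullback along the isogeny $f$.

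First I would record the standard functoriality of polarizations: for any line bundle $\mathcal{L}$ on $B$ one has
\[
\phi_{f^*\mathcal{L}} = \widehat{f} \circ \phi_{\mathcal{L}} \circ f,
\]
which already appears implicitly in the proof of Lemma~\ref{cordiago}. Since $f$ is an isogeny and the dual $\widehat{f}$ is therefore also an isogeny, the composite on the right is an isogeny if and only if the middle factor $\phi_{\mathcal{L}}$ is one. Hence the first condition of Mumford's criterion transfers perfectly between $\mathcal{L}$ and $f^*\mathcal{L}$.

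Next I would compare Euler characteristics. The standard identity $\chi(A, f^*\mathcal{L}) = \deg(f)\cdot \chi(B, \mathcal{L})$ (which follows at once from the Leray spectral sequence together with the fact that $f$ is finite flat of degree $\deg(f)$, so that $f_*\mathcal{O}_A$ is locally free of rank $\deg(f)$) and the positivity of $\deg(f)$ show that $\chi(B, \mathcal{L}) > 0$ if and only if $\chi(A, f^*\mathcal{L}) > 0$. Combining this with the previous step and Mumford's criterion applied in turn to $B$ and to $A$ yields the desired equivalence.

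The only slightly delicate step is really the invocation of Mumford's ampleness criterion, but this is well documented; the two auxiliary facts I need beyond it—the compatibility $\phi_{f^*\mathcal{L}} = \widehat{f}\circ \phi_{\mathcal{L}} \circ f$ and the Euler characteristic multiplicativity under isogenies—are both classical. I would then conclude by simply chaining the equivalences together, so no real obstacle is expected.
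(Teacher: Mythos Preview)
Your argument is correct and takes a genuinely different route from the paper's. The paper proves the implication $f^*\mathcal{L}$ ample $\Rightarrow$ $\mathcal{L}$ ample by invoking the general scheme-theoretic fact that ampleness descends along faithfully flat quasi-compact morphisms (citing \cite[Exercise 5.1.29]{liu-book}), and then deduces the converse by a trick with the contragredient isogeny $\tilde f\colon B\to A$: since $\tilde f^* f^* \mathcal{L}\simeq [\deg f]^*\mathcal{L}\simeq \mathcal{L}^{(\deg f)^2}$ is ample when $\mathcal{L}$ is, the descent result applied to $\tilde f$ forces $f^*\mathcal{L}$ to be ample. Your approach instead stays entirely inside the theory of abelian varieties, reducing both directions at once to Mumford's index/ampleness criterion and the two compatibilities $\phi_{f^*\mathcal{L}}=\widehat{f}\circ\phi_{\mathcal{L}}\circ f$ and $\chi(A,f^*\mathcal{L})=\deg(f)\cdot\chi(B,\mathcal{L})$. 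The paper's proof is more elementary in that it uses only one black box (faithfully flat descent of ampleness) plus a symmetry trick, whereas yours leans on the vanishing/index theorem; on the other hand, yours is more symmetric and avoids the slightly artificial passage through $\mathcal{L}^{(\deg f)^2}$.

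One small caveat: the justification you give for $\chi(A,f^*\mathcal{L})=\deg(f)\cdot\chi(B,\mathcal{L})$ (``Leray plus $f_*\mathcal{O}_A$ locally free of rank $\deg f$'') is not quite enough as written, since for a general finite flat $f$ the Euler characteristic of $\mathcal{L}\otimes f_*\mathcal{O}_A$ involves the Chern classes of $f_*\mathcal{O}_A$. Here it works because for an isogeny $f_*\mathcal{O}_A$ decomposes as a direct sum of degree-$0$ line bundles (or, more simply, because Riemann--Roch on abelian varieties gives $\chi(\mathcal{L})=(\mathcal{L}^g)/g!$ and $(f^*\mathcal{L})^g=\deg(f)\cdot(\mathcal{L}^g)$). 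You may want to say this explicitly.
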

\begin{proof}
  An isogeny is a finite faithfully flat morphism.
  So ampleness ascends along the isogeny, since it is finite, by
  \cite[II.5.1.12]{EGA},
  and descends since it is faithfully flat \cite[IV.2.7.2]{EGA}
  (the proof holds for relative ampleness but it is easy to adapt it for ampleness, see also \cite[Exercise 5.1.29]{liu-book}). 
\end{proof}
\begin{lemma}\label{cordiago}
Let $L$ be a $R$-lattice and $A=\F(L)$ be the corresponding abelian variety. Let $\lambda : L \to L^*$ be such that $H_{\lambda}$ is hermitian and $a=\F(\lambda) \colon A\rightarrow \widehat{A}$ be the corresponding isogeny. Then there exists an isogeny $f\colon E^g\rightarrow A$, integers $(\ell_i)_{1 \leq i \leq g}$,  a map $D \in \End(E^g) : (x_1,\ldots,x_g) \mapsto (\ell_1 x_1,\ldots,\ell_g x_g)$ and a commutative diagram

\begin{figure}[H]
 \[
\xymatrix{ A  \ar[r]^{a} & \widehat{A} \ar[d]^{\widehat{f}}  \\
E^g \ar[u]^{f} \ar[r]^{a_0\circ D}& \widehat{E^g} }
\]
\caption{Fundamental diagram} \label{fig:fund}
\end{figure}
where $a_0$ is the product polarization on $E^g$. Moreover $a$ is a polarization if and only if $\ell_i>0$ for all $i$, or equivalently if and only if $H_{\lambda}$ is positive definite on $FL^*$.
\end{lemma}
\begin{proof}
Let $\iota: L\hookrightarrow N=\bigoplus_{i=1}^g R e_i$ and $\lambda'= \iota \lambda \iota^*$ be as in Lemma~\ref{lemdiago}. Consider the isomorphism $u\colon N\xrightarrow{\sim} R^g$ given by the basis $(e_i)$ of $N.$  Hence we have
\[
\xymatrix{ & & L^*  \ar[r]^{\lambda} & L \ar[d]^{\iota} &   \\
(R^g)^*\ar[r]^{\sim}_{\Lambda_0} &R^g\ar[r]^{\sim}_{u^*}& N^* \ar[u]^{\iota^*} \ar[r]^{\lambda'}& N \ar[r]^{\sim}_{u} & R^g.}
\]
We obtain the desired diagram by composing this diagram by $\F$ and taking $f=\F(u\iota).$\\
Since $H_{\lambda}$ is hermitian, there exists a line bundle $\LL$ on $A$ such that $a=\phi_{\LL}$.
Since $a_0 D $ is the pullback of $a$ by $f$ the isogeny $a$ is a polarization if and only $a_0 D $ is a polarization by  Lemma~\ref{lempol} below. Moreover, $a_0 D $ is a polarization of $E^g$ if and only if $\ell_i>0$ for all $i$. As in the first part of Lemma~\ref{lem:sym}, we can conclude that $H_{\lambda}$ is positive definite if and only if $H_{\lambda'} = \textrm{diag}(\ell_1,\ldots,\ell_g)$ is, and we have the final equivalence of the lemma.
\end{proof}{}

\begin{remark} \label{rem:cumber}
The fact that  $H_{\lambda}$ is a hermitian form on $L^*$ and not on $L$ is a bit cumbersome. Since $\lambda$ is injective, it induces an isomorphism $\Lambda:=(\lambda\otimes_R \Id_F)^{-1} \colon L \otimes F=V \rightarrow V^*$. This defines a hermitian form on $V$ given by
$$h \colon V\times V\rightarrow F, (x,y)\mapsto \Lambda(x)(y)$$
which makes $(L,h)$ a hermitian $R$-lattice.
\end{remark} %

\begin{proof}[Proof of Theorem \ref{th:equivalencepol}]
Simply combine Lemmas~\ref{lem:sym} and \ref{cordiago} with Remark~\ref{rem:cumber} to get $h$ on $L \times L$ instead of $H_{\lambda}$ on $L^* \times L^*$.
The final statement about the degree of the polarization is easily obtained  using \cite[Theorem 4.4]{JKP+18} which computes the degree of an isogeny corresponding to an inclusion of lattices with equal rank $\iota \colon L \rightarrow M$ by $\deg \F(\iota)=[M\colon \iota(L)].$  
\end{proof}

\subsection{Description of the abelian variety as a quotient of $E^g$} \label{sec:effectivekernel}
Let $(L,h)$ be a hermitian lattice with $L^\#$ integral. The goal of this section is to  compute the kernel of the isogeny $f\colon E^g\rightarrow A=\F(L)$ of Corollary \ref{cordiago} obtained by the inclusion $L\hookrightarrow R^g$ induced by $\iota$ in Lemma \ref{lemdiago} after identification of $N=\bigoplus R e_i$ with $R^g$.

As a first step, to apply Corollary~\ref{cor:equivalencepol}, we need to start with an explicit elliptic curve $E/\FF_q$ with ring of endomorphism $\ZZ[\pi]$. If $q$ is small, it is efficient to use an algorithm which determines the endomorphism ring of an ordinary elliptic curve \cite{eisentrager-lauter} or \cite{bisson-sutherland}. In the package implemented, we consider a version of the latter kindly provided by Sutherland and apply it to the list of elliptic curves with a given trace (which can be naively obtained from the list of all elliptic curves by computing the trace on each of them). If the discriminant of $\ZZ[\pi]$ is small, the best way to obtain $E$ is to compute a root $j$ over $\FF_q$ of the Hilbert class polynomial of $\ZZ[\pi]$ and find among the elliptic curves with $j$-invariant $j$ the one with the right trace. We refer to \cite{enge2009complexity,sutherlandCM} for algorithms to compute this class polynomial. Note that the complexity of the rest of the algorithm strongly depends on the discriminant of $\ZZ[\pi]$, so choosing this second method, this step is never the bottleneck of the whole algorithm.

Given an inclusion of equal rank $g$ $R$-lattices $\iota \colon L_1\rightarrow L_2$ and surjective morphisms $T_i\colon R^{m_i}\rightarrow L_i$ we can lift $\iota$ to $P\in M_{m_2,m_1}(R)$ 
 \[
\xymatrix{  R^{m_1}  \ar[r]^{T_1}\ar[d]_{P} & L_1 \ar[d]^{\iota}   \\
R^{m_2} \ar[r]^{T_2}& L_2 }
\]
by computing the image of the canonical basis of $R^{m_1}$ by $\iota\circ T_1$ and taking any preimages by $T_2.$ Since the morphisms $T_i$ are surjective, $\F(T_i)$ are injective and the kernel of the corresponding isogeny $\F(\iota)=f\colon \F(L_2)\rightarrow \F(L_1)$ can be computed by $\ker f=\F(T_2)^{-1}\ker \trsp{P}.$

In the present situation, $L_1=L, L_2=N = \bigoplus R e_i, m_2=g$ and $T_2=\Id$. It remains to make $T_1$ and the $(e_i)$ explicit. Consider a pseudo-basis $L=\fa_1 x_1\oplus\dots\oplus \fa_g x_g.$ Since the $\fa_i$ are fractional $R$-ideals they have at most $2$ generators. Hence, $L$ is generated by $g\leq r\leq 2g$ generators. So, $T_1$ is the surjective morphism $R^r\twoheadrightarrow L$
sending the canonical basis of $R^r$ on the generators of $L$.
Applying the functor $\F$ to the composition leads to the commutative diagram
\[
\xymatrix{E^{r} & &\ar@{_{(}->}[ll]_{\F(T_1)} A   \\
E^g \ar[u]^{\trsp{P}} \ar[rru]_{f}   }
\]
 and $\ker f =\ker(\F(T_1)\circ f)=\ker \trsp{P}.$ By Figure~\ref{fig:fund}, one sees that $\ker f \subseteq \ker D=\prod_{i=1}^g E[\ell_i]\subseteq E[\ell]^g$ with $\ell=\textrm{lcm}(\ell_i)$ and $D$ the map of Lemma \ref{cordiago}.  Thus, it is enough to compute the action of $\trsp{P}$ on a basis of the $\ell$-torsion of $E^g$ to have the whole kernel. To go on, we will assume that $\ell$ is prime to $\car \Fq$ so $E[\ell] \simeq \left(\ZZ/\ell\ZZ\right)^2$ is \'etale and we can work with geometric points. 

Let clarify how  to compute the family $(e_i)_{1 \leq i \leq g}$. Let us recall that we defined it as the dual basis of an orthogonal family of $L^*$ so they satisfy $e_i^*\lambda' e_j^*=\ell_i\delta_{ij}$. This means that $\lambda'(e_i^*)=\ell_ie_i$ and then $h(e_i,e_i)=\frac{1}{\ell_i}$ (see proof of Lemma~\ref{lem:iso}). Consider an orthogonal family $(u_i)_{1\leq i\leq g}$ of vectors of $L^\#$ of norm $(\ell_i)_{1 \leq i \leq g}$  and let $e_i=\frac{1}{\ell_i}u_i.$ By the inclusion $\bigoplus_{i=1}^{g} R u_i\subseteq L^\#$ we have $$L\subseteq \left(\underset{1 \leq i \leq g}{ \mathlarger{\mathlarger{\perp}}} R u_i\right)^\#= \underset{1 \leq i \leq g}{ \mathlarger{\mathlarger{\perp}}} (R u_i)^{\#} =\underset{1 \leq i \leq g}{ \mathlarger{\mathlarger{\perp}}} R e_i.$$ 
Hence, if we find an orthogonal family $(u_i)_{1\leq i\leq g}$ of $L^\#$ with norm $(\ell_i)_{1 \leq i \leq g}$ then $(e_i)_{1 \leq i \leq g}=(1/\ell_i \cdot u_i)_{1\leq i\leq g}$ is an orthogonal family of norm $(1/\ell_i)_{1 \leq i \leq g}$ suited for the inclusion $\iota \colon L\rightarrow \bigoplus_{i=1}^g R e_i$.

We summarize these computations in Algorithm~\ref{algo:kerneliso}.

\begin{algorithm}[H] %
\begin{algorithmic}[1] %
	\Require A $R$-lattice $(L,h)$ and $E$ an elliptic curve over $\FF_q$ with $\End(E)=\ZZ[\pi] \simeq R$.
   \Ensure A basis of the kernel of an isogeny $f : E^g \to \F(L)$ such that the polarization $a$ on $L$ induced by $h$ satisfies $\hat{f} a f$ is a completely decomposable polarization. 
   \State Compute an orthogonal family $(u_i)_{1\leq i \leq g}$ of $\Ld$ of norms $\ell_i \in \ZZ$ using the Gram-Schmidt process (when $\ell_1=\ldots=\ell_g$, use Algorithm~\ref{algo:ortho2}). Define $e_i=u_i/\ell_i$, $\ell=\lcm(\ell_i)$, $N=\bigoplus_{i=1}^g R e_i$ and  $\iota$ the inclusion of $L$ in $N$. 
   \State    Compute a pseudo-basis $L=\fa_1 x_1\oplus\dots\oplus \fa_g x_g$ given by $1 \leq r \leq 2g$ generators. Let  $T_1\colon R^r\rightarrow L$ be a surjective morphism that sends the canonical basis of $R^r$ on the generators of $L$. 
    \State Let $P$ be the matrix of the morphism $\iota \circ T_1 : R^r \to N$ in the canonical basis  of $R^r$ and the basis $(e_i)$ of $N$.
    \State Compute a basis $(b_0,b_1)$ of $E[\ell]$. This allows us to identify $E[\ell]$ with $(\ZZ/\ell\ZZ)^{2}$ and let $\mu\colon E[\ell]^{2g}\rightarrow (\ZZ/\ell\ZZ)^{2g}$ be the isomorphism induced by the identification.
    \State Compute the action of the Frobenius $\pi$ on $(b_0,b_1)$ as a matrix  $\Pi \in M_2(\ZZ/\ell\ZZ).$
    \State Create a matrix $Q\in M_{2r,2g}(\ZZ/\ell\ZZ)$ by replacing each entry $a+b \pi$ of $\trsp{P}$ by $a I_2 + b \Pi$. 
    \State Compute a basis $\mathcal{B}$ of  $\ker Q\in (\ZZ/\ell\ZZ)^{2g}$.
     \State \textbf{return} $\mu^{-1}(\mathcal{B})$ .
\end{algorithmic}
\caption{Computation of the kernel of an isogeny $E^g \to A$} \label{algo:kerneliso}
\end{algorithm}
We will use this algorithm with the additional condition $\ell_1=\ldots=\ell_g$. Indeed, in Section~\ref{sec:theta}, we need more specific properties about the kernel $K$ of the isogeny in order to be able to compute the theta null point on $A$ using the current algorithms.
We first require that $\ell=\ell_1=\ldots=\ell_g$ is odd and prime to $q$ (see \cref{rem:details} for the condition $\ell$ odd). We have discussed in Theorem~\ref{ellid}, when this can be achieved.
By \cite[Prop.16.8]{milne}, $K$ is a maximal isotropic subgroup of $E[\ell]^g$ for the Weil pairing on $E^g$ induced by the product polarization. However for the algorithms we also need $K$ to be of rank~$g$, that is
isomorphic as a group to $(\ZZ/\ell\ZZ)^g$. We call such a $K$ a \emph{totally
isotropic} subgroup. Equivalently, for an abelian variety $A_0$, $K \subset A_0[\ell]$ is a \emph{totally isotropic} subgroup of level~$\ell$ if it is isotropic, and one can find a symplectic decomposition $A_0[\ell]=K \oplus K'$.
If $K$ is maximal isotropic, it is always totally isotropic when $\ell$ is square
free, but this can fail if $\ell$ has a square factor (for instance $A_0[\ell]$ is maximal isotropic in $A_0[\ell^2]$). We adopt a pragmatic approach here and test that a given $K$ has indeed the right group structure. In every computation we made, when an odd $\ell$ exists, we always found one for which $K$ was totally isotropic.

\section{Theta structures and a modular interpretation of the isogeny formula}
\label{sec:theta}
In this section, $k$ is any field of characteristic $p \ne 2$. We will first recall in \cref{sec:isogenyformula} how to use the so-called isogeny formula to derive the theta null point on a target abelian variety from a (well-chosen) isogenous one. Then, in Section~\ref{sec:universal}, we will show that the isogeny formula is actually valid over the universal abelian scheme. Although the proof basically follows the same lines as the proof over a field, this result, and the notation introduced there, will be useful in Section~\ref{sec:modular}, where we will derive a precise affine version of the isogeny formula.
More precisely, we introduce a particular choice of affine lifts of the theta null points which we call \emph{modular}, since they are derived from interpreting the theta constants as modular forms, and we show in Theorem~\ref{th:modular} that the isogeny formula respects the modular lifts.
In \cref{sec:thomae}, we explain how to compute $k$-rational modular lifts for
a product of elliptic curves with a product polarization.
Combining the `modular' isogeny formula and these initial modular lifts
allow us in \cref{sec:siegelisogeny} to
compute values of Siegel modular forms of even weight given as polynomials in the theta constants with coefficients in $k$ on the span of the isogeny class (see \cref{th:siegelmodular} and Algorithm~\ref{algo:modular}).

\subsection{Input for the isogeny formula over $k$} 
\label{sec:isogenyformula}

Let $(A, \pol, \Theta_\pol)/k$ be a principally polarized  abelian variety of
dimension~$g$ with a totally symmetric theta structure $\Theta_\pol$ of level~$n$ on $\pol$. This implies that $n$ is even which we assume from now on (until the end of this section).
Let $K$ be a $k$-rational totally isotropic subgroup for the Weil pairing of
$\pol^\ell$, with $\ell$ prime to $n p$ or to $n$ if $p=0$.

In~\cite{DRniveau,DRisogenykernel}, an algorithm (which we call the \emph{isogeny
formula} and implemented in the package \texttt{Avisogenies} \cite{DRAVIsogenies}) is given to compute the
isogeny $f: (A, \pol, \Theta_\pol) \to (B, \bpol, \Theta_\bpol)$ where
$B=A/K$, $f^\ast \bpol=\pol^\ell$ and $\Theta_\bpol$ is the unique
symmetric theta structure of level $n$ on $\bpol$ compatible with
$\Theta_\pol$ (the unicity comes from the fact that $\ell$ is 
prime to $n$).
More precisely the algorithm takes as input the (projective) theta null
point $\theta^A(0)\defby\left(\theta^A_{i \in \Zn}(0)\right) \in \PP(\bar{k})^{n^g-1}$ of $A$, where
$\Zn=(\ZZ/n\ZZ)^g$, along with the theta coordinates of the geometric
points of $K$ (or suitable equations giving the kernel $K$) and outputs the
theta null point $\theta^B(0)\defby\left(\theta^B_{i \in \Zn}(0)\right)$ of $B$ along with
the equations for the isogeny $f$. We usually take $n=4$ (since this is the smallest even $n$ which gives an embedding of the variety into projective space) and the theta null point completely characterizes $(B,\bpol)$ up to $\bar{k}$-isomorphism. We will describe in more details (a generalisation of) this algorithm in \cref{sec:universal}. In this section we explain how to compute the inputs for the isogeny formula in our situation.

Let $E/k$ be an elliptic curve. If $(B,\bpol)$ is isogenous to $E^g$, we show how to compute $\theta^B(0)$ of level $4$ by applying the algorithm with $A=\prod_{i=1}^g E_i$ where $E_i$ are elliptic curves over $k$ isogenous to $E$ and $\pol$ the principal product polarization on $A$. For this, we need three elements as inputs for the algorithm:
\begin{itemize}
\item compute a totally isotropic kernel $K$ such that $B=A/K$. When $k$ is a finite field and $E=E_1=\ldots=E_g$ is ordinary,
we have seen in Section~\ref{sec:effectivekernel} how to do this effectively;
\item compute the theta null point $\theta^A(0)$ of level $4$ on $(A,\pol)$. As we deal with the product polarization, the coordinate $\theta_{i_1,\ldots,i_g}^A(0)$ of $\theta^A(0)$ is equal to $\prod_{1 \leq j \leq g} \theta_{i_j}^{E_j}(0)$. Getting the theta null point on an elliptic curve (over a field of odd characteristic) is a classical result. In Corollary~\ref{cor:thomae}, we give an even more precise version of this to which we refer now and that we use in Step 1 and 2 of Algorithm~\ref{algo:thetaquotient}. 
\item compute the theta coordinates of the points in the kernel $K$. Likewise
  since we have a product polarization,
  $\theta_{i_1,\ldots,i_g}^A(x_1, \ldots, x_g) = \prod_{1 \leq j \leq g} \theta_{i_j}^{E_j}(x_j)$.
Computing the theta coordinates $\left(\theta^{E_i}_j(x_i)\right)_{j \in \ZZ/4\ZZ}$ is also classical \cite{mumford-tata2}, \cite[Chapter~5]{cosset2011thesis}, and is implemented in \texttt{Avisogenies} \cite{DRAVIsogenies}.
\end{itemize}
We therefore get the following algorithm~\ref{algo:thetaquotient}.

\makeatletter
\newcommand\footnoteref[1]{\protected@xdef\@thefnmark{\ref{#1}}\@footnotemark}
\makeatother

\begin{algorithm}[H]
\begin{algorithmic}[1]
  \Require{Elliptic curves $E_i/k$   with equation  $y^2=(x-e_{1i})(x-e_{2i})(x-e_{3i})$ where $k$ is of characteristic $p$ different from $2$, a $k$-rational totally isotropic subgroup $K$ of $A=\prod_i E_i$ of order prime to $2 p$ (or just prime to $2$ if $p=0$).}
 \Ensure{The theta null point $\theta^B(0)$ of level $4$ on $B=A/K$ with $\bpol$ the polarization induced by the product polarization on $A$.}
\State{For all $1 \leq i \leq g$, define $\theta'^{E_i}_{0}=\sqrt[4]{e_{1i}-e_{3i}},  \theta'^{E_i}_1= \sqrt[4]{e_{1i}-e_{2i}}, \theta'^{E_i}_2 =\sqrt[4]{e_{2i}-e_{3i}}$ for arbitrary choices of the roots.}
\State{Compute $\theta^{E_i}_{0}(0)=\theta'^{E_i}_{0} +\theta'^{E_i}_{1}$, $\theta^{E_i}_{2}(0)=\theta'^{E_i}_{0} -\theta'^{E_i}_{1}$ and $\theta^{E_i}_{1}(0)=\theta^{E_i}_{3}(0)=\theta'^{E_i}_{2}$ for all $1 \leq i \leq g$.}
\State{Compute  $\theta^A_{(i_1, \ldots, i_g)}(0)=\theta^{E_1}_{i_1}(0) \cdots \theta^{E_g}_{i_g}(0)$ for all $(i_1,\ldots,i_g) \in \Zfour$.}
\State{For all $1 \leq i \leq g$ and for all  $x=(x_1,\ldots,x_g) \in K\setminus \{0\}$, compute the theta coordinates $\left(\theta^{E_i}_j(x_i)\right)_{j \in \ZZ/4\ZZ}$, using \cite[Chapter~5]{cosset2011thesis}, %
}
  \State{Compute  for all $j=(j_1,\ldots,j_g) \in \Zfour$ and  for all $x=(x_1,\ldots,x_g) \in K \setminus \{0\}$
$\theta^{A}_j(x) = \theta^{E_1}_{j_1}(x_1) \cdots \theta^{E_g}_{j_g}(x_g)$.}
\State{Use \cite{DRniveau}%
taking as input $\theta^{A}(0)$ and the theta coordinates of the points of $K$ and output $\theta^B(0)$.}\\
\Return{$\theta^B(0) \in \PP(\bar{k})^{4^g-1}$.}
\end{algorithmic}
 \caption{Computation of the theta null point of level $4$ on the quotient variety} \label{algo:thetaquotient}
\end{algorithm}

\begin{remark} \label{rem:details}
Some remarks on the code:
\begin{itemize}
\item %
The original version of \texttt{Avisogenies} assumed $\ell$ to be a prime.   The only modification to the code we had
to make is on how to construct a matrix $F \in \Mat_r(\ZZ)$ such that
$\transp{F} F = \ell \Id$ used by Koizumi's formula~\cref{eq:koizumi}.
The integer $r$ depends on $\ell$ being a square (hence $r=1$), $\ell$ being a sum of two positive squares (hence $r=2$) or a sum of four positive square (hence $r=4$). Adapting the construction of $F$ to $\ell$ odd non prime is straightforward by multiplicativity of the complex norm (if $r=2$) or of the quaternionic norm (if $r=4$).
\item  The restriction $\ell$ odd is not necessary in theory
    if some great care is taken. First the lift from level~$n$ to level~$\ell n$
    is more complicated since we cannot work only on the points in the
    kernel $K$. We first need to compute a basis of points $P_i$ such that $n
    P_i$ is a basis of $K$ (this was given to us for free before by the
    CRT). Furthermore this basis has to be compatible with the level~$n$
    structure on $A$, so this may require first to act by an automorphism of
    the theta structure to make the level~$n$ structure on $A$ compatible
    with $K[n]$. Secondly, if $\ell$ is not odd, then there may be several
    symmetric theta structures on $B$ compatible with the one on $A$. So the
    isogeny formula in this case yields several solutions.
    This has not yet been implemented in \cite{DRAVIsogenies}.
\end{itemize}
\end{remark}

\subsection{The isogeny formula on the universal abelian scheme}
\label{sec:universal}

In this section we reformulate the isogeny formulae from \cite{DRniveau} to
show that the formulae are polynomials with coefficients in
$\ZZ[\frac{1}{\ell n}]$ in the coordinates of the points of $K$. Since the
fine moduli scheme (or stack if $n \leq 2$) $\Agn$ of abelian
varieties with a symmetric theta structure of level~$n$ is smooth (or by
rigidity~\cite[\S~6]{mumford1994geometric}), the isogeny formula is thus
valid on the universal abelian variety defined over $\ZZ[\frac{1}{\ell
n}]$. Though well known to experts, this is not completely obvious in the
formulation of~\cite{DRniveau} since the authors only work with fields and
implicitly use divisions in their equations.

We first give some motivations for this result. In \cref{sec:modular} we
give an algebraic modular interpretation of the isogeny formula by first
considering the analytic modular interpretation over $\CC$. It is then
possible, by standard lifting arguments to extend this result to ordinary
abelian varieties over a finite field. But, while possible, this is a bit
painful to do properly since we want to control the lifts of the
endomorphisms along with the differentials, and then give an algebraic
meaning to the reduction of the period matrix modulo $p$. By contrast,
showing that the isogeny formula is actually defined over
$\ZZ[\frac{1}{\ell}]$ yields a much simpler proof that the analytic
interpretation holds algebraically. Indeed, by smoothness, the modular
interpretation is ultimately a statement about the equality of two
multivariate polynomials defined over $\ZZ[\frac{1}{\ell}]$. But this
equality holds when it holds over $\CC$.
In addition, this proof holds for all abelian varieties rather than just the ordinary ones.
The notations introduced in this section will also be useful in
\cref{sec:modular} where we keep track of each modular factor at each step
of the algorithm.

In order to avoid heavy notation, we will often let the theta
structure $\Theta_{\pol}$ (and eventually the polarization $\pol$) be implicit,
along with the coordinate group $\Zn$.

Assume from now on that $n$ is (even and) greater or equal to $4$ and $\ell$ prime to $n$. Mumford
constructs in \cite{MumfordOEDAV2} the universal abelian variety
$\Xgn \to \Agn$ with a totally symmetric normalized  relatively ample line bundle\footnote{See \cite[Definition~p.78]{MumfordOEDAV2} for the definition of these terms.} and a symmetric theta structure of level~$n$
 over $\ZZ[1/n]$\ \footnote{The irreducible components are defined over $\ZZ[1/n,\zeta_n]$ since over this ring all points of the level~$n$ Heisenberg group $\Hn$ are defined.} as a quasi-projective scheme.
Moreover Mumford uses
Riemann's relations \cite[p.~83]{MumfordOEDAV2} to define a projective scheme $\Xbargn \to \Abargn$
(where the equations of $\Abargn$ are given by evaluating the Riemann's relations on the zero section, together with the symmetry relations $\theta_i(0)=\theta_{-i}(0)$) and an embedding of $\Xgn \to \Agn$ into $\Xbargn \to \Abargn$ (so that $\Xgn$ is the pullback of $\Xbargn$ to $\Agn$).
We denote $(\theta_i)_{i \in \Zn}$ the theta coordinates on either $\Xgn$ or
$\Xbargn$ and
$(\theta_i(0))_{i \in \Zn}$ the theta null point coordinates on either $\Agn$ or
$\Abargn$ coming from the section $s: \Abargn \to \Xbargn$ (which
restricted to $\Agn$ corresponds to the zero section).

On $\Xbargn$,
we have an explicit
action $\lambda$ of the Heisenberg group $\Hn$ on $\pol_{\Xbargn}$
\cite[Step~1, p.~84]{MumfordOEDAV2}. Writing $\Hn=\Gm \times \Zn \times
\dZn$ where $\dZn \iso \oplus_{i=1}^g \mu_n$ is the Cartier dual of $\Zn$,
this canonical action is given by $\lambda(i).\theta_j=\theta_{i+j}$ for $i
\in \Zn$ and $\lambda(i).\theta_j=<i,j>\theta_j$ for $i \in \dZn$
where $<i,j>$ is the canonical pairing between $\Zn$ and its Cartier dual $\dZn$.
Acting on the zero section $s$ gives a canonical basis of $n$-torsion.

Mumford's isogeny theorem \cite{MumfordOEDAV1} then describes the universal isogeny (with a descent of
level of the theta structure) 
\begin{equation}
\pi_1:\Xgln \to \Xgn, (\theta_i)_{i \in \Zln} \mapsto (\theta_i)_{i \in \Zn \subset \Zln}.
\end{equation}

On $\Xgln$ the level $\ell n$ theta structure induces a symplectic basis of
the $\ell n$-torsion, and in particular a symplectic decomposition
$K_1 \oplus K_2$ of the $\ell$-torsion. Concretely over a field $k$, 
$K_1 = \{ (<i,j> \theta_j(0))_{j \in \Zln}\}_{i \in \dZl}$ is the kernel of $\pi_1$, while $K_2=\{ (\theta_{i+j}(0))_{j \in \Zln}\}_{i \in \Zl}$
is such that $\pi_1(K_2)=\{ (\theta_{i+j}(0))_{j \in \Zn}\}_{i \in \Zl}$ is the kernel of the contragredient isogeny $\tilde{\pi}_1$.

Using $\pi_1$, we can now describe the isogeny formula in three steps.

\myparagraph{Step 1}
Denote $\Pi_1: \Xgln \to \Xgn^{\ell^g}, (\theta_i)_{i \in \Zln} \mapsto
\left(\pi_1(\lambda(i) (\theta_j))_{j \in \Zln}\right)_{i \in Z(l)}$, where
$\lambda$ is the action of the Heisenberg group $\Hln$ described above. 
For $j \in \Zl$ the component $\Pi^j_1$ of $\Pi_1$ is given by
\begin{equation}
  {\Pi^j_1}^\ast(\theta^{\Xgn}_i)=\theta^{\Xgln}_{i+j}, \quad i \in \Zn.
  \label{eq:Pi1}
\end{equation}
The
image of the restriction of $\Pi_1$ to $\Agln$ (seen as the zero section of
$\Xgln$)
then describes the moduli scheme $\Tmod$ of abelian varieties with a level~$n$
symmetric theta structure together with the points of an isotropic kernel
of the $\ell$-torsion.

It is easy to see that $\pi_1$ extends to a morphism $\pibar_1: \Xbargln \to
\Xbargn$. Since the action $\lambda$ is defined on $\Xbargln$, we can also
extend $\Pi_1$ to a morphism $\Pibar_1: \Xbargln \to \Xbargn^{\ell^g}$.
Let $\Tbar$ be the image of $\Abargln$. By construction $\Tmod$ embeds into $\Tbar$
and since we have explicit equations for $\Abargln$ we have equations for
$\Tbar$.

By construction, given a  $k$-point $(A_0,K_0)$ of $\Tmod$, 
geometric points of
$\Pi_1^{-1}(A_0,K_0) \to \Agln$ corresponds to abelian varieties $B_{0,\kbar} \in
{\Agln}(\kbar)$ with a level $\ell n$ symmetric theta structure
such that the universal isogeny $\pi_1$ restricted to $B_0$
is the contragredient isogeny of $A_{0,\kbar} \to A_{0,\kbar}/K_{0,\kbar}$.
In particular, starting with our abelian variety $(A, \pol)/k$, if $k'$ is an \'etale
extension of $k$ such that all points of $K$ are defined, then fixing an
isomorphism $\Zl \to K$ over $k'$ yields a $k'$-point of $\Tmod$. A
$k"$-point in $\Pi_1^{-1}(A,K)$ then correspond to a theta structure on
$(B, \Mpol^\ell)$ defined over $k"$ such that the contragredient isogeny
$\ftilde: B \to A$ is given by the pullback of $\pi_1$ to $B$.

The discussions in \cite[Corollary~3.6, Proposition~3.7]{DRarithmetic},
\cite[Algorithm~4.4.10]{DRphd}), \cite[\S~4.1]{DRniveau},
\cite{DRisogenies} can then be  reinterpreted as a way to use Riemann
relations to give explicit equations for $\Pibar_1^{-1}(A,K)$ and
$\Pi_1^{-1}(A,K)$.

\myparagraph{Step 2}
Now let $r=1$ if $\ell$ is a square, $r=2$ if $\ell$ is a sum of two
squares and $r=4$ otherwise (the reason of our choice of $r$ will appear in
Step 3). On $\Agln$ the Segre embedding yields a map $\pi_2: \Ag \to \Agrln$,
which sends the universal abelian variety $\Xgln$ to $\Xgln^r$ with its
product theta structure~\cite[Lemma~1, p. 323]{MumfordOEDAV1}.
Concretely,
\begin{equation}
\pi_2^\ast(\theta^{\Xgrln}_{i_1,\ldots,i_r})=\theta^{\Xgln}_{i_1} \cdot \dots \cdot \theta^{\Xgln}_{i_r}
\label{eq:segre}
\end{equation}

In particular, $\pi_2$ sends the theta null point of level~$\ell n$ of $(B,
\Mpol^\ell)$ to the theta null point of $(B^r, \Mpol^\ell \star  \dots
\star \Mpol^\ell)$\ \footnote{If $\pol_1$ is a line bundle on $A_1$ and
  $\pol_2$ is a line bundle on $A_2$ we use the notation $\pol_1 \star \pol_2$
  to denote the line bundle $p_1^\ast \pol_1 \otimes p_2^\ast \pol_2$ where
 $p_i$ is the projection $ A_1\times A_2 \to A_i$.}. 

\myparagraph{Step 3}
Let $F$ be an $r \times r$ matrix with integral coefficients such that $\transp{F} F=\ell \Id$ (see Remark~\ref{rem:details}). Then the Koizumi-Kempf formula \cite{koizumi1976theta, kempf1989linear} yields a map
$\pi_3: \Agrln \to \Agrn$ which corresponds to the isogeny $F: \Xgln^r \to
\Xgln^r$ along with the descent of product theta structure from level $\ell
n$ to level $n$.
The formula is given, for $(i_1, \ldots, i_r) \in \Zn^r$, by
\begin{equation}
  \pi_3^\ast(\theta^{\Xgrn}_{i_1,\ldots,i_g})=F^\ast(\theta^{\Xgn}_{i_1} \cdot \dots \cdot \theta^{\Xgn}_{i_r})
  = \sum_{\substack{(j_1, \ldots, j_r) \in {\Zln}^r \\
  F(j_1,\ldots,j_r)=(i_1,\ldots, i_r)}}
  \theta_{j_1}^{\Xgln} \cdot \dots \cdot \theta_{j_r}^{\Xgln}.
  \label{eq:koizumi}
\end{equation}
Since \cref{eq:koizumi} is homogeneous, this is well defined for projective
coordinates.

In particular, $\pi_3$ uses $F$ to send $(B^r, \Mpol^\ell \star  \dots \star \Mpol^\ell)$
to $(B^r, \Mpol \star  \dots \star \Mpol)$, from which $(B, \Mpol)$ can be
recovered by projecting to one of the factor.

The \emph{isogeny formula} is then the composition $\pi_3 \circ \pi_2 \circ
\Pi_1^{-1}$.

\begin{theorem} \label{th:isoproj}
Let $n$ be an even integer greater or equal to $4$ and $\ell$ be an integer prime to $n$.
The image of $\Pi_1 \times \pi_3 \circ \pi_2: \Agln \to \Tmod \times \Agn$ induces a modular correspondence defined over $\ZZ[\frac{1}{\ell n}]$.

 Let $k$ be a field of characteristic prime to $\ell n$. If $(A,K)$ is a $k$-point of $\Tmod$, then $\pi_3 \circ \pi_2 \circ
  \Pi_1^{-1}(A,K)$ only has a single $\kbar$-point (with multiplicity $\ell^g$ and which is actually defined over~$k$), corresponding to $A/K$.

  This point can be computed in $O(\ell^{g \max(1,r/2)})$ operations in $k$ where, by assumption, $k$ contains the field of definition of the geometric points of $K$.
  \label{th:isogeny_formula}
\end{theorem}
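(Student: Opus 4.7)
My plan is to verify that each of the three building blocks $\Pi_1$, $\pi_2$, $\pi_3$ admits a clean moduli interpretation over $\ZZ[\frac{1}{\ell n}]$, and then to track the fiber over $(A,K)$ through the composition. The Segre map $\pi_2$ of \cref{eq:segre} is monomial with integer coefficients, hence defined over $\ZZ$, and moduli-theoretically it sends a level-$\ell n$ theta structure to the product theta structure on $r$ copies. The Koizumi--Kempf map $\pi_3$ of \cref{eq:koizumi} is also given by homogeneous integer polynomials; moduli-theoretically it encodes the isogeny $F$ together with the descent of the product theta structure from level $\ell n$ to level $n$, which is well defined once $\ell$ is invertible so that $F$ is \'etale on the $\ell n$-torsion. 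Mumford's isogeny theorem \cite{MumfordOEDAV1} provides $\pi_1$ as a morphism over $\ZZ[1/n]$, and combining it with the Heisenberg action $\lambda$ on $\Xbargln$ (defined after adjoining $\zeta_{\ell n}$) yields $\Pi_1$; Galois descent from $\ZZ[\frac{1}{\ell n}, \zeta_{\ell n}]$ to $\ZZ[\frac{1}{\ell n}]$ then realises $\Pi_1 \times (\pi_3 \circ \pi_2)$ as a modular correspondence over $\ZZ[\frac{1}{\ell n}]$.

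For the fiber analysis, the moduli interpretation of \S\ref{sec:universal} shows that the $\kbar$-points of $\Pi_1^{-1}(A,K)$ parametrize pairs $(B, \Theta_{\ell n})$ where $B = A/K$ carries the unique polarization $\bpol$ such that $\tilde{\pi}_1: B \to A$ is the contragredient of the quotient, and $\Theta_{\ell n}$ is a symmetric theta structure of level $\ell n$ on $(B,\bpol)$ compatible with $\Theta_\pol$ through $\pi_1$. Such compatible $\Theta_{\ell n}$ form a torsor under $\dZl \cong (\ZZ/\ell\ZZ)^g$, so the fiber has exactly $\ell^g$ geometric points, all sharing the same underlying $B$. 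Now $\pi_3 \circ \pi_2$ descends $\Theta_{\ell n}$ to a symmetric level-$n$ theta structure on $(B,\bpol)$; since $\gcd(\ell,n)=1$, the level-$n$ structure compatible with a given level-$\ell n$ structure is unique, and moreover the resulting level-$n$ structure is independent of the choice of $\Theta_{\ell n}$ in the fiber: it is the unique $\Theta_\bpol$ compatible with $\Theta_\pol$ through the isogeny $f$. Hence all $\ell^g$ fiber points collapse to a single $\kbar$-point $(B,\bpol,\Theta_\bpol) \in \Agn(\kbar)$ with scheme-theoretic length $\ell^g$. Since $(A,K)$ is defined over $k$ and the composition lives over $\ZZ[\frac{1}{\ell n}] \subset k$, this image is Galois-invariant, and as $\Agn$ is a fine moduli scheme for $n \ge 4$ the unique geometric point is in fact $k$-rational; by construction it corresponds to $A/K$.

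The part requiring the most care will be the complexity bound. A single $\kbar$-point of $\Pi_1^{-1}(A,K)$ can be computed from the Riemann relations in $O(\ell^g)$ operations once one exploits Heisenberg compatibility of the level-$\ell n$ theta structure, since only $O(\ell^g)$ of the $(\ell n)^g$ coordinates are genuinely independent; this accounts for the $r=1$ case, where $\pi_3$ is trivial. The Segre step is essentially free per output coordinate. For $r \ge 2$ the dominant work is the Koizumi sum of \cref{eq:koizumi}: an elementary count shows that, for each level-$n$ output coordinate of the factor being projected to, the summation range (after identifying it with a coset of the kernel of $F: \Zln^r \to \Zln^r$ restricted to lifts of a fixed element) has size $\ell^{gr/2}$, recalling $|\det F|^g = \ell^{gr/2}$. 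Combined with the $O(\ell^g)$ cost of the fiber step, this yields the stated bound $O(\ell^{g \max(1,r/2)})$. Nailing down this last counting precisely is the one point where I would have to be careful.
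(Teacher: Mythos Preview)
Your argument is correct and follows essentially the same route as the paper's own proof, which is quite terse: the paper simply says the first part ``follows from the steps above,'' that every geometric point of $\Pi_1^{-1}(A,K)$ is $B=A/K$ with a compatible level-$\ell n$ structure, that descending via $F$ always yields the same level-$n$ structure, and then quotes \cite{DRniveau} for the complexity. You have fleshed out each of these sentences into an actual argument (explicit integrality of the polynomial maps, the torsor description of the fiber, the Galois-invariance and fine-moduli argument for $k$-rationality), and your version is more informative without deviating in substance.

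On the complexity, your count is in fact sharper than the paper's wording: the Koizumi sum in \cref{eq:koizumi} ranges over a coset of $\ker(F)$ in $\Zln^r=((\ZZ/\ell n\ZZ)^g)^r$, and since $|\det F|=\ell^{r/2}$ this kernel has size $\ell^{gr/2}$, exactly as you say. The paper writes ``computing $\pi_3$ requires $O(\ell^{r/2})$ operations,'' which should be read per $g$-tuple of coordinates (or as a slip); your $\ell^{gr/2}$ is what actually combines with the $O(\ell^g)$ cost of $\Pi_1^{-1}$ to give $O(\ell^{g\max(1,r/2)})$. So the point you flagged as needing care is handled correctly.
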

\begin{proof}
The first part follows from the steps above. For the statement over a field $k$,
  by construction, each geometric point in $\Pi_1^{-1}(A,K)$ corresponds to
  $B=A/K$ with a level $\ell n$ structure compatible with the level $n$
  structure on $A$. Descending the product level $\ell n$ structure via $F$
  then induce the same level $n$ structure on $B$.

  For the complexity estimate, writing equations for $\Pi_1^{-1}$ is in
  $O(\ell^g)$ operations, the Segre embedding only depends on $n$ so is
  absorbed by the big $O$ notation, and computing $\pi_3$ requires
  $O(\ell^{r/2})$ operations, hence the total complexity. We refer to
  \cite{DRniveau} for more details.
\end{proof}

\subsection{Modular interpretation} \label{sec:modular}

Consider  again the algorithm from Theorem~\ref{th:isoproj} but suppose now that we would like to apply it to an affine lift of
a  theta null point of $(A,\pol,\Theta_{\pol})$.
Notice that the choice of an affine lift is induced by the choice of a trivialization of $\pol$ since the $\theta_i^A$ are sections of a power of $\pol$.
Since $\pi_1,
\pi_2$ and $\pi_3$ are well defined as affine morphisms (using the exact
same equations), we can also interpret the isogeny formula $\pi_3 \circ
\pi_2 \circ \Pi_1^{-1}$ as an \emph{affine isogeny formula}, yielding an affine lift
of the theta null point of $B=A/K$. 

In this section, we want to achieve two goals: give the precise relation between affine lifts on $A$ and $B$ through the affine isogeny formula (Theorem~\ref{th:modular}) and also show that we can compute Siegel modular forms constructed as polynomials in the theta constants.

For both purposes, we will need modularity and we therefore start with some classical notions on Siegel modular forms (see for instance \cite{chai,demum,faltingschai,vandergeer}). As before, let $g \geq 1$, $n$ even and greater or equal to $4$. Let $\pi: \Xgn \to \Agn$ be the universal abelian variety with a totally symmetric normalized relatively ample line bundle and a symmetric theta structure of level $n$  over $\ZZ[\frac{1}{n}]$ and $s: \Agn \to \Xgn$ be
the zero section. We denote $\Hodge = \Wedge^g (s^\ast \Omega_{\Xgn}) = \Wedge^g (\pi_\ast\Omega_{\Xgn})$ the Hodge line bundle. 

Let $R$ be a commutative ring with all residue fields $k$ of characteristic $p=0$ or prime to $n$.  
Recall that a (scalar) Siegel modular form $\chi$ of integral weight $\rho \geq 1$ and level $n$\ \footnote{Here by level~$n$ we mean the level group $\Gamma_g(n, 2n)$ of matrices $\gamma \in \Sp_{2g}(\ZZ)$ such that $\gamma = \begin{bmatrix} A & B \\ C & D \end{bmatrix} \equiv \textrm{Id} \pmod{n}$ and $2n$ divides the diagonals of $B$ and $C$.} over $R$
is a section of $\Hodge^\rho$ on $\Agn \otimes R$\ \footnote{At least when $g>1$. When $g=1$ we also need to check that the modular form stays bounded at infinity, or algebraically that the evaluation on the Tate curve is given by a Laurent series in $q$ with no negative terms.}.
For a given $(A,\pol,\Theta_{\pol}) \in \Agn(k)$ 
and  $w_A$ a basis of $k$-rational regular differentials on $A$, it can also be seen as a function
$\chi: (A,  \pol, \Theta_{\pol}, w_A) \mapsto k$,  such that $\chi(A, \pol, \Theta_{\pol}, \lambda w_A)=(\det
\lambda)^\rho \cdot \chi(A, \pol, \Theta_{\pol}, w_A)$ for any $\lambda \in \GL_g(\kbar)$.
Likewise, a Siegel modular form $\chi$ of weight $\rho$ and level~$1$\ \footnote{Meaning the full level group $\Gamma_g=Sp_{2g}(\ZZ)$ and not $\Gamma_1(1,2)$.} is a section of $\Hodge^{\rho}$ on the algebraic stack $\Ag$ of
principally polarized abelian schemes. In that case, we simply write  $\chi(A, \pol,w_A)$.
Let $\pol_{\Xgn}$ be the totally symmetric normalized relatively ample line bundle on
$\Xgn$ as in Section~\ref{sec:universal}. %
Let $\iota : \Spec k \to \Agn \overset{s}{\to} \Xgn$ corresponding to a closed point $(A,\pol,\Theta_A) \in \Agn(k)$. We have that $\iota^* \theta^{\Xgn}(0) = \theta^{A}(0)$, as projective coordinates. 
In the special case where $k=\CC$, let $\Omega$ be a Riemann matrix in the Siegel upper half-space $\mathbb{H}_g$  and let us denote  $\thetacarac{x_1}{x_2}(0,\Omega)$, the value at $0$ of the classical theta function with characteristic $(x_1,x_2) \in \QQ^{2g}$ \cite[p.192]{mumford-tata1}. We will refer to these complex values as \emph{theta constants} (in contrast with the theta coordinates when speaking about the $\theta_i^A(0)$).
Following \cite[Prop.~5.11]{mumford-tata3} (see also loc.~cit. Definition.~5.8 and p.~36), if 
$(A,\pol,\Theta_A)=\CC^g/(\ZZ^g+ \Omega \ZZ^g)$,
with its associated polarization induced by $\textrm{Im} \Omega^{-1}$ and associated canonical symmetric level structure induced by the canonical symplectic basis on the lattice,
then  $(\theta^{A}_i(0))_{i \in \Zn}$ is projectively equal  to $(\thetacar{0}{i/n}(0,\Omega/n))$  for  arbitrary lifts of $i \in \Zn$ to $\ZZ^g$.
In fact Mumford shows this equality for the adically defined theta
functions. For the level~$n$ algebraic theta functions, it suffices to
remark that both the algebraic $\theta_i(z)$ and analytic $\thetacar{0}{i/n}(z, \Omega/n)$ theta functions satisfy the canonical
irreducible representation of the Heisenberg group of level~$n$
\cite[Theorem~2 and definition p. 297]{MumfordOEDAV1}. 

We will use this projective equality to fix a particular choice of affine lifts over any field in the following way. Because of the transformation formula \cite[Cor.5.11]{mumford-tata1}, if we define for any $i,j \in \Zn$, 
\begin{equation}
\chi_{ij}(A,\pol,\Theta_A,(2i\pi dz_1,\ldots,2i\pi dz_g)) = \thetacar{0}{i/n}(0,\Omega/n)\cdot \thetacar{0}{j/n}(0,\Omega/n)
\label{eq:chiij}
\end{equation}
we get Siegel modular forms of weight $1$ and level $n$ over $\CC$. Since the Fourier coefficients of the theta constants belong to $\ZZ$, by the $q$-expansion principle \cite[p.140]{faltingschai}, this definition can be extended to a section of $\Hodge$ over $\ZZ[\frac{1}{n}]$ and therefore over $R$. Since the sections $(\chi_{ij})_{i,j \in \Zn}$ and $(\theta_i^{\Xgn}(0) \theta_j^{\Xgn}(0))_{i,j \in \Zn}$ are equal up to a constant over $\CC$,  for any $(A,\pol,\Theta_A) \in \Xgn(k)$ and $w_A$ a basis of $k$-rational regular differentials on $A$, $\chi_{ij}(A,\pol,\Theta_A,w_A)$ is an affine lift of $\theta_i^A(0) \cdot \theta_j^A(0)$.
This allows the following definition.

\begin{definition} \label{def:modularlift}
Let $(A,\pol,\Theta_{\pol}) \in \Agn(k)$ and $w_A$ a basis of regular differentials on $A$. A \emph{modular lift}, denoted
$\theta^A(0,\sqrt{w_A})=(\theta_i^A(0, \sqrt{w_A}))_{i \in \Zn}$, is an  affine lift of $\theta^A(0)$ 
such that for all $i,j \in \Zn$, $\theta_i^A(0, \sqrt{w_A}) \cdot \theta_j^A(0,\sqrt{w_A)} = \chi_{ij}(A,\pol,\Theta_{\pol},w_A)$. Notice that the modular lift is unique up to a common sign.

\end{definition}

\begin{remark}
We  consider the two by two products because they give modular
forms of weight one. The $\theta^A(0, \sqrt{w_A})$ themselves would be modular forms of
weight one half. But the line bundle $\pol_{\Agn}$ does not descend on
$\Ag$, only to a $\mu_2$-gerbe of $\Ag$~\cite{candelori}.
Since we only need to compute modular
forms of integral weight, this {\it ad hoc} definition is
sufficient and requires less abstract material. Notice also that as a consequence of
\cite[p.~82]{MumfordOEDAV2} and \cite[Th. 4.2.1]{candelori}, $\pol^2_{\Agn} \iso \Hodge$, which gives another purely algebraic proof of the modularity of $s^* (\theta_i^{\Xgn} \cdot \theta_j^{\Xgn})$.
In particular, a choice of basis of regular differentials gives a trivialization of
$\Hodge$, so a trivialization of $\pol^2_{\Agn}$ and corresponding affine lifts
for the $\chi_{ij}$.
\end{remark}

If we start with a principally polarized abelian variety $(A,\pol)$ over a field
$k$ with a $k$-rational basis of regular differentials $w_A$, we may need to go to
an extension to build the level $n$ structure $\Theta_{\pol}$ on $A$. Hence the  
$\theta_i^A(0, \sqrt{w_A})$ are not necessarily defined over $k$. However, consider a Siegel modular form
$\chi$ of level $1$ and of integral weight $\rho \geq 1$, written as a homogeneous polynomial $P$ of degree $2\rho$ in the theta constants of level $\Theta_{\pol}$ and with coefficients in $k$. As $2\rho$ is even, we can express $P$ as polynomial $Q$ in pairs of theta constants, and therefore  $P(\theta^A(0,\sqrt{w_A})) = Q((\chi_{ij}(A,\pol,\Theta_{\pol},w_A))) = \chi(A,\pol,w_A) \in k$.
This is important for our
application to the modular form $\chi_{18}$ in dimension $g=3$ (see  \cref{sec:serreobs}).

\begin{theorem} \label{th:modular}
Let $(A,\pol,\Theta_{\pol}) \in \Agn(k)$. Let $\ell$ be an integer prime to $np$ (or to $n$ if $p=0$). Let $K$ be a  $k$-rational totally isotropic subgroup for the Weil pairing of
$\pol^\ell$. Let $f: (A, \pol, \Theta_\pol) \to (B, \bpol, \Theta_\bpol)$ where
$B=A/K$, $f^\ast \bpol=\pol^\ell$ and $\Theta_\bpol$  be the unique 
symmetric theta structure of level $n$ on $\bpol$ compatible with
$\Theta_\pol$. Let $w_A$ be a basis of $k$-rational regular differentials on $A$ and $(\theta_i^A(0, \sqrt{w_A}))_{i \in \Zn}$ be a modular lift. Finally, let $r=1,2$ or $4$ depending on $\ell$ being a square, a sum of two square or not. Then the affine isogeny formula 
$\pi_3 \circ \pi_2 \circ \Pi_1^{-1}$
yields the products
  $(\theta_{i_1}^B(0, \sqrt{w_B})\times \cdots \times \theta_{i_r}^B(0, \sqrt{w_B}))_{i_1,\ldots,i_r \in \Zn}$ where $w_B$ is such that $f^\ast w_B=w_A$. Note that the product is uniquely defined except if $r=1$ in which case we get all constants up to a common sign.
\end{theorem}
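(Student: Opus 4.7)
The plan is to reduce the claim to an analytic verification over $\CC$ by a modularity argument, and then to propagate the result to arbitrary base rings via the $q$-expansion principle.

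First, I would record that the three morphisms $\Pi_1$, $\pi_2$, $\pi_3$ are defined on the universal abelian schemes (Theorem~\ref{th:isoproj}) by the same explicit polynomial formulas whether interpreted projectively or affinely. A trivialisation of $\Hodge$ (equivalently, a basis of differentials) induces via Equation~\eqref{eq:chiij} and Definition~\ref{def:modularlift} canonical affine lifts of all theta null points appearing in the isogeny formula. Consequently, the composition $\pi_3 \circ \pi_2 \circ \Pi_1^{-1}$ acquires a modular affine interpretation: on the moduli cover parameterising $(A, \pol, \Theta_\pol, K)$ together with a basis of differentials on $A$, its output is a section of $\Hodge^{\otimes r}$. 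The right-hand side of the claimed identity is a product of $r$ factors of $\chi_{ij}^B$-type and is also such a section, once the relation $f^* w_B = w_A$ is used to transport differentials from $A$ to $B$.

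Two sections of the same line bundle on a connected base that agree at a geometric generic point are equal. I would therefore verify the identity over $\CC$, using the classical analytic realisation $\theta_i^A(0, \sqrt{2i\pi dz}) = \thetacar{0}{i/n}(0, \Omega/n)$ recalled before Definition~\ref{def:modularlift}. Each step of the isogeny formula has a well-documented analytic counterpart: $\Pi_1^{-1}$ is Mumford's descent of level structure along the universal isogeny, producing theta constants at level $\ell n$ for the period matrix $\Omega_B$ of $B = A/K$; $\pi_2$ implements the Segre embedding as a term-wise product of theta constants; and $\pi_3$ is the analytic pullback along the endomorphism $F \colon B^r \to B^r$ satisfying $\transp{F}F = \ell I_r$, i.e., the Koizumi-Kempf formula. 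A direct computation then identifies the composite with the claimed products of modular lifts on $B$ for the basis $w_B$ with $f^*w_B = w_A$.

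The main obstacle is the careful bookkeeping of scalar factors in step $\pi_3$. The endomorphism $F$ acts on differentials of $B^r$ by multiplication by $F$, contributing a factor $\det F$ under pullback, where $(\det F)^2 = \ell^r$. For $r = 2$ or $r=4$, $\det F$ is a nonzero integer and any sign ambiguity appears in an even power within the product of $r$ modular lifts, hence cancels. For $r = 1$, the condition $F^2 = \ell$ forces $F = \pm \sqrt{\ell}$, and this sign is precisely the source of the global sign ambiguity stated in the theorem. Once the identity holds over $\CC$, the $q$-expansion principle~\cite[p.~140]{faltingschai} extends it to $\ZZ[1/(\ell n)]$, and hence to any base field $k$ of characteristic prime to $\ell n$.
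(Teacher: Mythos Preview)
Your overall strategy matches the paper's: both reduce to an analytic verification over $\CC$ and then globalize. The paper globalizes via smoothness of $\Agn$ (hence torsion-freeness of the relevant line bundle), while you invoke the $q$-expansion principle; either works.

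However, the analytic verification you sketch has a genuine gap. The heart of the paper's proof is the explicit tracking of two scalar factors that must cancel, and your treatment misses both of them. First, in Step~1 ($\Pi_1^{-1}$), the level-$\ell n$ theta constants you obtain on $B$ are modular lifts with respect to $w'_B:=\tilde{f}^\ast w_A$, not with respect to $w_B$; since $f\circ\tilde{f}=[\ell]$, one has $w'_B=w_B/\ell$, so each $\theta_i^{B,\bpol^\ell}(0,\sqrt{w'_B})$ differs from $\theta_i^{B,\bpol^\ell}(0,\sqrt{w_B})$ by a factor $\ell^{-g/2}$. You do not mention this at all. Second, in Step~3 (Koizumi's formula), the constant is not ``$\det F$ acting on differentials'': the analytic formula \cref{eq@changement@niveau@gen0} carries an explicit combinatorial factor $c=[\Mat_{r\times g}(\ZZ)F^{-1}:\Mat_{r\times g}(\ZZ)]=\ell^{gr/2}$, coming from the number of terms in the sum, not from a Jacobian determinant. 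Your proposed factor $\det F=\pm\ell^{r/2}$ is off by the exponent~$g$ and arises from the wrong mechanism.

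The paper's argument is precisely that the $r$-fold product of the factors $\ell^{-g/2}$ from Step~1 equals $\ell^{-gr/2}=c^{-1}$, exactly cancelling the Koizumi constant in Step~3. Without identifying the Step~1 rescaling and the correct Step~3 constant, your ``direct computation'' cannot close. Your remarks about sign ambiguities for $r=2,4$ versus $r=1$ are correct in spirit but are a separate, minor point; the substantive cancellation $c\cdot\ell^{-gr/2}=1$ is what you need to supply.
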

\begin{proof}

  Using the results of \cref{sec:universal}, the statement of this
  theorem makes sense over $\ZZ[\frac{1}{n \ell}]$. We will thus prove this
  theorem for $\Xgn \to \Agn$ over $\ZZ[\frac{1}{n \ell}]$, the result will then be valid
  for any field of characteristic prime to $n \ell$.

  We note that the theta coordinates computed by the isogeny formula
  give sections of the very ample line bundle $\pol_{\Agrn}$ of $\Agrn$ over
  $B^r$. Thus the $s_i$ can also be interpreted as sections of $\pol^r_{\Agn}$
  over $B$. We are thus trying to prove the equality of two sections of $\pol_{\Agn}^r$, i.e. that for any $i_1, \ldots, i_r$ the corresponding
  theta null point of coordinates $(i_1, \ldots, i_r)$ computed by the
  isogeny formula is equal to
$(\theta_{i_1}^B(0, \sqrt{w_B})\cdots  \theta_{i_r}^B(0, \sqrt{w_B}))$. 

  Since $\Agn$ is smooth, $\pol_{\Agn}$ is without torsion, so we only need
  to check this equality over $\CC$. The abelian variety $A/\CC$ is
  isomorphic to a torus $A \iso \CC^g/(\ZZ^g \oplus \Omega\ZZ^g)$. First it
  is easy to check that if we change our affine lift by multiplying it by
  $\lambda \in \CC$, then the result of the isogeny formula is multiplied
  by $\lambda^r$. Indeed in Step~1 (in affine coordinates), the affine lift
  of the points of $K$ are normalized with respect  to the affine lifts of
  the theta null point. Multiplying the theta null point by $\lambda$
  multiply the points $Q \in \Pi_1^{-1}(A,K)$ by $\lambda$. Then applying
  the Segre embedding multiply the theta null point by $\lambda^r$, and
  Koizumi's formula does not change this constant.

  Changing the basis of regular differentials by a matrix $M \in \GL_g(\CC)$ changes the value of a modular lift by $\lambda=\sqrt{\det(M)}$ for a fixed choice of the square root, since their pair products are weight $1$ modular forms.
  This changes both the modular forms $(\theta_{i_1}^B(0, \sqrt{w_B}) \cdots  \theta_{i_r}^B(0, \sqrt{w_B}))$ and the result of the isogeny formula by a factor $\lambda^r$.
  So we may fix the differentials on $A$ to be $w_A=(2i \pi dz_1, \ldots, 2 i \pi dz_g)$ of $\CC^g$.

  By \cref{eq:chiij}, the corresponding modular lift of the theta null point on $A$ is
  then given by the analytic theta constants
  $\theta^A_i=\thetacarac{0}{i/n}(0, \Omega/n)$ (where we do a slight abuse
  of notations in identifying $i \in \Zn$ to a fixed lift to $\ZZ^g$).

  We can then keep track of the constants in each of the three steps of the
  isogeny formula of Section~\ref{sec:universal}.

{\bf  Step~1:} we compute an affine lift of a theta null point of level $\ell n$  on
  $B$, such that the isogeny theorem applied to $\tilde{f}$ gives our theta
  null point on $A$. From our hypothesis, $K$ corresponds to the subgroup
  $\frac{1}{\ell} \ZZ^g / \ZZ^g$, so $B= \CC^g/(\ZZ^g \oplus \ell\Omega\ZZ^g)$ and $f: z \mapsto \ell
  z$. The contragredient isogeny $\tilde{f}: B \to A$ is then given by
  $\tilde{f}: B \to A, z \mapsto z$.
  So we see that one possible lift for the theta null point of level $\ell
  n$ on $B$ is given by $\thetacarac{0}{\frac{i}{\ell n}}(0, \frac{\ell \Omega}{\ell n})$.
  By plugging any $i$ divisible by $\ell$ we see
  that the constant involved in Step 1 is $1$.
  Indeed, the isogeny theorem (the pullback $\tilde{f}$ of $\pi_1$ to $B$) is simply given in
  terms of analytic theta coordinates by
  $$\left(\thetacarac{0}{\frac{i}{\ell n}}(z, \frac{\ell \Omega}{\ell
  n})\right)_{i \in \Zln}
    \mapsto 
\left(\thetacarac{0}{\frac{i}{\ell n}}(z, \frac{\Omega}{n})\right)_{i \in \Zln, \ell \mid i} =
\left(\thetacarac{0}{\frac{i}{n}}(z, \frac{\Omega}{n})\right)_{i \in \Zn}.$$
  Algebraically, this
  means that we are computing
  $(\theta_i^{B,\Mpol^\ell}(0, \sqrt{w'_B}))_{i \in \Zln}$  where $w'_B$ is such that
  $\tilde{f}^\ast w_A=w'_B$. By definition of the contragredient isogeny,
  we have that $w'_B=w_B/\ell$ (as seen analytically by the fact that the
  map $f$ above acts by $\ell$ on the tangent space).

{\bf  Step~2:} the Segre embedding simply consists on taking the sections induced by the basis of regular differentials on $B^r$
given  by the pullbacks of the differentials $w'_B$  by the projections on each factor. 
 Notice that the theta constants on $B^r$ are then easily related to the ones on $B$ since
  $\thetacarac{0 \, 0}{b_1 b_2}\left(0, \ell \textrm{diag}( \Omega,
  \Omega)\right) = \thetacarac{0}{b_1}(0, \ell \Omega) \thetacarac{0}{b_2}(0,
 \ell \Omega)$. %

 {\bf Step~3:} For this step, we need a version of Equation~\eqref{eq:koizumi} taking into account the possible multiplicative constant. This is given for instance in \cite[Th\'eor\`eme~7.2.1]{cosset2011thesis}
 \begin{equation}
   c \cdot \thetacarac{0}{i_1}(Y_1,\ell\Omega/n)\cdots \thetacarac{0}{i_r}(Y_r,\ell\Omega/n)=
   \sum_{\mathclap{[t_1, \ldots, t_r] \in \Mat_{r\times
   g}(\ZZ)F^{-1}/\Mat_{r \times g}(\ZZ)}}
         \thetacarac{0}{j_1}(X_1+t_1,\Omega/n) \cdots \thetacarac{0}{j_r}(X_r+t_r,\Omega/n),
   \label{eq@changement@niveau@gen0}
 \end{equation}
   where $F \in M_r(\ZZ)$ is such that ${^t F}F=\ell \textrm{Id}$, $Y$ in~$(\CC^g)^r$, $X=YF^{-1} \in (\CC^g)^r$, $i \in \QQ^r$,  $j=iF^{-1}$ and $$c=[\Mat_{r\times g}(\ZZ)F^{-1}: \Mat_{r \times g}(\ZZ)]=
  [\Mat_{r \times g}(\ZZ):\Mat_{r \times g}(\ZZ)F] =
  \ell^{gr/2}.$$

 Taking into account that
  $F^{-1}=\frac{1}{\ell}\transp{F}$, that the kernel of $F$ in $\Zl^r$
  is exactly the image of $\transp{F}$, and taking $Y_i=0$, we can
  rewrite \cref{eq@changement@niveau@gen0} in terms of modular lifts
  $$ c \cdot \theta_{i_1}^{B, \Mpol}(0,\sqrt{w_B'}) \cdots \theta_{i_r}^{B, \Mpol}(0,\sqrt{w_B'}) = 
  \sum_{\substack{(j_1, \ldots, j_r) \in {\Zln}^r \\
  F(j_1,\ldots,j_r)=(i_1,\ldots, i_r)}}
  \theta_{j_1}^{B, \Mpol^\ell}(0,\sqrt{w'_B}) \cdots \theta_{j_r}^{B, \Mpol^\ell}(0,\sqrt{w'_B}).$$
  Since $w_B'=w_B/\ell$ we have $\theta^B(0,\sqrt{w_B'}) = \ell^{-1/2} \cdot \theta^B(0,\sqrt{w_B})$. This kills the constant $c$ and we get the result (up to a fixed sign if $r=1$ because there is no way to choose a canonical square root of $\ell$ in a field $k$ in general).%
\end{proof}

This theorem shows that, given a Siegel modular form $\chi$ of \emph{even} weight as a polynomial $P$ in the theta constants with coefficients in $k$, we can compute the value $\chi(B,\bpol,\Theta_B,w_B)$ from the corresponding modular lift on $(A,\pol)$. In practice \cite{DRAVIsogenies} does not compute all products
$(\theta_{i_1}^B(0, \sqrt{w_B})\cdot \dots \cdot \theta_{i_r}^B(0, \sqrt{w_B}))_{i \in \Zn}$ but only the products
$t_i := (\theta_{i}^B(0, \sqrt{w_B})\cdot \theta_{0}^B(0, \sqrt{w_B}) \dots \cdot \theta_{0}^B(0, \sqrt{w_B}))_{i \in \Zn}$, since this is enough for isogenies. It is also enough in our case: the weight being even means that each monomials of $P$ in the theta constants has a degree multiple of $4$ (and hence of $r$). We then get 
$$\chi(B,\bpol,\Theta_{\bpol},w_B) = P(\theta_i^B(0,\sqrt{w_B})) = t_0^{-\frac{(r-1) \rho}{r}} \cdot P(t_i).$$

The modular forms we will consider are written as polynomials in the theta constants with half characteristics and not in the algebraic theta of level~$4$. However it is easy to convert one into the other: see Remark~\ref{rem:alg2ana}

\subsection{An algebraic version of Thomae's formula}
\label{sec:thomae}

If $E:y^2=F(x)$ is an elliptic curve defined over $k$, we would like to
compute the modular lift of the theta null point of level~$4$ with respect to the
$k$-rational differential $w=dx/y$. Over $k \subset \CC$, the expression of the fourth powers of theta constants can be seen as an elementary case of Thomae's formula \cite[p.121]{mumford-tata2} for hyperelliptic curves (although a sign remains unspecified). For dimension 1, one could also use $\sigma$ functions as in \cite[p.55]{akhiezer}, but one still only gets expression for the fourth powers of the theta constants. We will reprove these formulas in the following lemma and show that one can take arbitrary fourth roots. This will be useful for the computation of Siegel modular forms of even weight at $(B,\bpol,w_B)$ in the isogeny class of $E^g$. 

\begin{lemma}[Analytic form of Thomae's formula]\label{lem:fourth}
Let $E$ be an elliptic curve with Weierstrass equation $y^2=F(x)$ defined over over $\CC$.
Let $e_1,e_2,e_3$ be the roots of $F$. Fix arbitrarily three fourth roots $a_1,a_2,a_3$ of $e_i-e_j$ for $(i,j) \in ((2,3),(1,2),(1,3))$.
There exists a basis $\delta_1,\delta_2$ of $H_1(E,\ZZ)$ such that if we denote $[\omega_1,\omega_2]=[\int_{\delta_1} dx/y,\int_{\delta_2} dx/y]$ then $\tau= \omega_2/\omega_1 \in \HHb_1$ and
$$\sqrt{c} \cdot \thetacarac{0}{0}(\tau) =  a_3, \quad \sqrt{c} \cdot \thetacarac{1/2}{0}(\tau) =  a_2, \quad \sqrt{c} \cdot \thetacarac{0}{1/2}(\tau) =  a_1$$
with $c=\frac{2 i \pi}{w_1}$ for an arbitrary fixed square root of $c$.
\end{lemma}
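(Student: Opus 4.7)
The plan is to first establish the identity for one specific basis of $H_1(E, \ZZ)$ using the classical Thomae formula (which only gives the fourth powers), and then to leverage the action of the principal congruence subgroup $\Gamma(2) \subset \SL_2(\ZZ)$ together with the sign ambiguity of $\sqrt{c}$ to realize arbitrary fourth roots.

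First, I would fix an oriented basis $(\delta_1^0, \delta_2^0)$ of $H_1(E, \ZZ)$ with $\tau^0 = \omega_2^0/\omega_1^0 \in \HHb_1$, where $\omega_i^0 = \int_{\delta_i^0} dx/y$, and use the Weierstrass parametrization $\CC/(\omega_1^0 \ZZ \oplus \omega_2^0 \ZZ) \iso E(\CC)$ to identify the three non-trivial $2$-torsion points $\omega_1^0/2$, $\omega_2^0/2$, $(\omega_1^0 + \omega_2^0)/2$ with the points $(e_i, 0)$ in some prescribed order. The classical Thomae formula in genus~$1$, or equivalently the theory of Weierstrass $\sigma$-functions, see e.g.~\cite[p.~121]{mumford-tata2} and~\cite[p.~55]{akhiezer}, then yields
\[
(c^0)^2\,\thetacarac{0}{0}(\tau^0)^4 = \varepsilon_3(e_1 - e_3), \qquad
(c^0)^2\,\thetacarac{1/2}{0}(\tau^0)^4 = \varepsilon_2(e_1 - e_2), \qquad
(c^0)^2\,\thetacarac{0}{1/2}(\tau^0)^4 = \varepsilon_1(e_2 - e_3),
\]
where $c^0 = 2i\pi/\omega_1^0$ and $\varepsilon_i \in \{\pm 1\}$ are fixed signs depending on the conventions used (which will be absorbed into the arbitrary fourth roots below). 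For any chosen $\sqrt{c^0}$, this gives a particular triple $a_i^0 := \sqrt{c^0}\,\thetacarac{\cdot}{\cdot}(\tau^0)$ satisfying the statement.

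For arbitrary fourth roots $a_i$, write $a_i = \zeta_i a_i^0$ with $\zeta_i \in \mu_4$; I need to realize $(\zeta_1, \zeta_2, \zeta_3)$ by a change of basis. To preserve the correspondence between theta characteristics and specific differences of roots, I restrict to bases in the $\Gamma(2)$-orbit of $(\delta_1^0, \delta_2^0)$; the quotient $\SL_2(\ZZ)/\Gamma(2) \iso S_3$ would instead permute the $e_i$. Under the generator $\tau \mapsto \tau + 2$, a direct computation on the theta series shows that only $\thetacarac{1/2}{0}$ is multiplied by $i$, while $\omega_1$ (hence $\sqrt{c}$) is unchanged. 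A dual generator of $\Gamma(2)$, after accounting for its automorphy factor and the change it induces on $\omega_1$, similarly multiplies $\thetacarac{0}{1/2}$ alone by $i$. Combined with the global ambiguity $\sqrt{c} \mapsto -\sqrt{c}$, these operations generate the group $\mu_4^3/\langle(-1,-1,-1)\rangle$, which has order $32$ and acts simply transitively on the set of admissible triples of fourth roots.

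The main obstacle is the careful bookkeeping of the transformation law of theta constants under $\Gamma(2)$, in particular verifying that the automorphy factor $(c\tau+d)^{1/2}$ combines correctly with the change of $\sqrt{c}$ induced by $\omega_1 \mapsto (a+b\tau)\omega_1$ so that the net effect on $\sqrt{c}\,\thetacarac{\cdot}{\cdot}(\tau)$ is indeed a fourth root of unity. A conceptually cleaner approach is to observe that the pair products $\sqrt{c}\,\theta_i \cdot \sqrt{c}\,\theta_j$ are weight-$1$ modular forms, as already exploited in~\eqref{eq:chiij}; this reduces the verification to tracking a multiplicative character $\Gamma(2) \to \mu_4$ on each factor.
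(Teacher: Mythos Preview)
Your strategy coincides with the paper's: establish Thomae's fourth-power identities for one fixed oriented basis of $H_1(E,\ZZ)$, then act by elements of $\Gamma(2)$ (together with the free sign of $\sqrt c$) to reach an arbitrary triple of fourth roots. The paper carries out the first step by an explicit theta-embedding of the torus rather than by citing Thomae, but that is only a difference of exposition.

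The genuine gap is in your transitivity claim. The two generators you name, $\tau\mapsto\tau+2$ and its ``dual'' $\tau\mapsto\tau/(2\tau+1)$, act on the triple $\bigl(\sqrt c\,\thetacarac{0}{1/2},\ \sqrt c\,\thetacarac{1/2}{0},\ \sqrt c\,\thetacarac{0}{0}\bigr)$ by $(1,i,1)$ and, up to the global sign, $(i,1,1)$. Together with the ambiguity $\sqrt c\mapsto-\sqrt c$ these generate only $\mu_4\times\mu_4\times\mu_2\subset\mu_4^{\,3}$, of order~$32$; equivalently the image in $\mu_4^{\,3}/\langle(-1,-1,-1)\rangle$ has order~$16$, not~$32$. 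In particular you can never multiply $\thetacarac{0}{0}$ alone by a primitive fourth root of unity, so arbitrary choices of $a_3$ are not attainable and the action is not simply transitive as you assert.

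What is missing is the contribution of $-I\in\Gamma(2)$. It fixes $\tau$ but sends $\omega_1\mapsto-\omega_1$, so $c\mapsto-c$ and the admissible square roots become $\pm i\sqrt c$; this yields the diagonal action $(i,i,i)$ modulo $(-1,-1,-1)$, which is exactly the missing coset. The paper makes this explicit by computing, besides $S^2$ and $(STS)^2$, the actions of $T^2$ and $(ST)^3$ on the $\alpha_i$ (both landing in $\langle(i,i,i)\rangle$), and then exhibits words in these four elements realising the three standard generators $(i,1,1),(1,i,1),(1,1,i)$ of $\mu_4^{\,3}$. Once you include $-I$ among your generators your argument goes through; with only the two parabolic generators of $\Gamma(2)/\{\pm I\}$ it does not.
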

\begin{proof}
Let $\tau \in \HHb_1$ and denote $$\vartheta_{00}(z) = \thetacarac{0}{0}(z,\tau),\; \vartheta_{10}(z) = \thetacarac{1/2}{0}(z,\tau), \; \vartheta_{01}(z) = \thetacarac{0}{1/2}(z,\tau),$$ and $\vartheta_{11}(z) = \thetacarac{1/2}{1/2}(z,\tau).$ When $z$ does not appear, it denotes the corresponding value at $z=0$.
 As in \cite[p.125]{farkaskra}, let us consider the map $\phi : \CC \to \PP^2$ given by $$(\vartheta_{00}^2(z) \vartheta_{11}(z) : \vartheta_{00}(z) \vartheta_{01}(z) \vartheta_{10}(z) : \vartheta_{11}^3(z)).$$ 
Using the divisors of these sections, one can prove that the image by $\phi$ of $\CC/(\ZZ+ \tau \ZZ)$ is the elliptic curve
$$E_2: Y_2^2 Z_2= X_2 (\beta X_2- \alpha Z_2) (\alpha X_2 + \beta Z_2)$$
where $\alpha=\vartheta_{10}^2{\vartheta_{00}^2}$ and $\beta=\vartheta_{01}^2{\vartheta_{00}^2}$. Letting $Y_2=Y_1 \vartheta_{10} \vartheta_{01}/\vartheta_{00}^2, X_2=X_1$ and $Z_2=Z_1$, we can transform further in
$$E_1: Y_1^2Z_1 = X_1 (X_1- \alpha/\beta Z_1) (X_1 + \beta/\alpha Z_1).$$
Then letting $Z_1=(\vartheta_{01}^2 \vartheta_{10}^2) Z_0 $ and finally $Y_1=Y_0/(\vartheta_{01} \vartheta_{10})$ and $X_1=X_0$ one gets
$$E_0 : Y_0^2Z_0 = X_0 (X_0- \vartheta_{10}^4(0) Z_0) (X_0 + \vartheta_{01}^4(0) Z_0)$$
Let us study the regular differential $w_0=d(X_0/Z_0)/(Y_0/Z_0)=\frac{1}{\vartheta_{00}^2} \cdot d(X_2/Z_2)/(Y_2/Z_2)$ on $E_0$. Since $w_2=d(X_2/Z_2)/(Y_2/Z_2)$ is regular, $\phi^*(w_2)$  is a constant multiple of $dz$.  Now
\begin{align*}
\phi^* w_2 &= 2 \cdot \frac{\vartheta_{00}(z)' \vartheta_{11}(z) - \vartheta_{11}(z)' \vartheta_{00}(z)}{\vartheta_{10}(z) \vartheta_{01}(z)} \\
&= -2 \frac{\vartheta_{11}'(0) \vartheta_{00}(0)}{ \vartheta_{10}(0) \vartheta_{01}(0)}  & (\textrm{evaluating at } z=0) \\
&= 2 \pi \frac{\vartheta_{00} \vartheta_{10} \vartheta_{01} \vartheta_{00}}{\vartheta_{10} \vartheta_{01}}  & (\textrm{Jacobi identity } \vartheta_{11}' = - \pi \vartheta_{00} \vartheta_{01} \vartheta_{10})\\
&= 2 \pi \vartheta_{00}^2.
\end{align*}
Hence if $\psi_0 : \CC/( \ZZ+\tau \ZZ) \to  E_0$ is the isomorphism composed from $\phi$ and the changes of variables we get that 
$\psi_0^* w_0=2 \pi dz$ (notice that this is not the natural $2 i \pi dz$ we chose before but we will take care of the extra factor $i$ when we choose the fourth root).

Now, let us start with $E: y^2=F(x)$. If we make the change of variable $X=x-e_2$, $Y=y$, then we get $E' : Y^2 = X (X-(e_1-e_2)Z)(X+(e_2-e_3)Z)$.
 If we integrate $w=d(X/Z)/(Y/Z)$ along a basis of the homology of $E'$, we get a torus $\CC/(\omega_1 \ZZ+ \omega_2 \ZZ)$ and up to a change of the order in the basis, we can assume that  $\tau=\omega_2/\omega_1 \in \HHb_1$ and $\psi : \CC/(\omega_1 \ZZ+ \omega_2 \ZZ) \to E'$ an 
 isomorphism such that $\psi^* w=dz$. Let $s : \CC/(\omega_1 \ZZ+ \omega_2 \ZZ) \overset{\sim}{\to} \CC/(\ZZ+\tau \ZZ)$ such that $z \mapsto z/\omega_1$. The composition 
$$\xymatrix{  \CC/(\omega_1 \ZZ+ \omega_2 \ZZ)  \ar[d]_{s} & &E'   \ar[ll]_{\psi^{-1}} \ar[d]^{\mu}  \\ \CC/( \ZZ+ \tau \ZZ)  \ar[rr]_{\psi_0}& & E_0 
}$$
defines an isomorphism $\mu : E' \to E_0$ such that $(X:Y:Z) \to (a^2 X : a^3 Y : Z)$ with $a \in \CC^*$.  After a possible change in the generators of the homology of $E'$ (by a lift to $\SL_2(\ZZ)$ of a change of basis of $E'[2]$), we can even 
assume that $\mu$ maps the roots $0$ to $0$, $e_1-e_2$ to  $\vartheta_{10}^4/a^2$ and $e_2-e_3$ to $\vartheta_{01}^4/a^2$. Note that $ e_1-e_3=(\vartheta_{10}^4+\vartheta_{01}^4)/a^2=\vartheta_{00}^4/a^2.$
Now $\mu^* w_0 = w/a = (\psi^{-1})^* \circ s^* \circ \psi_0^* \, w_0 = 2 \pi/\omega_1 \cdot w$. Hence $a= \omega_1/2 \pi$. This means that we have the equalities
$$a_2^4= e_1-e_2 = -c^2  \vartheta_{10}^4, \quad a_1^4=e_2-e_3= -c^2 \vartheta_{01}, \quad a_3^4=e_1-e_3 = - c^2 \vartheta_{00}.$$

To conclude, we must show that we can choose the basis of homology for $E$ in order to choose the fourth root of unity arbitrarily and get the correct result up to a common fourth root of unity. As the two-torsion points are now fixed, this boils down to find some matrices in $\SL_2(\ZZ)$ which are congruent to the identity modulo $2$. If we call  $S=\begin{pmatrix} 
1 & 1 \\
0 & 1 
\end{pmatrix}$ and $T =\begin{pmatrix} 
0 & 1 \\
-1 & 0 
\end{pmatrix}$, let $H=<S^2,T^2,(ST)^3,(STS)^2>$
and
$(\alpha_1,\alpha_2,\alpha_3)=(i \sqrt{c}\vartheta_{01},i \sqrt{c}\vartheta_{10},i \sqrt{c}\vartheta_{00})$. Notice that the $\alpha_i$s do depend on $\tau$ but also on $\omega_1$. The actions of $S$ and $T$ on the lattice induce  actions on the $\alpha_i$ which can be computed through the classical transformation formula \cite[Th.7.1]{mumford-tata1}. Namely
\[
\left\{
\begin{array}{r c l}
S. \alpha_1 &=& \alpha_3, \\
S. \alpha_2 &=& e^{i\pi/4}\alpha_2,\\
S. \alpha_3 &=& \alpha_1,
\end{array}
\right.
\textrm{and \;}
\left\{
\begin{array}{r c l}
T. \alpha_1 &=& \sqrt{-i}\alpha_2, \\
T. \alpha_2 &=& \sqrt{-i}\alpha_1,\\
T. \alpha_3 &=& \sqrt{-i}\alpha_3.
\end{array}
\right.
\]
Hence we get
$$\left\{
\begin{array}{r c l}
S^2. \alpha_1 &=& \alpha_1, \\
S^2. \alpha_2 &=& i\alpha_2,\\
S^2. \alpha_3 &=& \alpha_3,
\end{array}
\right.
\textrm{, \;}
\left\{
\begin{array}{r c l}
T^2. \alpha_1 &=& -i\alpha_1, \\
T^2. \alpha_2 &=& -i\alpha_2,\\
T^2. \alpha_3 &=& -i\alpha_3,
\end{array}
\right.
$$
and
$$\left\{
\begin{array}{r c l}
(ST)^3.\alpha_1 &=& i\alpha_1, \\
(ST)^3.\alpha_2 &=& i\alpha_2,\\
(ST)^3.\alpha_3 &=& i\alpha_3,
\end{array}
\right.
\textrm{, \;} 
\left\{
\begin{array}{r c l}
(STS)^2.\alpha_1 &=& -i\alpha_1, \\
(STS)^2.\alpha_2 &=& -\alpha_2,\\
(STS)^2.\alpha_3 &=& -\alpha_3.
\end{array}
\right.
$$
 The group $\mu_4^3$ has generators $u_1:=(i,1,1),u_2:=(1,i,1),u_3:=(1,1,i)$. The expressions above show that $g_1=(ST)^3 (STS)^2$ (resp. $g_2=S^2$, resp. $g_3=g_1^3g_2^3 (ST)^3$) acts  on $(\alpha_1,\alpha_2,\alpha_3)$ as $u_1$ (resp. $u_2$, resp. $u_3$). Starting from $(a_1,a_2,a_3)$ it is therefore possible to find a $\tau$ such that $(a_1,a_2,a_3) = (\sqrt{c} \vartheta_{01}, \sqrt{c} \vartheta_{10}, \sqrt{c} \vartheta_{00})$.
\end{proof}

\begin{remark} \label{rem:alg2ana}
 The algebraic theta functions of level~$4$, $(\theta_1, \theta_2,
\theta_3, \theta_4)$ analytically correspond to the theta functions
$(\thetacarac{0}{i/4}(z, \Omega/4))_{i \in \ZZ/4\ZZ}$. Going to these
functions from the standard level~$(2,2)$ analytic theta
$\thetacarac{a/2}{b/z}(2z, \Omega)$ is given by a change of variables
\cite{mumford-tata1}, \cite[p. 38]{cosset2011thesis}
\begin{equation}\label{eq:linchange}
  \begin{aligned}
  \theta_0(z)=\thetacarac{0}{0}(z,\Omega)+\thetacarac{1/2}{0}(z,\Omega),&\quad
   \theta_1(z)=\thetacarac{0}{1/2}(z,\Omega)+\thetacarac{1/2}{1/2}(z,\Omega),\\
  \theta_2(z)=\thetacarac{0}{0}(z,\Omega)-\thetacarac{1/2}{0}(z,\Omega),&\quad
   \theta_3(z)=\thetacarac{0}{1/2}(z,\Omega)-\thetacarac{1/2}{1/2}(z,\Omega),
  \end{aligned}
\end{equation}
where $\theta_i(z) = \thetacarac{0}{i/4}(z, \Omega/4)$.

The functions $\thetacarac{a/2}{b/2}(2z, \Omega)$ also have algebraic
analogues as  partial Fourier transforms over $\Ztwo$ of the functions $\theta_i$
as explained in \cite[p.~334]{MumfordOEDAV1} and \cite[Exemple~4.4.9]{DRphd}.
If $\theta_i$ is a theta function of level~$n$, the partial Fourier
transform is given for $\alpha \in \dZtwo$ by
\begin{equation}
\thetacaraco{\alpha}{i}=\sum_{j \in \Ztwo} \alpha(j) \theta_{i+j}.
\label{eq:theta22}
\end{equation}
Analytically, $\thetacaraco{\alpha}{i}(z)=\thetacarac{\alpha/2}{2i/n}(2z,2\Omega/n)$,
so if $n=4$ we do recover the theta functions of level $(2,2)$.\\
\end{remark}

All these expressions for the theta constants over $\CC$ are true over $k$. Indeed, pairing them will give modular forms with integral Fourier expansion, so we get similar expression for the modular lift, up to a common sign which can be swallowed in the choice of the fourth root.

\begin{corollary}[Algebraic form of Thomae's formula] \label{cor:algebraicthomae}
Let $E$  be an elliptic curve with Weierstrass equation $y^2=F(x)$ defined over a field $k$ of characteristic $p \ne 2$.
Let $e_1,e_2,e_3$ be the roots of $F$ in $\bar{k}$. Fix arbitrarily three fourth roots $a_1,a_2,a_3$ of $e_i-e_j$ for $(i,j) \in ((2,3),(1,2),(1,3))$.
Then there is a level~$4$ symmetric theta structure on $E$, such that a modular lift of the theta null point on $E$ with respect to the regular differential $dx/y$ is
\begin{equation}
  \begin{aligned}
  \theta^E_0(0_E, \sqrt{dx/y})&=a_2+a_3, \quad&\theta^E_1(0_E, \sqrt{dx/y})&=a_1,\\
  \theta^E_2(0_E, \sqrt{dx/y})&=-a_2+a_3, \quad&\theta^E_3(0_E, \sqrt{dx/y})&=a_1.
  \end{aligned}
\label{eq:thomae}
\end{equation}
\label{cor:thomae}
\end{corollary}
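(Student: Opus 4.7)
The plan is to derive the algebraic formulas~\eqref{eq:thomae} from Lemma~\ref{lem:fourth} together with the change of coordinates~\eqref{eq:linchange} that expresses the level-$4$ algebraic theta coordinates in terms of the half-characteristic theta constants, and then to spread the resulting identity out from $\CC$ to any base of characteristic $\ne 2$.

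Over $\CC$, pick the basis $(\delta_1,\delta_2)$ of $H_1(E,\ZZ)$ furnished by Lemma~\ref{lem:fourth}, set $\tau=\omega_2/\omega_1 \in \HHb_1$, and let $\phi \colon \CC/(\ZZ+\tau\ZZ) \iso E$ be the corresponding uniformization. The chain of changes of variables in the proof of Lemma~\ref{lem:fourth} shows that $\phi^*(dx/y)=\omega_1\,dz$, where $dz$ is the coordinate on the normalized torus. Equip $E$ with the level-$4$ symmetric theta structure transported through $\phi$ from the canonical one on $\CC/(\ZZ+\tau\ZZ)$, so that $\theta_i^E(0)=\thetacar{0}{i/4}(0,\tau/4)$ analytically. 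Plugging $\Omega=\tau$ into~\eqref{eq:linchange} and using the Jacobi vanishing $\vartheta_{11}(0,\tau)=0$ gives
\begin{align*}
 \theta_0^E(0) &= \vartheta_{00}(\tau)+\vartheta_{10}(\tau), & \theta_2^E(0) &= \vartheta_{00}(\tau)-\vartheta_{10}(\tau),\\
 \theta_1^E(0) &= \vartheta_{01}(\tau), & \theta_3^E(0) &= \vartheta_{01}(\tau).
\end{align*}
Substituting $\sqrt{c}\,\vartheta_{00}(\tau)=a_3$, $\sqrt{c}\,\vartheta_{10}(\tau)=a_2$ and $\sqrt{c}\,\vartheta_{01}(\tau)=a_1$ from Lemma~\ref{lem:fourth} yields
\[ \sqrt{c}\cdot\bigl(\theta_0^E(0),\theta_1^E(0),\theta_2^E(0),\theta_3^E(0)\bigr) = (a_2+a_3,\,a_1,\,a_3-a_2,\,a_1). \]
The relation $\phi^*(dx/y)=c^{-1}\cdot 2i\pi\,dz$, combined with the weight-one transformation rule of the Siegel modular forms $\chi_{ij}$ and the normalization~\eqref{eq:chiij} giving $\chi_{ij}(E,\pol,\Theta_E,2i\pi\,dz) = \theta_i^E(0)\theta_j^E(0)$, then forces
\[ \chi_{ij}(E,\pol,\Theta_E,dx/y) = c\cdot\theta_i^E(0)\theta_j^E(0) = \bigl(\sqrt{c}\,\theta_i^E(0)\bigr)\cdot\bigl(\sqrt{c}\,\theta_j^E(0)\bigr). \]
Thus $\bigl(\sqrt{c}\,\theta_i^E(0)\bigr)_i$ satisfies the pairwise-product characterization of Definition~\ref{def:modularlift}: it is a modular lift of $\theta^E(0)$ with respect to $dx/y$. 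The group-theoretic portion of the proof of Lemma~\ref{lem:fourth} moreover shows that each of the $4^3$ possible triples $(a_1,a_2,a_3)$ is realized by a suitable choice of homology basis, so~\eqref{eq:thomae} holds over $\CC$ for arbitrary fourth roots.

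For the descent to a general field $k$ of characteristic $p\ne 2$, note that both sides of each equality in~\eqref{eq:thomae} are polynomial in the $a_i$ with coefficients in $\ZZ$, while every pairwise product $\theta_i^E(0,\sqrt{dx/y})\,\theta_j^E(0,\sqrt{dx/y}) = \chi_{ij}(E,\pol,\Theta_E,dx/y)$ is the specialization to $(E,dx/y)$ of the Siegel modular form $\chi_{ij}$, which is defined over $\ZZ[\tfrac{1}{4}]$ by the $q$-expansion principle applied to~\eqref{eq:chiij}. The identity of pairwise products over $\CC$ therefore lifts to an identity of sections of $\Hodge$ over $\ZZ[\tfrac{1}{4}]$, and specializes to $k$; the residual common-sign ambiguity of Definition~\ref{def:modularlift} is absorbed once and for all in the free choice of the fourth roots $a_i$. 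The main obstacle is keeping track of the factor $\sqrt{c}$ in the passage from the analytic affine lift (normalized by $2i\pi\,dz$) to the modular lift (normalized by $dx/y$), and verifying that it is precisely compensated by the scaling in Lemma~\ref{lem:fourth} so that no extraneous constant remains.
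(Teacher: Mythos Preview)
Your proof is correct and follows essentially the same strategy as the paper: establish the formula over~$\CC$ by combining Lemma~\ref{lem:fourth} with the change of basis~\eqref{eq:linchange}, track the scalar $\sqrt{c}$ to identify it with the passage from the normalization $2i\pi\,dz$ to $dx/y$, and then descend via the universal family over $\ZZ[\tfrac12]$. The one organizational difference is that the paper first verifies \emph{algebraically} that the proposed point satisfies the Jacobi relation $a_3^4=a_2^4+a_1^4$ with $a_1a_2a_3\ne 0$, so that projective validity of the theta null point (for some level-$4$ structure) is established directly over~$k$ before invoking~$\CC$ only for the affine normalization; you instead fold both the projective and affine parts into a single descent argument through the $\chi_{ij}$, which is equally valid but slightly less self-contained.
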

\begin{proof}
  Define
$\thetacaraco{0}{0}(0_E) = a_3, \thetacaraco{1/2}{0}(0_E) = a_2, \thetacaraco{0}{1/2}(0_E) = a_1$.
  First we note that the first part of \cref{lem:fourth}
  is valid algebraically: we just need to replace the argument involving divisors by the algebraic Riemann relations instead.
  Indeed it is easy to check that the theta null point defined satisfy the Riemann relation $\thetacaraco{0}{0}(0_E)^4 = \thetacaraco{1/2}{0}(0_E)^4 + \thetacaraco{0}{1/2}(0_E)^4$ (this is the standard Jacobi relation to which Riemann relations reduce to in genus~$1$~\cite[p. 353]{MumfordOEDAV1}).
  Since we also have that $\thetacaraco{0}{0}(0_E)\thetacaraco{1/2}{0}(0_E)\thetacaraco{0}{1/2}(0_E)=a_1a_2a_3 \ne 0$, the theta null point we compute is valid projectively by \cite[p. 353]{MumfordOEDAV1}.
  This also proves that each choice of fourth root is valid.\footnote{Alternatively, the affine modular action of $\Gamma/\Gamma(4,8)$ induces a
  projective action \cite[Lemme~6.2.1]{cosset2011thesis} which holds true algebraically, as automorphisms of the
  Heisenberg group of level~$4$. So the same generators $g_1, g_2$ and
  $g_3$ as in the end of \cref{lem:fourth} acts by fourth-root of unity
projectively.}

  It remains to
  check that the affine lift given by \cref{eq:thomae} corresponds to the
  trivialization coming from the differential $w=dx/y$. Since the
  construction is valid over the universal elliptic curve with a level~$4$
  symmetric theta structure, whose moduli space is defined over $\ZZ[1/2]$,
  by considering the pullback to $\CC$ we may assume that $E$ is defined
  over $\CC$, as in the proof of \cref{th:modular}.
  Looking at the proof of \cref{lem:fourth},  we see that the
  isomorphism between $E$ and $E'$ does not change the differential $w$, while the one from
  $E'$ to $E_0$ acts by $a=2\pi/\omega_1$. Correcting for this last factor
  yields
\begin{equation}
\thetacaraco{0}{0}(0_E, \sqrt{dx/y}) = a_3, \quad \thetacaraco{1/2}{0}(0_E, \sqrt{dx/y}) = a_2, \quad \thetacaraco{0}{1/2}(0_E, \sqrt{dx/y}) = a_1.
\label{eq:thomae_an}
\end{equation}
Applying the linear change of variable \cref{eq:linchange} to
\cref{eq:thomae_an} yields \cref{eq:thomae}.
\end{proof}

\subsection{Computing a Siegel modular form on the isogenous variety}
\label{sec:siegelisogeny}
Combining Corollary~\ref{cor:thomae} with  Theorems~\ref{th:isogeny_formula} and Theorem~\ref{th:modular} gives the following theorem and Algorithm~\ref{algo:modular}.

\begin{theorem}
  \label{th:siegelmodular}
Let $g$ be a positive integer, $(E_i/k)_{1\leq i \leq g}$ be elliptic curves, $K$ be a $k$-rational totally isotropic subgroup of $\prod_i E_i$  of order $\ell^g$ prime to $2 p$ (or just prime to $2$ if $p=0$). Let $B=(E_1 \times \cdots \times E_g)/K$ with the principal polarization induced by the product polarization on $E_1 \times \cdots \times E_g$ and let $f: \prod_i E_i \to B$ be the quotient isogeny. 
Finally define $w_B$ such that $f^\ast w_B=(p_1^* dx_1/y_1, \ldots, p_g^* dx_g/y_g)$ where $p_i : E_1 \times \cdots \times E_g \to  E_i$ is the canonical projection. Let $r=1,2$ or $4$ depending on $\ell$ being a square, a sum of two squares or not.
  Algorithm~\ref{algo:modular} computes the
  products  $\theta_{i_1}^B(0,\sqrt{w_B}) \cdots \theta_{i_r}^B(0,\sqrt{w_B})$ of any $r$ modular lifts in time $O(\ell^{g \max(1,r/2)})$ operations in the
  field of definition of the points of $K$.
 Given a Siegel modular form $\chi$ of even weight as a polynomial $P$ in the theta constants with coefficients in $k$, Algorithm~\ref{algo:modular} also computes the value $\chi(B,\bpol,w_B) \in k$. 
  \label{th:finalalgo}
\end{theorem}

\begin{remark}
We can make several comments about this result.
\begin{itemize}
  \item
  Note that during the execution of the algorithm, we only need to take care to compute the modular lift of the theta null point. Indeed, apart from the theta null point, we only need to compute projective coordinates for the points in the kernel, the computation of $\Pi_1^{-1}$ will take care of normalizing these coordinates with respect to our choice of affine lift of the theta null point.
  \item We only require $\chi$ to be of even weight $w$ if $r=4$. Otherwise
    given the $r$-fold products
    $$\theta_{i_1}^B(0,\sqrt{w_B}) \cdots \theta_{i_r}^B(0,\sqrt{w_B})$$
    we can evaluate a modular form of odd weight.
  \item We do not need to evaluate all the $r$-fold products, but only the
    ones of the form
    \[t_i= \theta_{i}^B(0,\sqrt{w_B}) \cdots \theta_{i}^B(0,\sqrt{w_B})\]
    (provided $\theta_{0}^B(0,\sqrt{w_B}) \neq 0$).
    If $\chi$ is of weight $w$, it can then be evaluated as
    $\chi(B,\bpol,w_B)=P(t_i)/t_0^{w (r-1)/2}$.
  \item If the modular form $\chi$ that can be written as a
    polynomial with respect to the level~$2$ theta constants, we can do the
    whole isogeny computation in level~$2$. This gains a factor $2^g$ in
    the number of coordinates to compute.
\end{itemize}
\end{remark}

\begin{algorithm}[H]
  \label{algo:thetaEg}
\begin{algorithmic}[1]
  \Require{Elliptic curves $E_i/k$   with equation  $y^2=(x-e_{1i})(x-e_{2i})(x-e_{3i})$ where $k$ is of characteristic $p$ different from $2$, a $k$-rational totally isotropic subgroup $K$ of $A=\prod_i E_i$ of order $\ell^g$ prime to $2 p$ (or just prime to $2$ if $p=0$). A Siegel modular form $\chi$ of even weight as a polynomial $P$ in the theta constants with coefficients in $k$.}
 \Ensure{The theta null point of level $4$ and the value $\chi(B,\bpol,w_B)$ where $B=A/K$ with $\bpol$ the polarization induced by the product polarization on $A$ and $w_B$ such that $f^*w_B=(p_1^* dx_1/y_1, \ldots, p_g^* dx_g/y_g)$ where $f : A \to B$ is the quotient isogeny and  $p_i : E_1 \times \cdots \times E_g \to  E_i$ is the canonical projection.}
\State{For all $1 \leq i \leq g$, define $\theta'^{E_i}_{0}=\sqrt[4]{e_{1i}-e_{3i}},  \theta'^{E_i}_1= \sqrt[4]{e_{1i}-e_{2i}}, \theta'^{E_i}_2 =\sqrt[4]{e_{2i}-e_{3i}}$ for arbitrary choices of the roots.}
\State{Compute $\theta^{E_i}_{0}(0,\sqrt{dx_i/y_i})=\theta'^{E_i}_{0} +\theta'^{E_i}_{1}$, $\theta^{E_i}_{2}(0,\sqrt{dx_i/y_i})=\theta'^{E_i}_{0} -\theta'^{E_i}_{1}$ and $\theta^{E_i}_{1}(0,\sqrt{dx_i/y_i})=\theta^{E_i}_{3}(0,\sqrt{dx_i/y_i})=\theta'^{E_i}_{2}$ for all $1 \leq i \leq g$.}
\State{Compute all $\theta^A_{(i_1, \ldots, i_g)}(0,\sqrt{w_A})=\theta^{E_1}_{i_1}(0,\sqrt{dx_1/y_1}) \cdots \theta^{E_g}_{i_g}(0,\sqrt{dx_n/y_n})$ for all $(i_1,\ldots,i_g) \in \Zfour$.}
  \State{For all $1 \leq i \leq g$ and for all  $x=(x_1,\ldots,x_g) \in K\setminus \{0\}$, compute the theta coordinates $\left(\theta^{E_i}_j(x_i)\right)_{j \in \ZZ/4\ZZ}$.}
  \State{Compute  for all $j=(j_1,\ldots,j_g) \in \Zfour$ and  for all $x=(x_1,\ldots,x_g) \in K \setminus \{0\}$
$\theta^{A}_j(x) = \theta^{E_1}_{j_1}(x_1) \cdots \theta^{E_g}_{j_g}(x_g)$.}
\State{Use the affine version of the isogeny formula to compute $t_i = \theta_i^B(0,\sqrt{w_B}) \cdot \theta_0^B(0,\sqrt{w_B}) \cdots  \theta_0^B(0,\sqrt{w_B})$ which is a product of $r$ factors with $r=1$ if $\ell$ is a square, $r=2$ if $\ell$ is the sum of two positive squares and $r=4$ otherwise.}\\
\Return{$(t_i)_{i \in \Zfour}$ and  $t_0^{-\frac{(r-1) \rho}{r}} \cdot P(t_i).$}
\end{algorithmic}
 \caption{Algebraic computation of the theta null point and a Siegel modular form of even weight} \label{algo:modular}
\end{algorithm}

\section{Application to defect-0 curves of genus at most 4}

\label{sec:ex}

 Let $C$ be a curve of genus $g>0$ over $\FF_q$ with $q=p^m$. The Hasse-Weil-Serre bound asserts that $\# C(\FF_q) \leq 1+q+ gm$ where 
  $m=\lfloor 2 \sqrt{q} \rfloor$. A curve which number of rational points reaches with bound is called a \emph{defect-$0$ curve}.
   When $g>2$, it is not known in general for a given field $\FF_q$ whether a defect-$0$ curve $C/\FF_q$ of genus $g$ exists. 
 If it does, $\Jac C$ is isogenous to the $g$-power of an elliptic curve $E$ with trace $-m$. In order to see if such a curve exists,
 we therefore start by enumerating the  indecomposable principally polarized abelian varieties $(A_i,\LL_i)$ of dimension $g$ in the isogeny class of $E^g$. When $m$ is prime to $q$ and hence $E$ is ordinary,
 we have seen in Section~\ref{sec:effectivekernel} how to describe all of them  as a quotients of $E^g$ by given maximal isotropic subgroups $K \subset E[\ell_1] \times \cdots \times E[\ell_g]$. When we can moreover choose $\ell=\ell_1=\ldots=\ell_g$ odd, prime to the characteristic of $\FF_q$ (see the condition in Theorem~\ref{ellid}) and $K$ totally isotropic, we can use Algorithm~\ref{algo:thetaquotient} to compute the theta null point of level $4$ for each $(A_i,\LL_i)$.
 
 Now, we need to single out the ones which are Jacobians of curves of genus $g$ over $\FF_q$. By \cite{oort-ueno}, we know that any indecomposable principally polarized abelian variety $(A,\LL)$ of dimension $g\leq 3$ is the Jacobian of a curve $C_0$ of genus $g$ over $\bar{\FF}_q$. When $g=4$, this is not the case, but we will be able to distinguished them computing a certain Siegel modular form using Algorithm~\ref{algo:modular}, see Section~\ref{sec:g4}. However if $(A,\LL)$ is a Jacobian of dimension $4$ over $\bar{\FF}_q$ there is currently no way to check if it is also a Jacobian over $\FF_q$.
As for $g \leq 3$, notice that there is a big difference between the genus $2$ and genus $3$ case when dealing with the existence of $C$ over $\FF_q$. For the genus $2$, this is automatic: the existence of an indecomposable principally polarized abelian surface over $\FF_q$ in the class of $E^2$ is enough to ensure the existence of the curve $C$.
 For genus $3$ curves though, there may be an arithmetic obstruction as we shall recall in Section~\ref{sec:serreobs}. As we shall see this obstruction can be computed from the value of a Siegel modular form. %

For $g=2$ or $3$, we can even get an equation for the curve $C$ when it exists. In genus $2$, the construction of such a curve from its theta null point is classical and we refer for instance to \cite{DRniveau} ; in genus $3$, the formulae depend on the curve being hyperelliptic or not, which can be distinguished by exactly one of the $36$ even theta coordinates being $0$ or none.  In the hyperelliptic case, one can use \cite{weng}\footnote{
\cite{BILV} noticed that there are some mistakes in this article of Weng and \cite[Appendix]{lario2020inverse} gives a correct implementation (see also
 \cite{CurveReconstructionCode}%
). However, we did not try to implement the reconstruction in the genus $3$ hyperelliptic case.}
 to construct first a model $C_1$ over $\bar{\FF}_q$. Then one computes Shioda invariants\footnote{or computes them directly from the theta constants using for instance \cite{garcia-shioda} and overpass the difficulties mentioned above.} and then reconstruct via \cite{lercierhyp} when $p > 7$. In the non-hyperelliptic case, one can use Weber's formulae (\cite[p.108]{weber}, see also \cite{Fiorentino}) to get first a curve $C_1$ over an extension $\FF_{q^e}$ of $\FF_q$ ($p \ne 2$). To get an equation of $C_0$ over $\FF_q$, we implemented an explicit Galois descent taking advantage of the fact that $C_1$, being given with its full level-$2$ structure, has all its bitangents defined over $\FF_{q^n}$. Hence, all isomorphisms between $C_1$ and its Galois conjugates over $\FF_e$ are defined over $\FF_{q^e}$ as well.

It may still be that $\Jac C_0$ is not isomorphic over $\FF_q$ to the chosen $(A,\LL)$ as $C_0$ may be a twist of the right curve $C$. If the geometric automorphism group of $C$ is trivial (which can be read from the automorphism group of the lattice), then the curve has no automorphism, hence no non-trivial twist and $C_0 \simeq C$. Otherwise, one has to compute the list of all twists: in the hyperelliptic case see \cite[Sec.4.6]{lercierhyp} (implemented in \texttt{Magma}), and in the non-hyperelliptic case see \cite[Sec.4]{LRRS14}.

To conclude, it is then enough to check among the twists which ones are defect-$0$ curves over $\FF_q$, which can be achieved through naive point counting algorithms. Hence for $g=2$ and $3$ our algorithms provide an explicit list of all isomorphism classes of defect-$0$ curves over $\FF_q$.
\begin{remark}
A different way to do so is to pick a random $\FF_q$-rational divisor $D \in \Jac C'(\FF_q)$, and check if $(1+q-\textrm{Trace}(E))^g D =0$. 
   A better way would be to select the right Galois descent directly by keeping track of the Galois action on the two torsion points of $E^g$ through the isogeny. This could actually be achieved since a more general isogeny formula exists which can also be applied to an arbitrary torsion point of $E^g$. We did not implement this method yet.
\end{remark}

 \subsection{Curves of genus 2}
 
Let us give some examples to illustrate our algorithms. We start with a very simple one.
\begin{example} \label{ex:genus2}
 Let $E/\FF_{61}: y^2=x^3 + 11x + 17$ be an elliptic curve such that $R:=\ZZ[\pi]=\ZZ[w]$ with $w=\frac{1+\sqrt{-19}}{2}$. When $g=2$, the algorithm developed in Section~\ref{sec:hermitianlattices} shows that there is only one  indecomposable unimodular positive definite $R$-lattice of rank 2, namely $R^2$ with the hermitian form  $h=\begin{bmatrix} 2 & -\bar{w} \\ -w & 3 \end{bmatrix}$ (this can alternatively be seen from Schiemann's tables \cite{SchiemannTables}). Hence $A=\F(R^2)=E^2$ with the polarization $\LL$ induced by $h$ is the only Jacobian inside the isogeny class of $E^2$. Using Algorithm~\ref{algo:kerneliso} one can check that there is a polarized isogeny $f$ from $A_0=E^2$ with the product polarization to $(A,\LL)$ with kernel $K \subset A[\ell]$ with $\ell=3$. Explicitly $K$ is generated by the two affine points of $E^2$
 $$ ((51 a^3 + 39 a^2 + 36 a + 13, 59 a^3 + 43 a^2 + 48 a + 35), (3 a^3 + 31 a^2 + 38 a + 4, 44 a^3 + 22 a^2 + 
    19 a + 11)),$$ 
 $$((58 a^3 + 30 a^2 + 23 a + 36, 14 a^3 + 55 a^2 + 47 a + 45), (51 a^3 + 39 a^2 + 36 a + 13, 
2 a^3 + 18 a^2 + 13 a + 26))$$
where $a \in \FF_{61^4}$ has minimal polynomial $x^4+3x^2+40x+2$.
 We can also compute the theta null point which we express in the classical basis of theta constants characteristics. For instance $\theta_{00}^B(0) = \thetacarac{0 0}{0 0}(0)$ is equal to 
 $$43 b^{11} + 34 b^{10} + 28 b^9 + 11 b^8 + 6 b^7 + 19 b^6 + 30 b^5 + 27 b^4 + 27 b^3 + b^2 + 30 b + 59$$
 where $b \in \FF_{61^{12}}$ with minimal polynomial $x^{12} + 2 x^8 + 42 x^7 + 33 x^6 + 8 x^5 + 38 x^4 + 14 x^3 + x^2 + 15 x + 2$.
  Using the reconstruction method explained above, we find $C: y^2 =  45 x^6 + 13 x^5 + 25 x^4 + 23 x^3 + 3 x^2 + 20 x + 13$. 

Consider the complex expression $\chi_{5}(\tau) = \prod_{\epsilon \; \textrm{even}} \vartheta[\epsilon](\tau)$. Then $\chi_{10}=\chi_{5}^2$ is a Siegel modular form of weight $10$ and level $\Gamma_2$ defined over $\ZZ$. Using Algorithm~\ref{algo:modular}, we find that 
$\chi_{10}(A,\LL,w_A) = 22$ where $w_A$ is the basis of differentials constructed in Theorem~\ref{th:finalalgo}. There is a well-known relation with between $\chi_{10}$ and the discriminant of $C : y^2=f(x)$ (which is $2^8$ times the discriminant of $f$) up to the choices of bases of regular differentials. One must have that 
$\chi_{10}(A,\LL,w_A) /(2^{12} \cdot \textrm{Disc}(C))$ is a 10th power of the determinant of the change of bases, hence a 10th power in $\FF_q$. This is indeed the case.
\end{example}

\begin{example}
In a similar way, we can work out an example over $k=\FF_{5^3}$ with a non-maximal order of discriminant $-2^4$. In that case there is a unique defect-$0$ curve of genus $2$ over $k$, namely
$C : y^2 = 3 x^6 + 3 x^4 + 3 x^2 + 3$.
\end{example} 

\begin{example}
Let us consider now the case $k=\FF_{271}$ with a non-maximal order of discriminant $-60$. In that case, there are 9 indecomposable principally polarized abelian surfaces in the isogeny class. For only two of them, there exists an odd $\ell$ ($\ell=5$) %
and one can write down the corresponding curves, namely
$y^2 = 65 x^6 + 167 x^5 + 63 x^4 + 49 x^3 + 
        63 x^2 + 167 x + 65$ and
$y^2 = 89 x^6 + 224 x^5 + 155 x^4 + 16 x^3 + 
        155 x^2 + 224 x + 89$.
 For the seven other cases, such an $\ell$ does not exist: Theorem~\ref{ellid} shows that either there is no orthogonal basis with the same odd norm for two of them, or no orthogonal basis with the same norm for the last $5$ of them.
\end{example}

 \subsection{Curves of genus 3} \label{sec:serreobs}

 In his lectures at Harvard in 1985, Serre found that a principally polarized abelian variety $(A,\pol)$ of dimension $g>2$ defined over a perfect field $k$, which is geometrically a Jacobian, is not necessarily a Jacobian over $k$ (unlike in dimension $1$ or $2$). The obstruction is given by a quadratic character of $\mathrm{Gal}(\bar{k}/k)$ and is called  \emph{Serre's obstruction}. This obstruction is always trivial for hyperelliptic curves.
 When $k \subset \mathbb{C}$ and $g=3$, this character can be computed in terms of the value of the modular form
 defined over $\CC$ by $\chi_{18}(\tau)= -\frac{1}{2^{28}} \cdot \prod_{\epsilon} \vartheta[\epsilon](\tau)$,  where the product is over the $36$ even theta constants (\cite{Ser}, \cite{LR}, \cite{meagher}, \cite{LRZ}). 
 Using lifting techniques, one can  thus get the obstruction  for certain $(A,\pol)$ when $k$ is a finite field of characteristic different from $2$ and therefore address the question of maximal number of points of genus $3$ curves (see for instance \cite{Rit}). However,  the numerical approximations during the computation of the value of the modular form lead to heuristic results only.
 
 The techniques developed in Section~\ref{sec:modular} allows us to directly  work out these computations over an (extension) of the finite field.    In \cite{igusa2}, it is proved that $\chi_{18}$ is a modular form of degree $18$ and level $1$ and therefore it induces an element of $\Gamma(\Agthree(\CC), \Hodge^{18})$. Then \cite[Prop.3.4]{ichikawa}  proved that actually $\chi_{18} \in \Gamma(\Agthree(\ZZ), \Hodge^{18})$.  In \cite[Th.1.3.3]{LRZ}, over a number field, and in Proposition \cite[Prop.2.3]{Rit}, over a field $k$ of characteristic different from $2$, it is proved  for a principally polarized abelian threefold $(A,\pol)/k$ and any choice of $k$-rational basis of regular differentials $w_A$ on $A$, that $\chi_{18}(A,\pol, w_A)$ is a non-zero square in $k$ if and only if $(A,\pol)$ is the Jacobian of a non-hyperelliptic curve of genus $3$ over $k$. Using Algorithm~\ref{algo:modular}, we can compute this value and check whether $(A,\pol)$ is the Jacobian of a non-hyperelliptic genus $3$ curve over $k$ without computing the equation of the curve. Note that as we started with $(A,\pol)/\FF_q$ indecomposable, if $\chi_{18}(A,\pol,w_A)=0$, then $(A,\pol)$ is the Jacobian of a hyperelliptic genus $3$ curve over $\FF_q$.

\begin{example}[A unique defect-$0$ curve without non-trivial automorphism] \label{sec:exg3}
Let consider the question of the existence of defect-$0$ curve of genus $3$ over $\FF_q$ with $q=10313$. If there is such a curve $C/\FF_q$ then $\Jac C \sim E^3$ with  $E$ of trace $-m = -203$. 
The curve $E$ has therefore complex multiplication by the maximal order $\OO=\ZZ[\omega]$ of $\QQ(\omega)$ where $\omega=\frac{1+\sqrt{-43}}{2}$. As $\OO$ has class number $1$, there is a unique (non-polarized) abelian variety in the class of $E^3$ up to isomorphism, namely $E^3$ itself. Moreover using Algorithm~\ref{algo:proj} (see also \cite{Sch}), we find $5$ isomorphism classes of indecomposable positive definite unimodular hermitian $\OO$-lattices $(L,h_i)$ leading to $5$ indecomposable principally polarized abelian threefolds $(E^3,a_i)$. In Table~\ref{tab:resultg3}, we give $h$ by its Gram matrix in the canonical basis of $\OO^3$. For each lattice $(L,h_i)$, we also give the smallest odd $\ell$ determined by Algorithm~\ref{algo:ortho2}. Recall that it determines the degree $\ell^3$ of the isogeny we will compute using the Algorithm~\ref{algo:modular}.
We also display in Table~\ref{tab:resultg3} the order of the automorphism group of $(L,h_i)$, and if $\chi:=\chi_{18}(E^3,a_i,w_{E^3})=0$ or if it is a square in $\FF_q$.

We see that only $a_1$ leads to a non-trivial obstruction and therefore to a non-hyperelliptic defect-$0$ curve. This result agrees with the heuristic result which can be deduced from \cite[Table~2]{Rit}. An equation of $C$ is
 \begin{eqnarray*}
 x^4 &+&  7780 x^3 y + 8862x^3  + 456x^2y^2 + 2118x^2y  + 1846x^2  + 5713xy^3  + 10064xy^2 + 7494xy \\ &+& 6469x + 7559y^4 + 9490y^3 + 7458y^2 + 
       214y + 6746=0.\end{eqnarray*}
 Moreover by Torelli theorem \cite[p.790-792]{matsusaka}, since $\Aut(E^3,a_1)  \simeq  \Aut(L,h_1) \simeq \{\pm 1\}$ and $C$ is non-hyperelliptic, the automorphism group of $C$ is trivial. As far as we know, this is the first example of a finite field for which one can ensure  that the defect-$0$ curves have no extra-automorphism. As recalled in \cite{rit-optimal}, most of the methods developed to find curves of genus $3$ with many points use the existence of extra-automorphisms. The question of existence of a defect-$0$ curve over $\FF_{10313}$ could not have been solved in this way.

{\small
\begin{center}
\begin{table}
   \begin{tabular}{|c| c | c | c| c|  c |c| }
     \hline
 Case &     Gram matrix of $h_i$ & $\ell$ & $\#Aut(L,h_i)$ &  Is $\chi=0$? & Is $\chi$ a square? \\ \hline
   1 &  $\begin{pmatrix}       3 & 1 & 1-\overline{\omega}\\ 
       1   &     4 & 2\\
     1-w    &    2     &   5 \end{pmatrix}$ & $11$ & $2$ & no  & yes \\ \hline
2&    $\begin{pmatrix}     3 & 1 +\overline{\omega}&2-\overline{\omega}\\
     1+w    &    5 & -2-\overline{\omega}\\
     2-w   &  -2-w    &    5 \end{pmatrix}$ & $9$ & $12$ &no & no \\ \hline
 3&     $\begin{pmatrix}            2 & -1 & 1\\
      -1    &    4 & 1-\overline{\omega}\\
       1    &  1-w   &     4  \end{pmatrix}$ & $9$ & $4$  &no & no \\ \hline
   4&   $\begin{pmatrix}        3 & 1 & -1-\overline{\omega}\\
       1    &    3 & -1\\
    -1-w    &   -1     &   5   \end{pmatrix}$& $11$ & $4$ &no & no\\ \hline
  5&     $\begin{pmatrix}               3 & -1 & -1-\overline{\omega}\\
      -1    &    3 & 0\\
    -1-w     &   0       & 5    \end{pmatrix}$ & $11$ & $4$ &no & no \\
     \hline
   \end{tabular}
   \caption{\label{tab:resultg3} Example~\ref{sec:exg3}.}
   \end{table}
 \end{center}
}
\end{example}

\begin{example}
Let $q=131$. As previously, the existence of a defect-$0$ curve of genus $3$ over $\FF_q$ leads to consider indecomposable unimodular positive definite $\OO$-lattices $L_i$ of rank $3$, where $\OO$ has discriminant $-40$. The class number of $\OO$ is $2$ and we find 12 $L_i$, out of which 6 are not free and the largest $\ell$ we have to consider is $19$.  We get $11$  defect-$0$ curves of genus $3$ over $\FF_q$ up to $\FF_q$-isomorphism, for instance
\begin{eqnarray*}
x^4 &+&  72 x^3 y + 111 x^3 z + 55 x^2 y^2 + 99 x^2 y z + 47 x^2 z^2 + 8 x y^3 + 
    95 x y^2 z \\ & +&  74 x y z^2 + 30 x z^3 + 39 y^4 + 53 y^3 z + 58 y^2 z^2 +
    40 y z^3 + 59 z^4 =0
    \end{eqnarray*}
   which has an automorphism group of order $2$.
\end{example}

\begin{example}
Let $q=97$. As previously, the existence of a defect-$0$ curve of genus $3$ over $\FF_q$ leads to consider indecomposable unimodular positive definite $R$-lattices of rank $3$, where $R$ has discriminant $-27$ and therefore is not the maximal order of $\textrm{Frac}(R)$. Our algorithms finds $4$ indecomposable unimodular positive definite $R$-lattices and there is one lattice which is not projective, namely $R^2 \oplus \OO$. This leads to $4$ indecomposable principally polarized abelian threefolds over $\FF_q$ isogenous to $E^3$ where $E/\FF_q : y^2= x^3 + 92 x + 10$. 
For three of them, Serre's obstruction is trivial, so we get exactly three defect-$0$ curves of genus $3$ over $\FF_q$ up to $\FF_q$-isomorphism for instance
\begin{eqnarray*}
 x^4 &+&  63 x^3 y + 28 x^3 z + 10 x^2 y^2 + 81 x^2 y z + 43 x^2 z^2 + 89 x y^3 
        + 10 x y^2 z + 70 x y z^2 + 45 x z^3 \\
        &+& 24 y^4 
        +  55 y^3 z + 77 y^2 z^2 + 
        35 y z^3 + 54 z^4=0
\end{eqnarray*}
with an automorphism group of order $6$. 
\end{example}

 \subsection{Curves of genus 4} \label{sec:g4}
Jacobians of curves of genus $4$ are not dense in the moduli space $\Agfour$. They form a codimension-$1$ variety which we shall characterize thanks to the Igusa modular form $J$ of level $1$ and weight $8$. The modular form  $J$ is  defined over $\CC$ as a homogeneous polynomial of degree $16$ in the theta constants with integer coefficients, see for instance \cite[p.538]{igusa-schottly} or in \cite{igusa-quad} (with the choice of characteristics from \cite{chua}).  It is therefore an element of $\Gamma(\Agfour(\ZZ),\Hodge^8)$ and its values can be computed using Algorithm~\ref{algo:modular}. We will also need the following result below. In \cite{gerritzen}, the first term in the Fourier expansion of $J$ is computed and its constant coefficient is $-2^{16}$. This means that the Siegel modular form $J$ does not vanish identically on $\Agfour \otimes k$ for any algebraically closed field $k$ of characteristic different from $2$. 

Igusa proves that the Igusa modular form is related to the classical Schottky modular form by $$J= \frac{1}{2^6 \cdot 3^2 \cdot 5 \cdot 7} \cdot \left(\left(\sum \vartheta[\epsilon](\tau)^8\right)^2 - 2^{4} \sum \vartheta[\epsilon](\tau)^{16}\right)$$
the sums being over all even characteristics. Hence, over $\CC$, this form is zero precisely on the locus of principally polarized abelian varieties of dimension $4$ which are decomposable or a Jacobian. Following the same lines as \cite[Prop.2.3]{Rit}, this can be extended to any field of characteristic different from $2$.

\begin{theorem} \label{th:igusa}
Let $(A,\pol)$ be an indecomposable principally polarized abelian variety of dimension $4$ over an algebraically closed field $k$ of characteristic different from $2$ and $w_A$ a basis of regular differentials. Then $J(A,\pol,w_A)=0$ if and only if $(A,\pol)$ is the Jacobian of a curve of genus $4$ over $k$.
\end{theorem}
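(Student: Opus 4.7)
The strategy is to extend the classical complex-analytic statement to arbitrary characteristic $\neq 2$, by working over the integral moduli stack $\Agfour \otimes \ZZ[1/2]$, along the lines of \cite[Prop.~2.3]{Rit} which treats the analogous statement for $\chi_{18}$ in dimension $3$. The plan is to identify the vanishing divisor of $J$ with the Schottky locus fiberwise, via a flatness argument.

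Since $J$ is a homogeneous polynomial of degree $16$ in the theta constants with integer coefficients, it defines a global section of $\Hodge^{8}$ on $\Agfour \otimes \ZZ[1/2]$, using Mumford's construction of the universal abelian scheme and the modular interpretation of pairs of theta constants as weight-$1$ forms defined over $\ZZ[1/n]$ (Section~\ref{sec:universal}, Section~\ref{sec:modular}). By the result of \cite{gerritzen} recalled just before the theorem, the Fourier expansion of $J$ has leading coefficient $-2^{16}$, a unit in $\ZZ[1/2]$, so $J$ does not vanish identically on any geometric fiber. Let $\mathcal{J} \hookrightarrow \Agfour$ denote the scheme-theoretic image of the Torelli morphism from the Deligne--Mumford stack $\overline{\mathcal{M}}_4$ of stable genus-$4$ curves. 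This is a closed substack of $\Agfour$, flat and geometrically irreducible of relative dimension $9$ over $\Spec \ZZ[1/2]$; its geometric fiber over any algebraically closed field $k$ of characteristic $\neq 2$ consists of the Jacobians of smooth genus-$4$ curves together with all decomposable principally polarized abelian fourfolds, the latter arising as Jacobians of reducible stable nodal curves.

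Over $\CC$, the classical Igusa--Schottky identity recalled before the statement gives the equality of reduced effective Cartier divisors $V(J)_{\CC} = \mathcal{J} \otimes \CC$ in $\Agfour \otimes \CC$. I would then conclude by a standard flatness argument: since $J$ restricts to zero on the generic fiber of $\mathcal{J}$ and $\mathcal{J}$ is flat over $\ZZ[1/2]$, the section $J$ vanishes identically on the whole substack $\mathcal{J}$, giving the inclusion $\mathcal{J} \subseteq V(J)$ in $\Agfour \otimes \ZZ[1/2]$. Both sides are effective Cartier divisors, flat over $\ZZ[1/2]$ and coinciding in the generic fiber; hence the quotient of their ideal sheaves is a coherent sheaf on $\Agfour \otimes \ZZ[1/2]$, flat over $\ZZ[1/2]$ with vanishing generic fiber, and so is zero. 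Reducing modulo any prime $p \neq 2$ gives $V(J)_k = \mathcal{J}_k$ inside $\Agfour \otimes k$.

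The theorem now follows: since $(A,\pol)$ is assumed indecomposable, its moduli point lies in $\mathcal{J}_k$ if and only if it is the Jacobian of a smooth genus-$4$ curve over $k$. The main obstacle will be to rigorously establish the flatness and the fiberwise description of $\mathcal{J}$ as a substack of $\Agfour \otimes \ZZ[1/2]$, in particular that all decomposable ppavs really appear as limits of Jacobians of stable curves; this reduces to standard but non-trivial properties of $\overline{\mathcal{M}}_4$ and the behavior of the Torelli morphism on the boundary, packaged cleanly on the Deligne--Mumford stack level.
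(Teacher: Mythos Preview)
Your approach is essentially the paper's: both spread the complex statement out to $\ZZ[1/2]$ by arguing that neither the Torelli locus nor the zero locus of $J$ acquires a vertical component. The paper packages this via schematic closures (showing $\overline{\mathcal{T}\otimes\QQ}=\mathcal{T}$ by lifting genus-$4$ curves to characteristic $0$, and $\overline{(J=0)_{red}\otimes\QQ}=(J=0)_{red}$ by primitivity of $J$ together with irreducibility of the fibers of $\Agfour$), whereas you package it as flatness of two Cartier divisors over $\Spec\ZZ[1/2]$. The substance is the same, and the two key inputs---Gerritzen's Fourier coefficient $-2^{16}$ for $J$, and the good reduction of $\overline{\mathcal{M}}_4$---appear in both.

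Two points where your write-up is looser than the paper's. First, your comparison of $V(J)$ and $\mathcal{J}$ as Cartier divisors appeals to their ``coinciding in the generic fiber'', but the Igusa--Schottky identity you cite only gives equality of \emph{supports} over $\CC$; equality as divisors would also require $J$ to vanish to order exactly one along the Jacobian locus, which you do not argue. The paper avoids this entirely by working with $(J=0)_{red}$ throughout---which is all the theorem needs, since the statement is about whether $J$ vanishes, not to what order. Second, the Torelli morphism from the full $\overline{\mathcal{M}}_4$ does not land in $\Agfour$ but in a toroidal compactification; the paper instead takes the image of the stack of curves of \emph{compact type}, whose $k$-points are precisely the Jacobians together with the decomposable principally polarized fourfolds, matching your fiberwise description.
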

\begin{proof}
  Let $\Agfour$ be the moduli stack of principally polarized abelian schemes of relative dimension $4$ and let us denote by $\mathcal{T}$  the Torelli locus (the image of the moduli stack of genus $4$ curves of compact type). Following \cite[p.554]{moonenoort}, it is a reduced and closed substack of $\Agfour$. Moreover for any algebraically closed field $k$, $\mathcal{T}(k)$  coincides with the disjoint union of the set of Jacobians of genus $4$ curves and the set of decomposable principally polarized abelian varieties of dimension $4$ defined over $k$. 

  Over $\CC$,  $\mathcal{T}(\CC) = (J=0)_{red}(\CC)$.
This shows that $\mathcal{T} \otimes \QQ = (J=0)_{red} \otimes \QQ$. Taking  the schematic closure 
 over
$\ZZ[\frac{1}{2}]$ we get $\mathcal{T} \supset \overline{\mathcal{T} \otimes
\QQ} =\overline{(J=0)_{red} \otimes \QQ} \subset (J=0)_{red}$ in $\Agfour$
We need to prove that the two inclusions are equalities, i.e. that none of the loci $\mathcal{T}$ or $(J=0)_{red}$ has a vertical component. For $(J=0)_{red}$ this is the case since the
modular form $J \in \Gamma(\Agfour \otimes
\ZZ[\frac{1}{2}],\Hodge^{\otimes 8})$ is primitive  and the fibers of $\Agfour$ are irreducible (see for instance the proof of \cite[Lemma~3.2, p.~163]{faltingschai}). Similarly, for $\mathcal{T}$, this is true because we can lift any genus $4$ curve in a special fiber to characteristic $0$.

From this we deduce that $J(A,\pol,w_A) =0$ if and only if $(A,\pol) \in \mathcal{T} \otimes k$. Since we have assumed that the polarization $\pol$ is indecomposable, this is the case if and only if $(A,\pol)$ is a Jacobian.
\end{proof}

As we only need to check if the value of $J$ is zero or not, we can work with any affine lift of the theta null point. However, if it is zero and $(A,\pol)$ is therefore a Jacobian over the algebraic closure, there is currently no way to ensure that it is also a Jacobian over the ground field.

\begin{example} \label{ex:g4}
Let us consider the case of defect-$0$ genus $4$ curves $C$ over $\FF_{59}$. The Jacobian of $C$ would be isogenous to $E^4$ with $E$ an elliptic curve with $\End(E)$ of discriminant $-11$. There are three indecomposable principally polarized abelian varieties in the class of $E^4$. We can check (using for the three of them the value $\ell=3$) that for none of them the Igusa form is $0$. Hence there is no defect-$0$ curve of genus $4$ over $\FF_{59}$ as it is confirmed in the manYPoints tables \cite{manypoints} or \cite[Th.1.1]{zaytsev}.
\end{example}

It would be more interesting to look at one unknown entry of these tables, like for instance $q=89$. However in this case the discriminant of the associated elliptic curve is $32$ and our algorithms are not efficient enough to work it out yet.

\printbibliography
\end{document}